\theoremstyle{remark}
\newtheorem{example}{\textbf{Example}}[section]
\numberwithin{equation}{section}
\def\red{\textcolor{red}}
\newcommand\figcaption{\def\@captype{figure}\caption}
\newcommand\tabcaption{\def\@captype{table}\caption}
\def\bq{\begin{equation}}
\def\eq{\end{equation}}
\def\br{\begin{eqnarray}}
\def\er{\end{eqnarray}}
\def\brr{\bq\begin{array}{rlll}}
\def\err{\end{array}\eq}
\def\R{\mathbb{R}}
\def\text#1{\hbox{#1}}
\newtheorem{thm}{Theorem}[section]
\newtheorem{lem}{Lemma}[section]
\newtheorem{rem}{Remark}[section]
\newcommand{\bsub}{\begin{subequations}}
\newcommand{\esub}{\end{subequations}$\!$}
\newcommand{\argmax}{\operatornamewithlimits{argmax}}
\newcommand{\argmin}{\operatornamewithlimits{argmin}}
\title[Efficient and positive schemes for PNP systems]
{Efficient, positive,  and energy stable schemes for multi-D Poisson-Nernst-Planck systems }
\author{ Hailiang Liu and Wumaier Maimaitiyiming}
\address{Iowa State University, Mathematics Department, Ames, IA 50011}
\email{hliu@iastate.edu}\email{wumaierm@iastate.edu} 
\keywords{Poisson-Nernst-Planck equations, semi-implicit discretization, mass conservation, positivity, energy decay, 
steady-state}
\subjclass{35Q92,  35J57, 65N08, 65N12, 82D37.}
\begin{document}

\begin{abstract} In this paper, we design, analyze, and numerically validate positive and energy-dissipating schemes for solving the time-dependent multi-dimensional system of 
Poisson-Nernst-Planck (PNP) equations, which has found much use in the modeling of biological membrane channels and semiconductor devices. The semi-implicit time discretization based on a reformulation of the system gives a well-posed elliptic system, which is shown to preserve solution positivity for arbitrary time steps. The first order (in time) fully-discrete scheme is shown to preserve solution positivity and mass conservation unconditionally, and energy dissipation with only a mild $O(1)$ time step restriction. The scheme is also shown to preserve the steady-states. For the fully second order (in both time and space) scheme with large time steps, solution positivity is restored by a local scaling limiter, which is shown to maintain the spatial accuracy. These schemes are easy to implement. Several three-dimensional numerical examples verify our theoretical findings and demonstrate the accuracy, efficiency, and robustness of the proposed schemes, as well as the fast approach to steady states.
\end{abstract}

\maketitle



\section{Introduction} 
In this paper, we are concerned with efficient and structure-preserving numerical approximations to a multi-dimensional time-dependent system of Poisson-Nernst-Planck (PNP) equations. Such system has been widely used to describe charge transport in diverse applications
 such as biological membrane channels \cite{TT53, CE93, Ei98},  electrochemical systems \cite{JN91, BTA04},
 and semiconductor devices \cite{PRS90,S84}. In the semiconductor modeling, 
 it is often called the Poisson-drift-diffusion system.  

PNP equations consist of Nernst--Planck (NP) equations that describe the drift and diffusion of ion species, 
and the Poisson equation that describes the electrostatic interaction. Such mean field approximation
 of diffusive ions  admits several variants, and we consider the following form 
\begin{subequations}\label{PNP0}
\begin{align}
\label{PNP0dr}
&  \partial_t \rho_i  + \nabla\cdot J_i=  0, \quad x\in \Omega\subset \R^d, \quad t>0, \\
\label{Flx}
& -J_i= D_i(x)\bigg[\nabla \rho_i + \frac{1}{k_BT}\rho_i(q_i\nabla \phi+\nabla \mu_i) \bigg],\\
\label{Ps}
&  -\nabla \cdot(\epsilon(x) \nabla \phi)  = 4\pi\left( f(x)+\sum_{i=1}^{m}q_i\rho_i \right),
\end{align}
\end{subequations}
subject to initial data $\rho_i(x, 0)=\rho_i^{in}(x)\geq 0$ ($i=1, \cdots, m$) and appropriate boundary conditions to be specified in section \ref{sec2.1}.  Here $m$ is the number of species,  $\rho_i=\rho_i(x,t)$ is the charge carrier density for the $i$-th species,  and $\phi=\phi(x,t)$ the electrostatic potential. The charge carrier flux is $J_i$, with which $D_i(x)$ is the diffusion coefficient,  $k_B$ is the Boltzmann constant, $T$ is the absolute temperature. The coupling parameter $q_i=z_ie$, where $z_i$ is the valence (with sign), $e$ is the unit charge. In the Poisson equation, $\epsilon(x)$ is the permittivity, $f(x)$ is the permanent (fixed) charge density  of  the system. The equations are valid in a bounded domain $\Omega$ with boundary $\partial \Omega$ and for time $t\geq  0$.
For more accurate modeling of collective interactions of charged particles, the chemical potential $\mu_i$ is often included and can be modeled by other means (see section \ref{sec2.3} for more details).  

Due to the wide variety of devices modeled by the PNP equations, computer simulation for this system of differential
equations is of great interest.
However, the PNP system is a strongly coupled system of nonlinear equations, also, the PNP system as a gradient flow can take very long time evolution to reach steady states.
Hence, designing efficient and stable methods with comprehensive numerical analysis for the PNP system is highly desirable. This is what we plan to do in this work.
\subsection{Related work}\label{sec1.1}
In the literature, there are different numerical solvers available for solving both steady and time-dependent PNP problems; see, e.g., \cite{SL01,SLL03, HEL11,GNE03, GNE04, MZLS14, ZCW11,LHMZ10}.  Many existing algorithms were introduced to handle specific issues in complex applications, in which one may encounter different numerical obstacles, such as discontinuous coefficients, singular charges, geometric singularities, and nonlinear couplings to accommodate various phenomena exhibited by biological ion channels. 
We refer the interested reader to \cite{WZCX12} for some variational multiscale models on charge transport and 
 related algorithms. 

Solutions to the PNP equations are known to satisfy some important physical properties. It is desirable to maintain these properties at the discrete level, preferably without or with only mild constraints on time step relative to spatial meshes.  Under natural boundary conditions,  three main properties for the PNP equations are known as  (i) Conservation of mass, (ii) Density positivity, and  (iii) Free energy dissipation law.
The first property requires the scheme to be conservative. The second property is point-wise and also important for the third property. In general, it is rather challenging to obtain both unconditional positivity and discrete energy decay simultaneously.  This is evidenced by several recent efforts \cite{LW14, FMEKLL13, HP16, MXL16, FKL17, LW17, GH17}, in which these properties have been partially addressed at the discrete level for PNP equations. With explicit time discretization,  the finite difference scheme in \cite{LW14} preserves solution positivity under a CFL condition $\Delta t=O(\Delta x^2)$  and the energy decay was shown for the semi-discrete scheme (time is continuous). 
An arbitrary high order DG scheme in \cite{LW17} was shown to dissipate the free energy, with solution positivity restored with the aid of a scaling limiter.  With implicit time discretization, the second order finite difference scheme in \cite{ FMEKLL13} preserves positivity under a CFL condition $\Delta t =O(\Delta x^2)$ and a constraint on spatial meshes. An energy-preserving version was further given in \cite{FKL17} with a proven second order energy decay rate. The finite element method in \cite{MXL16} employs the fully implicit backward Euler scheme to obtain solution positivity and the discrete energy decay. 
In some cases, electric energy alone can be shown to decay (see \cite{LW17}). Such decay has been verified for the finite difference scheme in \cite{HP16} and the finite element scheme in \cite{GH17}, both with semi-implicit time discretization. 

More recent attempts have focused on semi-implicit schemes based on a formulation of the nonlogarithmic Landau type.  As a result, all schemes obtained in \cite{LMCICP, LMJCAM, HPY19,DWZ19,HH19} have been shown to feature unconditional positivity ( see further discussion in section \ref{sec1.2}).  

Our goal here is to construct and analyze structure-preserving numerical schemes for  
PNP equations in a more general setting: multi-dimension, multi-species, also subject to other chemical forces. 

 \subsection{Our contributions}\label{sec1.2}  
A key step is to reformulate (\ref{PNP0dr})-(\ref{Flx}) as 
\begin{equation}\label{PNP10}
   \partial_t \rho_i = \nabla\cdot (D_i(x) e^{-\psi_i}\nabla (\rho_i e^{\psi_i})),
\end{equation}
where  
$$
\psi_i(x, t)=\frac{q_i}{k_BT}\phi(x, t)+\frac{1}{k_BT}\mu_i.
$$ 
Such reformulation, called the Slotboom transformation in the semiconductor literature, converts a drift-diffusion operator into a self-adjoint elliptic operator. It can be more efficiently solved,  and in particular more suitable for keeping the positivity-preserving property. In the context of Fokker-Planck equations it is termed as the nonlogarithmic Landau formulation (see, e.g., \cite{BD10, LY12}).  Using such reformulation in \cite{LY12} Liu and Yu constructed an implicit scheme for a singular Fokker-Planck equation and proved that all three solution properties 
hold for arbitrary time steps, for which implicit time-discretization is essential. 
Inspired by \cite{LY12, LW14}, we adopted a semi-implicit discretization of (\ref{PNP10}) in \cite{LMCICP}
to construct a first order in time and second order in space scheme for a reduced PNP system, and proved all three solution properties for the resulting scheme with only a mild $O(1)$ time step restriction. We further introduced a second order (in time) extension in \cite{LMJCAM} again for the reduced PNP system,  and a fully  second order scheme in \cite{LMJCP} for a class of nonlinear nonlocal Fokker-Planck type equations. All schemes in \cite{LMCICP,LMJCAM, LMJCP}  feature unconditional positivity  and a conditional discrete energy dissipation law simultaneously. 
 
This paper improves upon the existing results in \cite{LMCICP, LMJCAM, LMJCP} in the study of (\ref{PNP0}). We first present a semi-implicit time discretization of form 
\begin{equation}\label{PNP11}
\frac{\rho^{n+1}_i-\rho^n_i}{\tau}= \nabla\cdot (D_i(x) e^{-\psi^n_i}\nabla (e^{\psi^n_i}\rho^{n+1}_i))=:R[\rho^{n+1}_i, \psi^n_i],
\end{equation}
which is shown to be well-posed and positivity-preserving for time steps of arbitrary size and independent of the Poisson solver. We further construct the following second order scheme
 \begin{equation}\label{PNP12}
\frac{\rho^{*}_i-\rho^n_i}{\tau/2}=R[\rho^{*}_i, \frac{3}{2}\psi^n_i-\frac{1}{2}\psi^{n-1}_i], \quad \rho^{n+1}_i=2\rho^*_i-\rho^n_i,
\end{equation}
for which solution positivity for large time steps is restored by a positivity-preserving local limiter.  For the spatial discretization we use the 2nd order central difference approximation. 

Before stating the main results, let us mention some viable options in the use of reformulation (\ref{PNP10}), i.e., 
$$
\partial_t \rho_i =R[\rho_i, \psi_i],
$$
which is linear in $\rho_i$ if $\psi_i$ is a priori given.  With the second order 
central difference in spatial discretization, there are several ways to define  $\psi_i$ on cell interfaces (see section \ref{sec3.3}).  For the time discretization,  
solution positivity is readily available if we take 
\begin{equation}\label{PNPs}
\frac{\rho^{n+1}_i-\rho^{n-k+1}_i}{k\tau}=R[\rho^{n+1}_i, \psi^*_i], 
\end{equation}
with a consistent choice for $\psi_i^*$ and integer $k \geq 1$. Different options are introduced in \cite{HPY19,DWZ19,HH19} for obtaining their respective positive schemes. 

It is natural and simple to take $k=1$ and  $\psi^*=\psi^n$ in (\ref{PNPs}), 
that is (\ref{PNP11}) (again with further central difference in space).  But it is subtle to establish a discrete energy dissipation law.  
A fully discrete scheme using (\ref{PNP11}) was studied in  \cite{DWZ19}, where no energy dissipation law was established. 
Nonetheless, a discrete energy dissipation law can be verified with other options.  Indeed, (\ref{PNPs}) with $k=2$ and $\psi^*_i=\psi^n_i$ was considered in \cite{HPY19}, where the authors proved unconditional energy decay for a modified energy.
 In \cite{HH19}, (\ref{PNPs}) with $k=1$ and $\psi_i^*=(\psi^{n+1}_i+\psi^n_i)/2$ was considered, and all three properties are shown to hold simultaneously even for general boundary conditions for  the Poisson equation. 
Obviously these options can bring further computational overheads. 
 
In this work,  we formulate simple finite volume schemes for (\ref{PNP0}) by integrating 
the central difference method for spatial discretization with the semi-implicit time discretization of the reformulation (\ref{PNP10}). We have strived to advance these numerical schemes
by presenting a series of theoretical results. We summarize the main contributions as follows:
\begin{itemize} 
\item  We show that the first order time discretization gives a well-posed elliptic system (\ref{PNP11}) at each time step, and features solution positivity independent of the time steps (Theorem \ref{Semi-WELL}).  Upper bound of numerical solutions for some cases is established as well (Theorem \ref{Semi-BD}). 
\item  For the first order (in time) fully-discrete scheme, beyond the unconditional solution positivity (Theorem \ref{First-Positive}), we further establish 
a discrete energy dissipation law for time steps of size $O(1/M)$, where $M$ is the upper-bound of the numerical solutions (Theorem \ref{First-Energy}). This result sharpens the previous estimates in \cite{LMCICP} for the reduced PNP system.
We also prove that the scheme preserves steady-states, and numerical solutions converge to a steady state as $n\to \infty$ (Theorem \ref{First-Steady}).  
\item We design a fully second order (both in time and space) scheme, and solution positivity is shown for small time steps (Theorem \ref{Second-Positive}). While solution positivity for large time steps is ensured by using a local limiter.  We prove that such limiter does not destroy the 2nd order spatial accuracy (Theorem \ref{Limiter}).
\item Three-dimensional numerical tests are conducted to evaluate the scheme performance and verify our theoretical findings.  The computational cost of the second order scheme is comparable to that of the first order semi-implicit schemes (see section \ref{sec5}). 
\end{itemize}

\subsection{Organization.}\label{sec1.3}
We organize this paper as follows: In Section \ref{sec2}, we present primary problem settings and solution properties, as well as  model variations. 
In Section \ref{sec3}, we formulate a unified finite volume method for the PNP system subject to mixed boundary conditions and establish solution positivity, energy dissipation, mass conservation, and steady-state preserving properties for the case of natural boundary conditions.  Extension to a second order scheme is given in Section \ref{sec4}. In Section \ref{sec5},  we numerically verify good performance of the schemes. Finally in Section \ref{sec6} some concluding remarks are given.

Throughout this paper, we denote $\rho$ as vector $(\rho_1, \cdots, \rho_m)$,   $\partial \Omega$ 
as the boundary of domain $\Omega$ includes both the Dirichlet boundary $\partial \Omega_D$ and the Neumann boundary $\partial \Omega_N$. $|K|$ denotes the volume of domain $K$. We use $g_\alpha$ to denote 
 ${g_{\alpha}}=1/|K_{\alpha}| \int_{K_{\alpha}} g(x)dx$, for an integral average of function $g(x)$ over a cell $K_{\alpha}$. 

\section{Models and related work}\label{sec2}
\subsection{Boundary conditions} \label{sec2.1} 
 Boundary conditions are a critical component of the PNP model 
 and determine important qualitative behavior of the solution. Here we consider the simplest form of boundary conditions of Dirichlet and/or Neumann type \cite{BSW12}.
 
Let $\Omega$ be a bounded domain with Lipschitz boundary $\partial \Omega$. 
The external electrostatic potential $\phi$ is influenced by applied potential, which can be modeled by prescribing a Dirichlet boundary condition
\begin{equation}
\label{DBCP}
 \phi(x, t)=\phi^b(x, t),\quad   x\in \partial\Omega_D.
\end{equation}
For the remaining part of the boundary $\partial \Omega_N=\partial \bar \Omega \setminus \partial \Omega_D,$  a no-flux boundary  condition is applied: 
\begin{equation}
\label{NBCP}
 \epsilon(x) \nabla \phi \cdot \mathbf{n}=0,\quad x\in \partial \Omega_N.
\end{equation}
This boundary condition models surface charges, where $\mathbf{n}$ is the outward unit normal vector on the boundary $\partial \Omega_N.$  Same types of boundary conditions are imposed for $\rho_i$ as 
\begin{align}
\label{RD}
& \rho_i(x, t)=\rho_i^b(x, t)\geq 0, \quad x\in \partial \Omega_D,\\
\label{RN}
& J_i\cdot \mathbf{n}=0, \quad x\in \partial \Omega_N.
\end{align}
In this work we present our schemes by restricting to a rectangular computational domain $\Omega=(0, L_1)\times \cdots \times(0, L_d)$, with $\partial\Omega_D=\{ x\in \bar \Omega | \quad x_1=0, \ x_1=L_1\}$.

We remark that the boundary conditions for the electrostatic potential are not unique and greatly depend on the problem under investigation. For example, one 
may use a non-homogeneous Neumann boundary  condition ($
\nabla \phi \cdot \mathbf{n}=\sigma $ is used in \cite{LW17})
or Robin boundary conditions \cite{FMEKLL13, HH19}. The existence and uniqueness of the solution for the nonlinear PNP boundary value problems have been studied in \cite{JL12, LW05, PJ97} for the 1D case and in \cite{BSW12, JJ85} for multi-dimensions. 

\subsection{Positivity and energy dissipation law}\label{sec2.2}
One important solution property is 
\begin{equation}\label{positivity}
\rho_i(x, t)\geq 0,\quad x\in \Omega, \ t>0.
\end{equation}
Integration of each density equation gives 
$$
\frac{d}{dt} \int_{\Omega} \rho_i(x, t)dx =\int_{\partial \Omega} J_i\cdot \mathbf{n}ds,
$$
which with zero flux $J_i\cdot \mathbf{n}=0$ on the whole boundary leads to the mass conservation:  
\begin{equation}\label{mass}
\int_{\Omega}\rho_i(x, t)dx=\int_{\Omega}\rho_i^{in}(x)dx, \quad  t>0,\quad i=1,\cdots,m.
\end{equation}
We consider the free energy functional $E$ associated to (\ref{PNP0}) with $\mu_i=\mu_i(x)$: 
\begin{equation}\label{energy1}
E=\int_{\Omega}\bigg( \sum_{i=1}^m \rho_i(\log \rho_i-1)+\frac{1}{2k_BT}(f+\sum_{i=1}^mq_i\rho_i)\phi +\frac{1}{k_BT}\sum_{i=1}^m\rho_i\mu_i\bigg )d x.
\end{equation}
In virtue of the Poisson equation (\ref{Ps}), the free energy may be written as 
$$
E=\int_{\Omega}\bigg( \sum_{i=1}^m \rho_i(\log \rho_i-1)+\frac{\epsilon}{8\pi k_BT}|\nabla \phi|^2 +\frac{1}{k_BT}\sum_{i=1}^m\rho_i\mu_i\bigg )d x.
$$
Note that the unscaled  free energy $F=k_BT E$ is also often used, see \cite{LQL18}.   A formal calculation  gives 
\begin{align*}\label{energydiss1}
\frac{dE}{dt}& =
-\int _{\Omega} \sum_{i=1}^m D_i(x)\rho_i|\nabla \psi^*_i|^2 dx +\int_{\partial\Omega } 
\sum_{i=1}^m \psi_i^* J_i\cdot \mathbf{n} ds \\
& \quad +\frac{1}{8\pi k_BT}\int_{\partial\Omega} \epsilon(x) \left[ 
\phi (\partial_n \phi)_t - \partial_n \phi \phi_t \right] ds,
\end{align*}
where
$$
\psi^*_i:=\log \rho_i+\frac{q_i}{k_{B}T}\phi+\frac{1}{k_B T} \mu_i.
$$
Clearly, with $\partial \Omega_D=\emptyset$, we have the following energy dissipation law: 
\begin{equation}
\label{energydiss1}
\frac{dE}{dt} = -\int _{\Omega} \sum_{i=1}^m D_i(x)\rho_i|\nabla \psi^*_i|^2 dx \leq 0.
\end{equation}
Otherwise, the Dirichlet boundary condition needs to be carefully handled (see, e.g.,  \cite{LQL18}).
 
For time dependent chemical potentials $\mu_i(x,t)$,  the total free energy and its dissipation law  needs to be modified depending on how the chemical potential is determined. 

\subsection{Chemical potential}\label{sec2.3} 
In application, the chemical potential $\mu_i$ often includes the ideal chemical potential $\mu^{id}_i(x, t)$  and the excess chemical potential $\mu^{ex}_i(x,t)$ of the charged particles:
$$
\mu_i(x,t)=\mu^{id}_i(x,t)+\mu^{ex}_i(x,t),
$$
with 
$$
\mu^{id}_i(x,t)=-\log[\gamma_i\rho_i(x, t)/\rho^{bulk}_i],
$$
where the activity coefficient $\gamma_i$ described by the extended Debye-H{\"u}ckel theory depends on $\rho$ in nonlinear manner.  Meanwhile,
$$
\mu^{ex}_i(x,t)=\frac{\delta F^{ex}(\rho(x, t))}{\delta \rho_i(x, t)}
$$
is the $L^2$ variational derivative of the excess chemical functional $F^{ex}$, which may include 
hard-sphere components, short-range interactions, Coulomb interactions and electrostatic correlations, 
where the expression of each component can be found in \cite{SRZL11, MZLS14}. 

We remark that the steric interactions between ions of different species are important in the modeling of ion channels \cite{Li09, HEL11}.  Such effects can be described by choosing 
$$
F^{\rm ex}=\frac{1}{2}\int_{\Omega}\omega_{ij}\rho_i\rho_j, 
$$ 
where $\omega_{ij}$ are the second-order virial coefficients for hard spheres, depending on the size of $i$-th and $j$-th ion species \cite{ZJD17}. 
With this addition alone, the flux becomes 
$$
 -J_i= D_i(x)\left(\nabla \rho_i + \frac{1}{k_BT}q_i \rho_i \nabla \phi+\ \rho_i \sum_{j=1}^m \omega_{ij} \nabla \rho_j \right).
$$
The PNP system with this modified flux has been studied numerically first in \cite{SWZ18} without cross steric interactions, and then in \cite{DWZ19} with cross interactions. 

Our schemes will be constructed so that  numerical solutions are updated in an explicit-implicit manner while $\mu_i$ needs only to be evaluated off-line. For simplicity, we shall present our schemes assuming $\mu_i$ is given while keeping in mind that it can be applied to complex chemical potentials without difficulty. 

 \subsection{Steady states} \label{sec2.4}  By the free energy dissipation law (\ref{energydiss1}), the solution 
to (\ref{PNP0}) with zero-flux boundary conditions is expected  to converge to the steady-states as time becomes large. In such case the steady states formally satisfy (\ref{PNP0}) with $\partial_t \rho_i=0$; i.e., 
$$
\nabla\cdot ( D_i(x) \rho_i \nabla \psi_i^*)) = \nabla\cdot J_i =0, \quad  J_i\cdot \mathbf{n}=0, \quad x\in \partial \Omega.
$$
This yields $\int_{\Omega} J_i\cdot \nabla \psi_i^*dx=0$, which ensures that $\psi_i^*$ must be a constant. This gives the well-known Boltzmann distribution
\begin{equation}\label{ST1}
\rho_i=c_ie^{-\frac{1}{k_BT}(q_i\phi+\mu_i)},
\end{equation}
where $c_i$ is any constant.  Such constant can be uniquely determined by the initial data in the PNP system (\ref{PNP0}) if such steady-state is approached by the solution at large times.   Indeed, mass conservation simply gives     
\begin{equation}\label{ST2}
 c_i=\frac{\int_\Omega \rho^{in}_i dx}{\int_\Omega e^{-\frac{1}{k_BT}(q_i\phi+\mu_i)} dx}.
\end{equation}
This allows us to obtain a closed Poisson-Boltzmann equation (PBE) of form  
\begin{equation}\label{Pss}
-\nabla \cdot(\epsilon(x) \nabla \phi)  = 4\pi\left( f(x)+ \sum_{i=1}^{m}q_i c_ie^{-\frac{1}{k_BT}(q_i\phi+\mu_i)} \right), \quad \partial_{\mathbf{n}} \phi|_{\partial \Omega} =0.
\end{equation}
We should point out that the numerical method presented in this paper may be used as an iterative algorithm to 
numerically compute the nonlocal PBE (\ref{Pss}); hence it serves as a simpler alternative to the iterative DG methods recently developed  in \cite{YHL14,YHL18}.



In practical applications, one may describe ions of less interest using the Boltzmann distribution and still solve the NP equations for the target ions so to reduce the computational cost, see \cite{ZW11} for further details on related  models. Our numerical method thus provides an alternative path to simulate  such models.

\section{Numerical method}\label{sec3}
In this section we will construct positive and energy stable schemes.
\subsection{Reformulation}\label{sec3.1}
 By setting  
$$
\psi_i(x, t)=\frac{1}{k_BT}( q_i \phi(x, t)+\mu_i),
$$ 
we reformulate the density equation (\ref{PNP0dr})-(\ref{Flx}) as:
\begin{equation}\label{PNP1}
   \partial_t \rho_i = \nabla\cdot (D_i(x) e^{-\psi_i}\nabla (e^{\psi_i} \rho_i )).
\end{equation}
In spite of the aforementioned advantages of such reformulation, possible large variation of the transformed diffusion coefficients 
could result in large condition number of the stiffness matrix \cite{LHMZ10}. This issue has been recently investigated in \cite{QWZ19, DWZ19}. 

\subsection{Time discretization}\label{sec3.2}
 Let $\tau >0$ be a time step, and $t_n=\tau n$, $n=0,1\cdots,$ be the corresponding  temporal grids. 
We initialize by taking $\rho^0(x)=\rho^{in}(x)$, and obtaining $\phi^0$ by solving the Poisson equation (\ref{Ps}) using $\rho^0(x)$. 

Let $\rho^n$ and $\phi^n$ be numerical approximations of $\rho(x, t_n)$ and $\phi(x, t_n)$,  respectively, 
we first obtain 
$\rho^{n+1} $ by solving the following elliptic system:
\begin{subequations}\label{elliptic1}
\begin{align}
\label{PNPt1}
& \frac{\rho^{n+1}_i-\rho^n_i}{\tau}= \nabla\cdot (D_i(x) e^{-\psi^n_i}\nabla (e^{\psi^n_i}\rho^{n+1}_i))=:R[\rho^{n+1}_i, \psi^n_i],\\
\label{EL12}
& \rho^{n+1}_i=\rho^b_i(x,t_{n+1}) , \quad x\in \partial \Omega_D,\\
\label{EL13}
& \nabla (e^{\psi^n_i}\rho^{n+1}_i )\cdot \mathbf{n}=0, \quad x\in \partial \Omega_N,
\end{align}
\end{subequations}
where 
$$
\psi_i^n=\frac{1}{k_BT}( q_i \phi^n+\mu_i).
$$
Using this obtained $\rho^{n+1}$, we update to obtain $\phi^{n+1}$ from solving 
\begin{subequations}\label{EE2}
\begin{align}\label{Pst1}
& -\nabla \cdot(\epsilon(x) \nabla \phi^{n+1})  = 4\pi\left( f(x)+\sum_{i=1}^{m}q_i\rho^{n+1}_i \right),\\
& \phi^{n+1}(x)=\phi^b(x,t_{n+1}) , \quad x\in \partial \Omega_D,\\
\label{NFbd}
&   \nabla \phi^{n+1}\cdot \mathbf{n}=0, \quad x\in \partial \Omega_N.
\end{align}
\end{subequations}
This scheme is well-defined for any $\tau>0$ with $\rho^n \geq 0$ for all $n \in \mathbb{N}$. More precisely, we have 
\begin{thm}\label{Semi-WELL} Assume $D_i(x) \geq D_0>0$ and $\epsilon(x) \geq \epsilon_0>0$, and $\mu_i(x) \in C(\bar \Omega)$.  Then for 
given $(\rho^n, \phi^n)\in C(\bar \Omega)\cap C^2(\Omega)$, there exists a unique solution $(\rho^{n+1}, \phi^{n+1})\in C(\bar \Omega)\cap C^2(\Omega) $.  If $\rho^n\geq 0$ and $\rho^b(x,t) \geq 0$, $x\in \partial\Omega_D$, then $\rho^{n+1}\geq 0$ for $x\in \Omega$. 
\end{thm}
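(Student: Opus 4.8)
The plan is to exploit the Slotboom substitution underlying the reformulation (\ref{PNP1}). Since $\phi^n$ is given, the coefficient $\psi^n_i=(q_i\phi^n+\mu_i)/(k_BT)$ is a fixed continuous function on $\bar\Omega$, so the density update (\ref{elliptic1}) decouples into $m$ independent linear scalar problems, and the Poisson update (\ref{EE2}) becomes a standard linear elliptic problem once $\rho^{n+1}$ is known. It therefore suffices to analyze a single equation (\ref{PNPt1}). First I would introduce the transformed unknown $u_i:=e^{\psi^n_i}\rho^{n+1}_i$, under which (\ref{PNPt1}) becomes the self-adjoint elliptic equation
\[
-\tau\,\nabla\cdot\big(D_i(x)e^{-\psi^n_i}\nabla u_i\big)+e^{-\psi^n_i}u_i=\rho^n_i\quad\text{in }\Omega,
\]
with $u_i=e^{\psi^n_i}\rho^b_i$ on $\partial\Omega_D$ and, by (\ref{EL13}), $\nabla u_i\cdot\mathbf{n}=0$ on $\partial\Omega_N$. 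Writing $a(x):=D_i(x)e^{-\psi^n_i}$ and $c(x):=e^{-\psi^n_i}$, both are bounded above and below by positive constants on $\bar\Omega$, because $D_i\geq D_0>0$ and $\psi^n_i$ is continuous on the compact set $\bar\Omega$.

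For existence and uniqueness I would subtract an $H^1$ lift of the Dirichlet data and apply Lax--Milgram to the bilinear form $B(u,v)=\tau\int_\Omega a\,\nabla u\cdot\nabla v+\int_\Omega c\,uv$ on the subspace of $H^1(\Omega)$ vanishing on $\partial\Omega_D$. The strictly positive zeroth-order term $c$ supplies coercivity directly, with no Poincar\'e inequality needed, so the argument also covers the pure-Neumann case $\partial\Omega_D=\emptyset$. This yields a unique weak $u_i$, hence a unique $\rho^{n+1}_i=e^{-\psi^n_i}u_i$; feeding $\rho^{n+1}$ into (\ref{EE2}) and repeating the same argument with $\epsilon\geq\epsilon_0>0$ produces a unique $\phi^{n+1}$. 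The asserted regularity $C(\bar\Omega)\cap C^2(\Omega)$ then follows from standard elliptic theory: interior Schauder (or $W^{2,p}$) estimates give $C^2$ smoothness inside $\Omega$, while continuity of the data $\rho^n$, $\rho^b$, $\phi^b$ gives continuity up to $\bar\Omega$.

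The positivity is where the self-adjoint form pays off. Since $\rho^b_i\geq 0$ we have $u_i\geq 0$ on $\partial\Omega_D$, so the negative part $u_i^-:=\max\{-u_i,0\}$ vanishes on $\partial\Omega_D$ and is an admissible test function. Testing the weak form against $-u_i^-$ and integrating by parts (the Neumann flux contributes nothing) gives
\[
\tau\int_\Omega a\,|\nabla u_i^-|^2\,dx+\int_\Omega c\,(u_i^-)^2\,dx=-\int_\Omega \rho^n_i\,u_i^-\,dx\leq 0,
\]
using $\rho^n_i\geq 0$ and $u_i^-\geq 0$ on the right. As $a,c>0$, the left side is nonnegative, forcing $\int_\Omega c\,(u_i^-)^2=0$, hence $u_i^-\equiv 0$ and $u_i\geq 0$. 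Therefore $\rho^{n+1}_i=e^{-\psi^n_i}u_i\geq 0$ in $\Omega$ for \emph{arbitrary} $\tau>0$, exactly as claimed. Once the $C^2$ regularity is in hand one may alternatively invoke the classical weak maximum principle for the operator, whose positive zeroth-order coefficient rules out an interior negative minimum.

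The hard part will not be existence, uniqueness, or positivity — all of these become essentially automatic once the Slotboom substitution exposes the coercive, positivity-preserving self-adjoint structure. The only genuinely delicate point is the regularity bootstrap to classical $C^2(\Omega)$ solutions, since that step needs the coefficients (equivalently $\psi^n_i$, hence $D_i$, $\epsilon$, $\mu_i$) to be H\"older continuous, whereas the hypothesis only assumes $\mu_i\in C(\bar\Omega)$; with merely continuous coefficients one obtains $C^{1,\alpha}$ interior regularity, so I would read the $C^2$ assertion under the natural additional smoothness of the model data, or else state the result at the level of weak solutions with the maximum-principle argument above supplying positivity independently of the $C^2$ claim.
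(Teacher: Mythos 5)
Your treatment of the density update coincides with the paper's own proof: the same transformed unknown $w=e^{\psi^n_i}\rho^{n+1}_i$, the same bilinear form $B[u,v]=\tau\int_\Omega D_i e^{-\psi^n_i}\nabla u\cdot\nabla v\,dx+\int_\Omega e^{-\psi^n_i}uv\,dx$ on the subspace of $H^1(\Omega)$ vanishing on $\partial\Omega_D$, a Dirichlet lift, and Lax--Milgram, with coercivity supplied by the zeroth-order term. Where you genuinely diverge is positivity. The paper argues pointwise on the classical solution: at an interior minimum of $w$ it uses $\nabla w(x^*)=0$ and $\Delta w(x^*)\ge 0$ in the strong equation, and at a minimum on $\partial\Omega_N$ it invokes the Hopf boundary-point lemma \cite{PW67} to get $\nabla w(x^*)\cdot\mathbf{n}<0$, contradicting the no-flux condition. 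You instead test the weak formulation with $-u_i^-$ (a Stampacchia truncation argument) and conclude $u_i^-\equiv 0$ from coercivity; the sign bookkeeping in your display is correct, and $u_i^-$ is admissible precisely because $\rho^b_i\ge 0$ on $\partial\Omega_D$. Your route is more robust: it operates entirely at the $H^1$ level and so does not presuppose the $C^2(\Omega)$ regularity on which the paper's minimum-principle argument rests --- a real advantage given that, as you correctly observe, merely continuous coefficients do not yield classical $C^2$ solutions by Schauder theory, so the paper's positivity proof implicitly leans on extra smoothness of the data. Your caveat about reading the $C^2$ assertion under additional H\"older regularity is a fair reading of what the paper itself glosses over with ``classical elliptic regularity.''

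The one genuine gap is your treatment of the Poisson update (\ref{EE2}). You claim that ``repeating the same argument'' with $\epsilon\ge\epsilon_0>0$ produces a unique $\phi^{n+1}$, but the mechanism you used for coercivity --- the strictly positive zeroth-order term --- is absent from the Poisson equation. When $\partial\Omega_D\ne\emptyset$, coercivity must instead come from the Poincar\'{e}--Friedrichs inequality, which you never invoke; and when $\partial\Omega_D=\emptyset$ (the case relevant to all of the paper's energy-dissipation and steady-state theorems), the statement as you wrote it is false: $\phi^{n+1}$ is unique only up to an additive constant, and existence requires the compatibility condition $\int_\Omega \bigl(f(x)+\sum_{i=1}^m q_i\rho^{n+1}_i\bigr)dx=0$, which the paper secures via mass conservation from the initial data and then restores uniqueness by working on the mean-zero subspace $H_*$ with the Poincar\'{e}--Wirtinger inequality. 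This is exactly the portion of the appendix proof that your proposal skips, and it needs to be supplied for the theorem's conclusion about $\phi^{n+1}$ to hold as stated.
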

The proof is deferred to the appendix A.

In some cases density for the PNP problem is known to be uniformly bounded for all time. We shall show this bound property also for the semi-discrete scheme (\ref{elliptic1}). 
\begin{thm}\label{Semi-BD} Let $ 0\leq \rho_i^{in}(x)\leq B_i$, $0\leq \rho^b_i(x,t)\leq B^b_i,$
$D_i(x)/\epsilon(x)=\sigma_i$ be constants, $\Omega$ be $C^1$ convex domain, all $q_i$ have the same sign, and $\mu_i$ is smooth with $(\nabla\mu_i)\cdot\mathbf{n}\geq 0$ on $\partial\Omega_N$. 
If 
$$\tau <\frac{1}{Q_{i,max}},$$
  then $\rho^n$ obtained by scheme (\ref{elliptic1}) is uniformly bounded, i.e.,
\begin{equation}\label{result}
\rho^n_i(x) \leq \max \left\{B^b_i, \quad B_i, \quad  \frac{Q_{i,max}}{\gamma_i} \right\},
\end{equation}
where $Q_{i,max}=\max_{x\in\bar \Omega} Q_i(x)$ with 
$$
Q_i(x)=\frac{1}{k_BT} \left[ \nabla \cdot\left(D_i(x)\nabla \mu_i \right)-4\pi q_i\sigma_i f(x)   \right], 
\quad \gamma_i=\frac{4\pi q_i^2\sigma_i}{k_BT}.
$$
\end{thm}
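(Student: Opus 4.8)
The plan is to run a pointwise maximum principle on the elliptic problem (\ref{elliptic1}) solved by $\rho^{n+1}_i$, reduce the inequality at the maximizer to a scalar recursion in $n$, and iterate. Throughout I abbreviate $u=\rho^{n+1}_i$ and $\psi=\psi^n_i$, and I use that $\rho^n_i\ge 0$ and $u\ge 0$ by the positivity of Theorem \ref{Semi-WELL}, applied inductively starting from $\rho^0=\rho^{in}\ge 0$.

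First I would expand the flux operator at a point where the gradient of $u$ vanishes. Writing $R[u,\psi]=\nabla\cdot(D_i\nabla u)+\nabla u\cdot(D_i\nabla\psi)+u\,\nabla\cdot(D_i\nabla\psi)$, the decisive quantity is the zeroth-order coefficient $\nabla\cdot(D_i\nabla\psi)$. Using $D_i=\sigma_i\epsilon$ together with the Poisson equation (\ref{Ps}) for $\phi^n$, a direct computation gives
$$
\nabla\cdot(D_i\nabla\psi^n_i)=Q_i(x)-\frac{4\pi q_i\sigma_i}{k_BT}\sum_{j=1}^m q_j\rho^n_j .
$$
The same-sign hypothesis on the $q_j$ is precisely what makes $q_i\sum_j q_j\rho^n_j\ge q_i^2\rho^n_i\ge 0$, and since $\sigma_i=D_i/\epsilon>0$ this yields the one-sided bound $\nabla\cdot(D_i\nabla\psi^n_i)\le Q_i(x)-\gamma_i\rho^n_i\le Q_{i,max}-\gamma_i\rho^n_i$.

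Next I would evaluate (\ref{PNPt1}) at a point $x_0\in\bar\Omega$ attaining $M^{n+1}:=\max_{\bar\Omega}\rho^{n+1}_i$, splitting into three cases. If $x_0\in\partial\Omega_D$, then $M^{n+1}=\rho^b_i\le B^b_i$ and nothing more is needed. If $x_0$ is interior, then $\nabla u(x_0)=0$ and $\nabla\cdot(D_i\nabla u)(x_0)=D_i(x_0)\Delta u(x_0)\le 0$. If $x_0\in\partial\Omega_N$, I would first argue $\nabla u(x_0)=0$: the tangential gradient vanishes because $x_0$ is a boundary maximum, while the no-flux condition (\ref{EL13}) gives $\partial_n u(x_0)=-u(x_0)\,\partial_n\psi(x_0)$; since $\phi^n$ carries the homogeneous Neumann condition (\ref{NFbd}) we have $\partial_n\psi(x_0)=\tfrac{1}{k_BT}\nabla\mu_i\cdot\mathbf n\ge 0$, so $\partial_n u(x_0)\le 0$, and this combined with the elementary inequality $\partial_n u(x_0)\ge 0$ at a boundary maximum forces $\partial_n u(x_0)=0$. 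With $\nabla u(x_0)=0$ secured, the $C^1$-convexity of $\Omega$ is what lets me conclude $\nabla\cdot(D_i\nabla u)(x_0)=D_i(x_0)\Delta u(x_0)\le 0$, by testing $u$ both along the boundary (tangential Hessian $\le 0$) and along the inward normal ray (giving the normal second derivative $\le 0$).

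Finally, in all non-Dirichlet cases I drop the nonpositive diffusion term and the term $\nabla u\cdot(D_i\nabla\psi)$ (which vanishes since $\nabla u(x_0)=0$), so with $a:=M^{n+1}$ and $b:=\rho^n_i(x_0)\in[0,M^n]$ the equation collapses to
$$
\frac{a-b}{\tau}\le a\,(Q_{i,max}-\gamma_i b).
$$
If $Q_{i,max}-\gamma_i b\le 0$ this gives $a\le b\le M^n$; otherwise, since $\tau Q_{i,max}<1$ the factor $1-\tau Q_{i,max}+\tau\gamma_i b$ is positive, so $a\le b/(1-\tau Q_{i,max}+\tau\gamma_i b)$, whose right-hand side is increasing in $b$ and hence bounded by $Q_{i,max}/\gamma_i$ because $b<Q_{i,max}/\gamma_i$. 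Either way $M^{n+1}\le\max\{M^n,\,Q_{i,max}/\gamma_i\}$, and including the Dirichlet case gives $M^{n+1}\le\max\{B^b_i,\,M^n,\,Q_{i,max}/\gamma_i\}$. Since $M^0=\max_{\bar\Omega}\rho^{in}_i\le B_i$, induction on $n$ then yields (\ref{result}). The main obstacle is the Neumann boundary maximum: establishing that the second-order (diffusion) term is nonpositive at $x_0\in\partial\Omega_N$ is exactly where the $C^1$-convexity of $\Omega$ and the sign condition $\nabla\mu_i\cdot\mathbf n\ge 0$ (through $\partial_n u(x_0)=0$) are indispensable, and it is the only step not reducible to a routine interior maximum-principle computation.
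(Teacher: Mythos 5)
Your proof follows the paper's skeleton almost exactly: the non-divergence rewrite of $R[u,\psi]$, the bound $\nabla\cdot(D_i\nabla\psi^n_i)\le Q_{i,\max}-\gamma_i\rho^n_i$ obtained from the Poisson equation for $\phi^n$ together with the same-sign hypothesis, the three-way case analysis at the maximizer, and the closing monotone recursion $M^{n+1}\le\max\{B^b_i,\,M^n,\,Q_{i,\max}/\gamma_i\}$ with induction are all the paper's argument (the paper cites the semi-discrete Poisson equation (\ref{Pst1}) rather than (\ref{Ps}), but that is cosmetic). The genuine gap is in the case $x_0\in\partial\Omega_N$. There you evaluate the elliptic equation (\ref{PNPt1}) \emph{at} the boundary point and invoke $\Delta u(x_0)\le 0$; both steps presuppose that $u=\rho^{n+1}_i$ has second derivatives at $x_0$ and that the equation extends to $\partial\Omega_N$. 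Theorem \ref{Semi-WELL} supplies only $\rho^{n+1}_i\in C(\bar\Omega)\cap C^2(\Omega)$ (plus, implicitly, the first-order normal derivative needed for (\ref{EL13}) to make sense), and on a merely $C^1$ domain one cannot upgrade to $C^2$ regularity up to the boundary, so neither the Hessian test nor the pointwise equation at $x_0$ is available. In particular your parenthetical ``tangential Hessian $\le 0$ by testing along the boundary'' is not even well defined when the boundary is only $C^1$. The paper's proof is structured precisely to avoid this: it assumes for contradiction $\rho^{n+1}_i(x^*)>P(\rho^n_{i,\max})$, builds $U=\rho^{n+1}_i-\rho^{n+1}_i(x^*)$ and the operator $L$ with $LU\ge 0$ \emph{in the interior only}, applies the Hopf boundary-point lemma (\cite[Theorem 8]{PW67}) to conclude $(\nabla\rho^{n+1}_i(x^*))\cdot\mathbf{n}>0$, and contradicts the no-flux condition, which forces $(\nabla\rho^{n+1}_i)\cdot\mathbf{n}=-\tfrac{1}{k_BT}\rho^{n+1}_i\,(\nabla\mu_i\cdot\mathbf{n})\le 0$ on $\partial\Omega_N$; only first derivatives at the boundary ever enter.

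Two further remarks. Your observation that the no-flux condition plus boundary maximality forces $\partial_{\mathbf n}u(x_0)=0$ is correct and uses exactly the ingredients ($\nabla\phi^n\cdot\mathbf{n}=0$ from (\ref{NFbd}), $\nabla\mu_i\cdot\mathbf{n}\ge 0$, $u\ge 0$) that the paper deploys in contradiction form; the problem is only what you do next with it. Also, your attribution of the Hessian step to convexity is off: if $u$ \emph{were} $C^2$ at $x_0$, then on any $C^1$ domain every strictly inward direction $w$ is feasible ($x_0+tw\in\Omega$ for small $t>0$), so $w^{T}D^2u(x_0)\,w\le 0$ on an open half-space of directions and hence, by continuity of the quadratic form, on the closed half-space containing all tangential directions and $\pm\mathbf{n}$, giving $\Delta u(x_0)\le 0$ with no convexity at all; conversely, without that regularity the step fails whether or not $\Omega$ is convex. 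So what is missing is regularity, not geometry, and the repair is either to assume enough boundary smoothness to place $\rho^{n+1}_i$ in $C^2$ up to $\partial\Omega_N$ or to switch to the paper's Hopf-lemma contradiction.
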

\begin{rem} In the case of $q_i$ with different sign, density $\rho_i$ in (\ref{PNP0}) may not be bounded.  
\end{rem}
\begin{proof} 
We rewrite the semi-discrete scheme 
$$
\frac{\rho^{n+1}_i-\rho^n_i}{\tau}= \nabla\cdot \left(D_i(x) e^{-\psi^n_i}\nabla \left( \rho^{n+1}_i e^{\psi^n_i} \right) \right)
$$
into
$$
\frac{\rho^{n+1}_i-\rho^n_i}{\tau}=D_i(x)\Delta\rho^{n+1}_i+b_i \cdot \nabla\rho^{n+1}_i+c_i \rho^{n+1}_i ,
$$
where 
$$
b_i=\left( \nabla D_i(x)+D_i(x)\nabla \psi^n_i\right), \quad c_i=\nabla\cdot\left( D_i(x)\nabla\psi^n_i\right).
$$
In virtue of $\psi^n_i=\frac{q_i}{k_BT}\phi^n+\frac{1}{k_BT}\mu_i$ and $D_i(x)/\epsilon(x)=\sigma_i$,
the coefficient $c_i$ can be estimated as
\begin{equation*}\label{BD1}
\begin{aligned}
c_i=&
\frac{1}{k_BT}\left[ \nabla\cdot\left( q_i D_i(x)\nabla \phi^n \right)+\nabla \cdot\left(D_i(x)\nabla \mu_i) \right) \right]\\
=
&\frac{1}{k_BT} 
\left[ q_i\sigma_i   \nabla\cdot\left(\epsilon(x)\nabla \phi^n \right)+\nabla \cdot\left(D_i(x)\nabla \mu_i) \right) \right]\\
\qquad &  (\text{using (\ref{Pst1})})\\
=&\frac{1}{k_BT} \left[ -4\pi q_i\sigma_i  \left( f(x)+\sum_{j=1}^m q_j \rho^n_j \right)+\nabla \cdot\left(D_i(x)\nabla \mu_i) \right) \right]\\
\qquad & (\text{using $q_iq_j>0$ and $\rho_j^n \geq 0$})\\ 
\leq & \frac{1}{k_BT} \left[\nabla \cdot\left(D_i(x)\nabla \mu_i \right)  -4\pi q_i\sigma_i f(x) -4\pi q_i^2\sigma_i  \rho^n_i  \right]\\
=& Q_i(x)-\gamma_i \rho^n_i.
\end{aligned}
\end{equation*}
Hence 
\begin{equation}\label{rhoe}
\frac{\rho^{n+1}_i-\rho^n_i}{\tau}\leq D_i(x)\Delta\rho^{n+1}_i+b_i\cdot \nabla\rho^{n+1}_i+\rho^{n+1}_i \left( Q_{i,max}-\gamma_i\rho^n_i\right).
\end{equation} 
 We proceed to distinct  three cases, by letting  $x^*=\argmax_{x\in \bar\Omega}\rho^{n+1}_i(x)$:  
 
(i) If $x^*\in \partial \Omega_D$ we have
$$
\rho^{n+1}_i(x^*)=\rho^b_i(x^*, t_{n+1}) \leq B^b_i.
$$

(ii) If $x^*\in \Omega$, then (\ref{rhoe}) can be reduced to 
$$
\frac{\rho^{n+1}_i(x^*)-\rho^n_i(x^*) }{\tau}\leq \rho^{n+1}_i(x^*)\left( Q_{i,max}-\gamma_i\rho^n_i(x^*)\right).
$$
This using notation $\rho^n_{i,max}=\max_{x\in \bar\Omega}\rho^n_i$ yields 
\begin{equation}\label{BD4}
\rho^{n+1}_i(x) \leq \rho^{n+1}_i(x^*) \leq \frac{\rho^n_{i,max}}{1-\tau Q_{i,max}+\tau\gamma_i\rho^n_{i,max}}=:
P(\rho^n_{i,max}), 
\end{equation}
where we used the fact that $P(\cdot): \mathbb{R}^+ \to \mathbb{R}^+$ is non-decreasing.

(iii) If $x^*\in \partial \Omega_N$, we must have $\rho^{n+1}_i(x^*)\leq P(\rho^n_{i,max})$. 
Otherwise assume $\rho^{n+1}_i(x^*)>P(\rho^n_{i,max}).$ 
Set 
$$U(x)=\rho^{n+1}_i(x)-\rho^{n+1}_i(x^*),$$ 
and  introduce the differential operator  
$$
L\xi: =\tau D_i(x)\Delta\xi+\tau b_i \cdot \nabla\xi -(1-\tau Q_{i,max}+\tau \gamma_i \rho^n_i)\xi.
$$
From (\ref{rhoe}) we have 
$$
L \rho_i^{n+1} \geq -\rho_i^n, 
$$
and using (\ref{BD4}) we obtain  
\begin{equation*}\label{BD5}
\begin{aligned}
LU(x)= &  L\rho^{n+1}_i(x)-L\rho^{n+1}_i(x^*)\\
\geq & -\rho^n_{i} + (1-\tau Q_{i,max}+\tau \gamma_i \rho^n_{i})\rho^{n+1}_i(x^*)\\
\geq & -\rho^n_{i} + (1-\tau Q_{i,max}+\tau \gamma_i \rho^n_{i})P(\rho^n_{i,max})\\
\geq &  0.
\end{aligned}
\end{equation*}
Note that $U(x)\leq 0$ on $\partial \Omega$ and $U(x^*)=0$. Apply 
the maximum-principle  \cite[Theorem 8]{PW67} we have 
$$
(\nabla U(x^*))\cdot \mathbf{n} =(\nabla \rho^{n+1}_i(x^*))\cdot \mathbf{n}>0.
$$
On the other hand, from the no-flux boundary condition (\ref{EL13}) and using (\ref{NFbd}), 
we have 
\begin{equation*}\label{BD6}
\begin{aligned}
0=& \left(  \nabla \left(\rho^{n+1}_ie^{\psi^n_i}\right) \right)\cdot \mathbf{n}\\
= &  \left(e^{\psi^n_i} \nabla \rho^{n+1}_i +\frac{1}{k_BT}e^{\psi^n_i}\rho^{n+1}_i (q_i\nabla\phi^n+\nabla \mu_i)\right)\cdot \mathbf{n}\\
=& e^{\psi^n_i} \left(\nabla \rho^{n+1}_i\cdot \mathbf{n}+\frac{1}{k_BT}  \nabla \mu_i\cdot \mathbf{n})\right) \\
> &  e^{\psi^n_i}\frac{1}{k_BT}\nabla \mu_i\cdot \mathbf{n}  \quad x\in \partial\Omega_N.
\end{aligned}
\end{equation*}
This is a contradiction to the assumption $(\nabla\mu_i)\cdot\mathbf{n}\geq 0$.  Hence for $x\in \Omega\cup \partial\Omega_N\cup \partial \Omega_D=\bar\Omega$, we have
$$
\rho^{n+1}_{i,max}\leq \max \left\{B^b_i, \quad P(\rho^n_{i,max}) \right\}.
$$
Again by the monotonicity of $P(\cdot)$,  we obtain  
\begin{equation*}\label{BD8}
\rho^{n+1}_{i,max}\leq \max \left\{B^b_i, \quad  \max\{\rho^n_{i,max}, \quad \frac{Q_{i,max}}{\gamma_i} \} \right\}.
\end{equation*}
The stated result (\ref{result}) thus follows by induction. 
\end{proof}
A discrete energy dissipation law can be established by precisely quantifying a sufficient bound on the time step. In order to save space, we present a detailed analysis of the energy dissipation property only for the fully discrete scheme in the next section.

\subsection{Spatial discretization}\label{sec3.3}
For given positive integers $N_j$ $(j=1,\cdots,d )$, let $h_j=L_j/N_j$ be the mesh size in $j$-th direction,  $\alpha\in \mathbb{Z}^d$ be the index vector with $\alpha(j)\in \{1,\cdots,N_j\}$,  and $e_j\in \mathbb{Z}^d$ be a vector with $j$-th entry equal to one and all other entries equal to zero. We partition the domain $\Omega$ into computational cells 
$$
K_{\alpha}=[(\alpha(1)-1)h_1, \alpha(1) h_1]\times \cdots \times[(\alpha(d)-1)h_d, \alpha(d) h_d]
$$ 
with cell size $|K_{\alpha}|=\prod_{j=1}^d h_j$ such that 
$
\bigcup_{\alpha \in \mathcal{A}} K_{\alpha}=\Omega, 
$
where $\mathcal{A}$ denotes the set of all indices $\alpha$.

\subsubsection{Density update}
 A finite volume approximation of (\ref{PNPt1}) over each cell $K_{\alpha}$ with $\alpha\in \mathcal{A}$
 gives 
\begin{subequations}\label{fully1C}
\begin{equation}
 \frac{\rho^{n+1}_{i,\alpha} -\rho^n_{i,\alpha} }{\tau} =\sum_{j=1}^d \frac{C_{i,\alpha+e_j/2}-C_{i,\alpha -e_j/2}}{h_j}  = :  R_{\alpha}[\rho^{n+1}_i, \psi^n_i], 
\end{equation}
where 
$
\rho^0_{i,\alpha}: = \rho^{in}_{i, \alpha}.
$ 

Numerical fluxes on interfaces are defined by:

(i) on the interior interfaces,
\begin{align}
\label{Cfully1}
C_{i,\alpha+e_j/2} =\frac{ D_i(x_{ \alpha+e_j/2}) e^{-\psi^n_{i,\alpha+e_j/2}} }{h_j} \left(  \rho^{n+1}_{i,\alpha+e_j} e^{\psi^n_{i,\alpha+e_j}}-   \rho^{n+1}_{i,\alpha} e^{\psi^n_{i, \alpha}}      \right), \quad \text{ for }  1 < \alpha(j) < N_j;
\end{align}

(ii) on the boundary $\partial \Omega_D$, 
 \begin{equation}\label{BC11D}
\begin{aligned}
& C_{i,\alpha-e_1/2} =\frac{2D_i(x_{\alpha-e_1/2})e^{-\psi^b_i(x_{\alpha-e_1/2}, t_n )}}{h_1} \bigg(\frac{\rho^{n+1}_{i,\alpha}}{e^{-\psi^n_{i,\alpha}}} -   \frac{\rho^b_i( x_{\alpha-e_1/2}, t_{n+1})}{ e^{-\psi^b_i(x_{\alpha-e_1/2} ,t_n)}}\bigg) ,\quad \alpha(1)=1,\\
 & C_{i,\alpha+e_{1}/2} =\frac{2D_i(x_{\alpha+e_1/2}) e^{-\psi^b_i(x_{\alpha+e_1/2} , t_n )}}{h_1} \bigg(  \frac{\rho^b_i(x_{\alpha+e_1/2}, t_{n+1})}{  e^{-\psi^b_i(x_{\alpha+e_1/2} , t_n)}}- \frac{\rho^{n+1}_{i,\alpha}}{e^{-\psi^n_{i,\alpha}}}  \bigg) , \quad \alpha(1)=N_1;
 \end{aligned}
\end{equation}

(iii) on the boundary $\partial \Omega_N$,  
 \begin{equation}\label{BC11N}
\begin{aligned}
& C_{i,\alpha-e_j/2}=0,\ \text{ for  } \alpha(j)=1, \\
& C_{i, \alpha+e_j/2}=0,\ \text{ for } \alpha(j)=N_j.
 \end{aligned}
\end{equation}
\end{subequations}
In (\ref{Cfully1}) $e^{-\psi^n_{i,\alpha+e_j/2}}$  needs to be evaluated using numerical solutions $\phi^n_\alpha$.  There are three choices, all are second order approximations:

(i)  the harmonic mean
\begin{equation}\label{HM}
e^{-\psi^n_{i,\alpha+e_i/2}}=\frac{2e^{-\psi^n_{i, \alpha+e_j}-\psi^n_{i, \alpha}}}{e^{-\psi^n_{i,\alpha+e_j}}+e^{-\psi^n_{i,\alpha}}},
\end{equation}

(ii)  the geometric mean
\begin{equation}\label{GM}
e^{-\psi^n_{i,\alpha+e_i/2}}=\sqrt{e^{-\psi^n_{i, \alpha+e_j}-\psi^n_{i, \alpha}}},
\end{equation}

(iii)  the algebraic mean
\begin{equation}\label{AM}
e^{-\psi^n_{i,\alpha+e_i/2}}=\frac{e^{-\psi^n_{i, \alpha+e_j}}+e^{-\psi^n_{i, \alpha}}}{2}.
\end{equation}
 It is reported in \cite{QWZ19} that the harmonic mean results in a linear system with better condition number than that of the geometric mean. We use the harmonic mean in our numerical tests.

\subsubsection{Solving Poisson's equation}
In order to complete the scheme, we need to evaluate $\psi_{i, \alpha}^n$ by 
$$
\psi_{i, \alpha}^n=\frac{1}{k_BT}( q_i \phi_\alpha^n+\mu_{i, \alpha}),
$$
and  $\phi^n_{\alpha}$ is determined from $\rho^n_\alpha$ by using the following discretization of the equation (\ref{Pst1}):
\begin{subequations}\label{fully1Ps}
\begin{equation}
 -\sum_{j=1}^d \frac{\Phi^n_{\alpha+e_j/2}-\Phi^n_{\alpha-e_j/2}}{h_j}=4\pi\left( f_{\alpha}+\sum_{i=1}^{m}q_i\rho^n_{i,\alpha} \right),
\end{equation}
where numerical fluxes on cell interfaces are defined by:

(i) on the interior interfaces, 
\begin{align}
\label{Ps2fully1}
\Phi^n_{\alpha+e_j/2}&=\epsilon( x_{\alpha+e_j/2}) \frac{\phi^n_{\alpha+e_j}-\phi^n_{\alpha}}{h_j},\quad \text{ for } 1<\alpha(j)<N_j,
\end{align}

(ii) on the boundary $\partial \Omega_D$,  
 \begin{equation}\label{BC21D}
\begin{aligned}
\Phi^n_{\alpha-e_1/2}&=\epsilon(x_{\alpha-e_1/2}) \frac{2(\phi^n_{\alpha}-\phi^b( x_{\alpha-e_1/2}, t_n))}{h_1} , \quad \text{ for }  \alpha(1)=1,\\
\Phi^n_{\alpha+e_{1}/2}&=\epsilon(x_{\alpha+e_1/2}) \frac{2(\phi^b( x_{\alpha+e_{1}/2}, t_n)-      \phi^n_{\alpha}      )}{h_1}, \quad \text{ for }  \alpha(1)=N_1,
 \end{aligned}
\end{equation}

(iii) on the boundary $\partial \Omega_N$,   
 \begin{equation}\label{BC21N}
\begin{aligned}
& \Phi^n_{\alpha-e_j/2}=0,\ \text{ for  } \alpha(j)=1, \\
& \Phi^n_{\alpha+e_j/2}=0,\ \text{ for } \alpha(j)=N_j.
 \end{aligned}
\end{equation}
\end{subequations}
Note that in the case of $\partial\Omega_D=\emptyset$, the solution to (\ref{fully1Ps}) is unique only up to an additive constant, in such case we take $\phi^n_{(1, \cdots, 1)}=0$ to obtain a unique solution $\phi^n_{\alpha}$.

\subsubsection{Positivity}
The following theorem states that the scheme (\ref{fully1C}) preserves positivity of numerical solutions without any time step restriction.
\begin{thm} \label{First-Positive}
Let $\rho^{n+1}_{\alpha}$ be obtained from (\ref{fully1C}). If $\rho^n_{\alpha}\geq 0$ for all $\alpha\in \mathcal{A}$, and $\rho^b(x, t_n) \geq 0$, $x\in \partial\Omega_D$, then 
$$
\rho^{n+1}_{\alpha}\geq 0 \quad  \text{for all}  \quad \alpha\in \mathcal{A}.
$$
\end{thm}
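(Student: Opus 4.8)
The plan is to pass to the Slotboom-transformed unknown and then run a discrete minimum principle. For fixed $i$ and $n$ the potential $\psi^n_i$ is already known, so (\ref{fully1C}) is a \emph{linear} system in the interior values $\rho^{n+1}_{i,\alpha}$. I would introduce $u_{i,\alpha}:=\rho^{n+1}_{i,\alpha}e^{\psi^n_{i,\alpha}}$ for every cell and, for the Dirichlet data, $u^b_i:=\rho^b_i e^{\psi^b_i}$, noting $u^b_i\ge 0$ because $\rho^b_i\ge 0$ by hypothesis. With the abbreviation $\kappa_{i,\alpha+e_j/2}:=D_i(x_{\alpha+e_j/2})e^{-\psi^n_{i,\alpha+e_j/2}}/h_j>0$ (and the analogous boundary coefficients carrying the factor $2$ and the data $e^{-\psi^b_i}$), each interior flux (\ref{Cfully1}) reads $\kappa_{i,\alpha+e_j/2}(u_{i,\alpha+e_j}-u_{i,\alpha})$, each Dirichlet flux (\ref{BC11D}) reads $\kappa(u_{i,\alpha}-u^b_i)$ with $\kappa>0$, and each Neumann flux (\ref{BC11N}) vanishes. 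The essential structural point is that $D_i>0$ makes every coefficient $\kappa$ strictly positive and that the transformation replaces the drift term by these symmetric differences.

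Substituting $\rho^{n+1}_{i,\alpha}=u_{i,\alpha}e^{-\psi^n_{i,\alpha}}$ and moving the known $\rho^n$ to the right, the scheme rearranges, cell by cell, into the form
\begin{equation*}
\rho^{n}_{i,\alpha}=u_{i,\alpha}e^{-\psi^n_{i,\alpha}}+\tau\sum_{f\in\mathcal{F}(\alpha)}\kappa_f\,(u_{i,\alpha}-w_f),
\end{equation*}
where $\mathcal{F}(\alpha)$ ranges over the interior and Dirichlet interfaces of $K_\alpha$ (Neumann interfaces drop out), every $\kappa_f>0$, and $w_f$ is either a neighbouring value $u_{i,\beta}$ or a boundary value $u^b_i$. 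Collected over all cells this is a linear system $Mu=F$ whose diagonal entry equals $e^{-\psi^n_{i,\alpha}}+\tau\sum_f\kappa_f>0$, whose off-diagonal entries are $-\tau\kappa_f\le 0$, and whose right-hand side is $F_{i,\alpha}=\rho^n_{i,\alpha}+\tau(\text{terms }\tau\kappa_f u^b_i)\ge 0$ because both $\rho^n$ and $u^b_i$ are nonnegative. The strictly positive surplus $e^{-\psi^n_{i,\alpha}}$ makes $M$ strictly diagonally dominant in every row, so $M$ is invertible (giving unique solvability as a by-product) and is an $M$-matrix with $M^{-1}\ge 0$; hence $u=M^{-1}F\ge 0$.

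If one prefers to avoid $M$-matrix theory, the same displayed identity yields a direct minimum-principle argument, which I would present as the cleaner route: let $\beta$ minimize $u_{i,\alpha}$ over all cells and suppose $u_{i,\beta}<0$. Since the boundary values satisfy $u^b_i\ge 0>u_{i,\beta}$ and every interior neighbour satisfies $u_{i,\text{neighbour}}\ge u_{i,\beta}$ by minimality, each term $u_{i,\beta}-w_f\le 0$; the sum is therefore $\le 0$, and the displayed equation forces $\rho^n_{i,\beta}\le u_{i,\beta}e^{-\psi^n_{i,\beta}}<0$, contradicting $\rho^n\ge 0$. Thus $u\ge 0$ and consequently $\rho^{n+1}_{i,\alpha}=u_{i,\alpha}e^{-\psi^n_{i,\alpha}}\ge 0$. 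I expect the only real obstacle to be bookkeeping rather than ideas: one must carefully track the three interface types so that after transforming to $u$ and transposing $\rho^n$, every boundary contribution lands on the right-hand side with the \emph{correct} (nonnegative) sign, and the $e^{-\psi}$ remainder stays strictly positive to secure the strict inequality that closes the contradiction.
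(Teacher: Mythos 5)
Your proposal is correct and follows essentially the same route as the paper's proof: the paper likewise passes to the Slotboom variable $G_{i,\alpha}=\rho^{n+1}_{i,\alpha}e^{\psi^n_{i,\alpha}}$ (your $u_{i,\alpha}$) and runs a discrete minimum principle at the minimizing cell, treating interior, Dirichlet, and Neumann cells separately; your contradiction phrasing is just the contrapositive of the paper's direct estimate $g^n_{i,\beta}G_{i,\beta}\ge \rho^n_{i,\beta}\ge 0$. The M-matrix packaging you offer first is a correct equivalent alternative (and yields unique solvability as a by-product), but the ``cleaner route'' you settle on coincides with the paper's argument.
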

\begin{proof} This proof mimics that in \cite{LY12} for the Fokker-Planck equation. Set $\lambda_j=\frac{\tau}{h_j^2}$, $\bar{D}_{i, \alpha+e_j/2}=D_i(x_{\alpha+e_j/2})e^{-\psi^n_{i,\alpha+e_j/2}}$, $g^n_{i,\alpha}=e^{\psi^n_{i,\alpha}}$ and 
$$
G_{i,\alpha}=\rho_{i,\alpha}^{n+1}g_{i,\alpha}^n ,  \quad  \alpha\in \mathcal{A}.
$$
Let $\beta$ be such that 
$$
G_{i,\beta}=\min_{\alpha\in \mathcal{A}} G_{i,\alpha},
$$
it suffices to prove $G_{i,\beta}\geq 0$. We discuss in cases:

(i) $K_{\beta}$ is an interior cell.  On the cell $K_{\beta}$ we have
\begin{align*}
g_{i,\beta}^nG_{i,\beta} & =\sum_{j=1}^d \lambda_j[\bar{D}_{i, \beta+e_j/2}(G_{i,\beta+e_j}-G_{i,\beta})-\bar{D}_{i, \beta-e_j/2}(G_{i,\beta}-G_{i,\beta-e_j})]+\rho_{i,\beta}^n \\
& \geq \rho_{i,\beta}^n, 
\end{align*}
where we used the fact $G_{i,\beta}\leq G_{i,\beta\pm e_j}$ and $\bar{D}_{i, \beta\pm e_j/2}> 0$. 
Since $g_{i,\beta}^n> 0,$ so $G_{i,\beta} \geq 0.$

(ii) $K_{\beta}$ is a boundary cell( $K_{\beta} \cap \partial \Omega_D\ne \emptyset$).  We only deal with the case $\beta (1)=1,$ remaining cases are similar.  In such case, 
 \begin{align*}
 g_{i,\beta}^nG_{i,\beta}= & \sum_{j=2}^d \lambda_j[\bar{D}_{i, \beta+e_j/2}(G_{i,\beta+e_j}-G_{i,\beta})-\bar{D}_{i, \beta-e_j/2}(G_{i,\beta}-G_{i,\beta-e_j})]\\
 & +\lambda_1 \bar{D}_{i, \beta+e_1/2}(G_{i,\beta+e_1}-G_{i,\beta})\\
 &-2\lambda_1 D_i(x_{\beta-e_1/2}) g_i^b(x_{\beta-e_1/2}, t_n) \bigg(G_{i,\beta}-   \frac{\rho^b_i(x_{\beta-e_1/2}, t_{n+1} )}{ g_i^b(x_{\beta-e_1/2}, t_n)}\bigg)+\rho_{i,\beta}^n.
 \end{align*}
 Due to $G_{i,\beta}\leq G_{i,\beta\pm e_j}$ and $\bar{D}_{i, \beta\pm e_j/2}\geq 0,$ we have
$$
\bigg( g_{i,\beta}^n+2\lambda_1 D_i(x_{\beta-e_1/2}) g_i^b( x_{\beta-e_1/2}, t_n)   \bigg)G_{i,\beta}\geq  2\lambda_1 D_i(x_{\beta-e_1/2})\rho^b_i(x_{\beta-e_1/2}, t_{n+1})+\rho_{i,\beta}^n\ge 0,
$$
which with $g_{i,\beta}^n+2\lambda_1 D_i(x_{\beta-e_1/2}) g_i^b( x_{\beta-e_1/2}, t_n)  >0$ ensures $G_{i,\beta}\geq 0.$

(iii)$K_{\beta}$ is a boundary cell ($K_{\beta} \cap \partial \Omega_N\ne \emptyset$).  Again we only deal with  
the case $\beta (l)=1$. In such case, 
 \begin{align*}
 g_{i,\beta}^nG_{i,\beta}= & \sum_{j=1, j\neq l}^d \lambda_j[\bar{D}_{i, \beta+e_j/2}(G_{i,\beta+e_j}-G_{i,\beta})-\bar{D}_{i, \beta-e_j/2}(G_{i,\beta}-G_{i,\beta-e_j})]\\
 & +\lambda_l \bar{D}_{i, \beta+1/2e_l}(G_{i,\beta+e_l}-G_{i,\beta})+\rho_{i,\beta}^n \\
 \geq &   \rho_{i,\beta}^n\ge 0. 
 \end{align*}
This also gives $G_{i,\beta}\geq 0.$  The proof is thus complete. 
\end{proof}
\subsubsection{Energy dissipation} If $\partial\Omega_{D}=\emptyset$,
then solutions $\rho^{n+1}_{\alpha}$ obtained by (\ref{fully1C}) are conservative and energy dissipating in addition to the non-negativity. Let a discrete version of the free energy (\ref{energy1}) be defined as
\begin{equation}\label{energy2}
E^n_h=\sum_{\alpha\in \mathcal{A} }|K_{\alpha}| \left[ \sum_{i=1}^m \rho^n_{i, \alpha} (\log \rho^n_{i,\alpha}-1)+\frac{1}{2k_BT}\left(f_{\alpha}+\sum_{i=1}^m q_i\rho^n_{i,\alpha} \right)\phi^n_{\alpha} +\frac{1}{k_BT}\sum_{i=1}^m\rho^n_{i,\alpha}\mu_{i,\alpha}\right ],
\end{equation} 
we have the following result.
\begin{thm}\label{First-Energy} Let $\rho_{\alpha}^n$ be obtained from (\ref{fully1C}) by using either (\ref{HM}), (\ref{GM}), or (\ref{AM}) for $e^{-\psi^n_{i,\alpha+e_i/2}}$. Let $\phi^n_{\alpha}$ be obtained from (\ref{fully1Ps}). If $\partial\Omega_{D}=\emptyset$, then we have:\\
(i) Mass conservation:
$$
\sum_{\alpha\in \mathcal{A} } |K_{\alpha}|  \rho_{i,\alpha}^{n+1}=\sum_{\alpha\in \mathcal{A} } |K_{\alpha}| \rho_{i,\alpha}^{n} \quad \text{ for } n\geq 0, i=1,\cdots, m;
$$
(ii) Energy dissipation: There exists $\tau^*>0$ such that if $\tau\in (0, \tau^*)$, then 
\begin{equation}\label{En}
E^{n+1}_h-E^n_h\leq -\frac{\tau}{2}I^n,
\end{equation}
where
$$
I^n= \sum_{i=1}^m\sum_{j=1}^d \sum_{\alpha(j) \neq N_j} |K_{\alpha}|  \frac{C_{i,\alpha+e_j/2}}{h_j} \left( \log (\rho^{n+1}_{i, \alpha+e_j}e^{\psi^n_{\alpha+e_j}})- \log (\rho^{n+1}_{i, \alpha}e^{\psi^n_{\alpha}})
\right)\geq 0.
$$
If we let 
$$
 \epsilon_{min}=\min_{x\in \bar \Omega} \epsilon(x), \ \epsilon_{max}=\max_{x\in\bar \Omega} \epsilon(x), \  D_{max}=\max_{i, x\in \bar\Omega} D_i(x), 
$$
then $\tau^*$ can be quantified by 
$$
\tau^*=\frac{ k_{B}T \epsilon^2_{min}}{4 \pi   \epsilon_{max}D_{max}\max_{i,\alpha, n} \rho^{n}_{i,\alpha} \sum_{i=1}^mq_i^2}e^{-\max_{i,j,\alpha}|\psi^n_{i,\alpha+e_j} -\psi^n_{i,\alpha}|}.
$$
\end{thm}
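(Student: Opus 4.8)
The mass conservation in (i) is immediate: summing the update (\ref{fully1C}) weighted by $|K_{\alpha}|$ over all $\alpha\in\mathcal{A}$, the interior fluxes $C_{i,\alpha\pm e_j/2}$ cancel in telescoping pairs while the boundary fluxes vanish by (\ref{BC11N}), so $\sum_{\alpha}|K_{\alpha}|\rho^{n+1}_{i,\alpha}=\sum_{\alpha}|K_{\alpha}|\rho^{n}_{i,\alpha}$. For (ii) the plan is to split the discrete free energy (\ref{energy2}) into its entropy, electrostatic, and chemical parts and to bound the increment of each. The entropy density $g(\rho)=\rho(\log\rho-1)$ is convex, so expanding at the new state gives the upper bound $g(\rho^{n+1}_{i,\alpha})-g(\rho^{n}_{i,\alpha})\le \log(\rho^{n+1}_{i,\alpha})(\rho^{n+1}_{i,\alpha}-\rho^{n}_{i,\alpha})$, where positivity of $\rho^{n+1}$ from Theorem \ref{First-Positive} keeps $\log\rho^{n+1}$ finite. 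The electrostatic term is a convex quadratic in $\rho$ through the linear solve (\ref{fully1Ps}); introducing the inner product $\langle u,v\rangle:=\sum_{\alpha}|K_{\alpha}|u_{\alpha}v_{\alpha}$ and the symmetric discrete Dirichlet form $\langle A\cdot,\cdot\rangle$ obtained from (\ref{fully1Ps}) by summation by parts, expanding this quadratic at the \emph{old} state $\rho^n$ yields the exact identity consisting of a linear term $\frac{1}{k_BT}\sum_{\alpha}|K_{\alpha}|\sum_i q_i\phi^n_{\alpha}(\rho^{n+1}_{i,\alpha}-\rho^{n}_{i,\alpha})$ plus a nonnegative remainder $P:=\frac{1}{8\pi k_BT}\langle A\,\delta\phi,\delta\phi\rangle\ge 0$, with $\delta\phi=\phi^{n+1}-\phi^n$. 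The chemical part is linear and contributes $\frac{1}{k_BT}\sum_{\alpha}|K_{\alpha}|\sum_i\mu_{i,\alpha}(\rho^{n+1}_{i,\alpha}-\rho^{n}_{i,\alpha})$ exactly.

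Adding the three linear contributions, the coefficient of $\rho^{n+1}_{i,\alpha}-\rho^{n}_{i,\alpha}$ collapses to $\log(\rho^{n+1}_{i,\alpha}e^{\psi^n_{i,\alpha}})$ since $\frac{q_i}{k_BT}\phi^n_{\alpha}+\frac{1}{k_BT}\mu_{i,\alpha}=\psi^n_{i,\alpha}$. Substituting $\rho^{n+1}_{i,\alpha}-\rho^{n}_{i,\alpha}=\tau R_{\alpha}[\rho^{n+1}_i,\psi^n_i]$ and summing by parts (boundary fluxes vanish, $|K_{\alpha}|$ is constant) turns this linear sum into exactly $-\tau I^n$. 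Writing $G_{i,\alpha}=\rho^{n+1}_{i,\alpha}e^{\psi^n_{i,\alpha}}$, so that $C_{i,\alpha+e_j/2}$ is proportional to $G_{i,\alpha+e_j}-G_{i,\alpha}$ while the log-difference in $I^n$ is $\log G_{i,\alpha+e_j}-\log G_{i,\alpha}$, the monotonicity of the logarithm makes every summand of $I^n$ nonnegative, which proves $I^n\ge 0$. At this point one has $E^{n+1}_h-E^n_h\le -\tau I^n+P$, and it remains to absorb the electrostatic penalty $P$ into $\frac{\tau}{2}I^n$.

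The crux is this final step. The plan is to estimate $\langle A\,\delta\phi,\delta\phi\rangle=4\pi\langle \sum_i q_i\delta\rho_i,\delta\phi\rangle=4\pi\tau\sum_i q_i\langle R[\rho^{n+1}_i,\psi^n_i],\delta\phi\rangle$, to sum by parts, and to apply a Cauchy–Schwarz inequality weighted by $\epsilon$ on each interface so as to factor out $\langle A\,\delta\phi,\delta\phi\rangle^{1/2}$; after cancelling one such factor this gives $\langle A\,\delta\phi,\delta\phi\rangle^{1/2}\le 4\pi\tau\big(\sum_i q_i^2\big)^{1/2}\big(\sum_{i,j,\alpha}|K_{\alpha}|\,|C_{i,\alpha+e_j/2}|^2/\epsilon_{\alpha+e_j/2}\big)^{1/2}$, hence a bound on $P$ that is quadratic in $\tau$ and in the fluxes. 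Then I would compare the flux sum with $I^n$ termwise: the elementary inequality $(x-y)(\log x-\log y)\ge (x-y)^2/\max(x,y)$ bounds each term of $I^n$ from below by $\frac{D_i e^{-\psi^n_{i,\alpha+e_j/2}}}{h_j^2}(G_{i,\alpha+e_j}-G_{i,\alpha})^2/\max(G_{i,\alpha+e_j},G_{i,\alpha})$, while $\rho^{n+1}\le M:=\max_{i,\alpha,n}\rho^{n}_{i,\alpha}$ and the fact that each face value $e^{-\psi^n_{i,\alpha+e_j/2}}$ (harmonic, geometric, or algebraic mean) lies between the two nodal values give $e^{-\psi^n_{i,\alpha+e_j/2}}\max(G_{i,\alpha+e_j},G_{i,\alpha})\le M\,e^{\max|\psi^n_{i,\alpha+e_j}-\psi^n_{i,\alpha}|}$. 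Combining these yields $\sum_{i,j,\alpha}|K_{\alpha}|\,|C_{i,\alpha+e_j/2}|^2/\epsilon_{\alpha+e_j/2}\le \frac{D_{\max}M}{\epsilon_{\min}}e^{\max|\psi^n_{i,\alpha+e_j}-\psi^n_{i,\alpha}|}\,I^n$, so that $P\le C\tau^2 I^n$ with an explicit constant $C$ built from $\pi,k_BT,\epsilon,D_{\max},M,\sum_i q_i^2$ and $e^{\max|\Delta\psi|}$; imposing $C\tau\le \tfrac12$ then produces the threshold $\tau^*$ and the conclusion $E^{n+1}_h-E^n_h\le-\frac{\tau}{2}I^n$.

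The main obstacle is precisely this last chain of estimates: selecting the $\epsilon$-weighting in the Cauchy–Schwarz step and matching it with the $(x-y)(\log x-\log y)$ lower bound for $I^n$ so that the flux sum is dominated by $I^n$ with a clean constant. Everything else—the convexity bounds, the summation-by-parts identities, and the bookkeeping of the various maxima (including the mild loss of a factor $\epsilon_{\min}/\epsilon_{\max}$ that distinguishes the sharp threshold from the conservatively stated $\tau^*$)—is routine once this comparison is in place.
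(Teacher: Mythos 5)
Your proposal is correct and follows essentially the same route as the paper's Appendix~B proof: the same splitting of $E^{n+1}_h-E^n_h$ into $-\tau I^n$ plus a nonnegative electrostatic penalty (via $\log X\le X-1$, the symmetry identity $\sum_\alpha |K_\alpha|S^{n+1}_\alpha\phi^n_\alpha=\sum_\alpha |K_\alpha|S^n_\alpha\phi^{n+1}_\alpha$, and summation by parts), followed by subtracting the discrete Poisson equations at consecutive time levels, Cauchy--Schwarz, and a logarithmic-mean comparison of the flux-squared sum with $I^n$ using $\rho^{n+1}\le M$ and the fact that the interface mean lies between nodal values. Your only deviations are cosmetic refinements --- the $\epsilon$-weighted Cauchy--Schwarz (which, as you note, recovers a slightly sharper threshold than the stated $\tau^*$, hence still proves the theorem), the elementary inequality $(x-y)(\log x-\log y)\ge (x-y)^2/\max(x,y)$ in place of the paper's mean-value-theorem step, and a unified treatment of the three interface means where the paper computes the harmonic case explicitly.
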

\begin{rem} We remark that $\tau^*$ is of size $O(1)$, though it appears to be  dependent on numerical solutions.    
For $h_j$ small, the exponential term is only of size $e^{O(h)}$, therefore bounded. As $n$ increases, the solution $\{ \rho^n_{\alpha} \}$ is expected to converge to the steady-state and therefore bounded from above, hence we simply use  the notation $\max_{i,\alpha,n}\rho^n_{i,\alpha}$. 
The boundedness of $\rho^n$ in $n$ for some cases has been established in Theorem \ref{Semi-BD} for the corresponding semi-discrete scheme. 
\end{rem}
The proof is deferred to Appendix B. 

{\color{black}
\subsubsection{Preservation of steady-states}
With no-flux boundary conditions,  scheme  (\ref{fully1C}) can be shown to be steady-state preserving.  Based on the discussion in section \ref{sec2.4},  
we say a discrete function $\rho_{\alpha}$ is at steady-state if
\begin{equation}
\rho_{i,\alpha} =c_i e^{-\frac{1}{k_BT}(q_i\phi_\alpha+\mu_{i,\alpha})}, 
 \quad i=1,\cdots,m, \quad \alpha\in \mathcal{A},
\end{equation}
where $\phi_\alpha$ satisfies (\ref{fully1Ps}) with $\rho_{i,\alpha}$ replaced by the above relation, which 
is a nonlinear algebraic equation for $\phi_\alpha$ uniquely determined for each $(c_1, \cdots c_m)$.  
We have the following theorem.
\begin{thm}\label{First-Steady}
 Let the assumptions in Theorem \ref{First-Energy} be met, then

(i) If $\rho^{0}_\alpha$ is already at steady-state, then $\rho^n_\alpha=\rho^{0}_\alpha$ for $n\geq 1$.

(ii) If $E^{n+1}_h=E^n_h$, then $\rho^n_\alpha$ must be at steady-state.

(iii) If  $\rho^n_{i,\alpha}, \phi^n_{\alpha}$ converge as $n\to \infty$, then  their limits are determined by   
$$
\rho^\infty_{i,\alpha}=c^\infty_ie^{-\frac{1}{k_BT}(q_i\phi^\infty_\alpha+\mu_{i,\alpha})}, \quad c^\infty_i=\frac{\sum_{\alpha\in \mathcal{A}} |K_\alpha| \rho^{0}_{i,\alpha} }{\sum_{\alpha\in \mathcal{A}} |K_\alpha| e^{-\frac{1}{k_BT}(q_i\phi^\infty_\alpha+\mu_{i,\alpha})} },
$$
where $\phi^\infty_\alpha$ is obtained by solving (\ref{fully1Ps}) by using $\rho^\infty_{i,\alpha}$.
\end{thm}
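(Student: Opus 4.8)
The plan is to exploit the algebraic structure of the interior flux in \eqref{fully1C}: writing $G_{i,\alpha}:=\rho^{n+1}_{i,\alpha}e^{\psi^n_{i,\alpha}}$ and $\bar D_{i,\alpha+e_j/2}=D_i(x_{\alpha+e_j/2})e^{-\psi^n_{i,\alpha+e_j/2}}>0$, we have $C_{i,\alpha+e_j/2}=\frac{\bar D_{i,\alpha+e_j/2}}{h_j}\left(G_{i,\alpha+e_j}-G_{i,\alpha}\right)$, so both the density update and the dissipation functional $I^n$ are governed by whether $G_{i,\cdot}$ is spatially constant. For (i), if $\rho^0$ is at steady-state then $\rho^0_{i,\alpha}e^{\psi^0_{i,\alpha}}\equiv c_i$ is constant in $\alpha$. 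I would verify that $\rho^1=\rho^0$ solves the $n=0$ scheme: plugging $\rho^1_{i,\alpha}=\rho^0_{i,\alpha}$ makes every interior flux $C_{i,\alpha+e_j/2}\propto(c_i-c_i)=0$ vanish, the boundary fluxes vanish by the no-flux condition \eqref{BC11N}, hence $R_\alpha=0$ and the update is consistent. Since the discrete elliptic system for $\rho^{n+1}$ with $\psi^n$ frozen is uniquely solvable (its matrix is a diagonally dominant M-matrix, the discrete analogue of Theorem \ref{Semi-WELL}), this candidate is \emph{the} solution; the Poisson solve \eqref{fully1Ps} then returns $\phi^1=\phi^0$ (same right-hand side, same normalization $\phi_{(1,\dots,1)}=0$), so $\psi^1=\psi^0$ and $\rho^1_{i,\alpha}e^{\psi^1_{i,\alpha}}\equiv c_i$ again, and induction closes the argument.

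For (ii), the key observation is that each summand of $I^n$ equals $|K_\alpha|\frac{\bar D_{i,\alpha+e_j/2}}{h_j^2}\left(G_{i,\alpha+e_j}-G_{i,\alpha}\right)\left(\log G_{i,\alpha+e_j}-\log G_{i,\alpha}\right)\ge 0$, a product of two quantities of equal sign by monotonicity of $\log$ (this also re-confirms $I^n\ge 0$). If $E^{n+1}_h=E^n_h$, Theorem \ref{First-Energy} forces $0\le-\frac{\tau}{2}I^n$, hence $I^n=0$; since each nonnegative summand must vanish and $(a-b)(\log a-\log b)=0$ iff $a=b$, we obtain $G_{i,\alpha+e_j}=G_{i,\alpha}$ across every interior interface. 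By connectivity of the Cartesian grid, $G_{i,\alpha}\equiv c_i$, so all fluxes vanish, $\rho^{n+1}=\rho^n$, and $\rho^n_{i,\alpha}=c_i e^{-\psi^n_{i,\alpha}}=c_i e^{-\frac{1}{k_BT}(q_i\phi^n_\alpha+\mu_{i,\alpha})}$, which is exactly the steady-state condition.

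For (iii), assuming $\rho^n\to\rho^\infty$ and $\phi^n\to\phi^\infty$, I would first note that $E^n_h$ is a continuous function of these bounded convergent sequences, hence $E^n_h$ converges and $E^{n+1}_h-E^n_h\to 0$; the dissipation inequality of Theorem \ref{First-Energy} then yields $I^n\to 0$. Using continuity of the fluxes $C_{i,\alpha+e_j/2}$ and of the $\log$ terms in $(\rho^{n+1},\psi^n)$ — with positivity from Theorem \ref{First-Positive} keeping the arguments of $\log$ positive — one passes to the limit to get $I^\infty=0$, whence as in (ii) the limit $G^\infty_{i,\alpha}:=\rho^\infty_{i,\alpha}e^{\psi^\infty_{i,\alpha}}$ is constant, say $c^\infty_i$, giving $\rho^\infty_{i,\alpha}=c^\infty_i e^{-\frac{1}{k_BT}(q_i\phi^\infty_\alpha+\mu_{i,\alpha})}$. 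Finally, mass conservation (Theorem \ref{First-Energy}(i)) passes to the limit as $\sum_{\alpha}|K_\alpha|\rho^\infty_{i,\alpha}=\sum_{\alpha}|K_\alpha|\rho^0_{i,\alpha}$; substituting the Boltzmann form and solving for $c^\infty_i$ produces the stated quotient, while $\phi^\infty_\alpha$ is the limit of \eqref{fully1Ps}, i.e. the discrete Poisson solution for $\rho^\infty$.

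The main obstacle is part (iii): parts (i) and (ii) reduce to the sign structure above, but in (iii) I must carefully justify the limit $I^n\to 0$ (from convergence of the monotone energy and the dissipation inequality) and, more delicately, the continuity needed to conclude $I^\infty=0$ — in particular guarding against $\rho^\infty_{i,\alpha}$ degenerating to zero, where $\log$ is singular — and confirming that interface-wise constancy of $G^\infty$ upgrades to global constancy via grid connectivity. One should also keep in mind that the pure-Neumann normalization $\phi^n_{(1,\dots,1)}=0$ is what makes each Poisson solve return a genuinely unique potential throughout this theorem.
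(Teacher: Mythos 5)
Your proposal is correct, and for parts (ii) and (iii) it is essentially the paper's own argument: the energy equality (resp.\ convergence of the monotone, bounded-below energy) forces $I^n=0$ (resp.\ $I^n\to 0$) via (\ref{En}), and the sign structure of each summand of $I^n$ --- in the paper's notation $C_{i,\alpha+e_j/2}\propto e^{\psi^*_{i,\alpha+e_j}}-e^{\psi^*_{i,\alpha}}$ paired with $\psi^*_{i,\alpha+e_j}-\psi^*_{i,\alpha}$, which is exactly your $(a-b)(\log a-\log b)\ge 0$ with $a=G_{i,\alpha+e_j}$, $b=G_{i,\alpha}$ --- forces interface-wise, hence by grid connectivity global, constancy of $G_{i,\cdot}$, after which mass conservation fixes $c_i^\infty$. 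Part (i) is where you take a genuinely different route. The paper never invokes solvability of the linear system: it tests the $n=0$ scheme against $|K_\alpha|\rho^1_{i,\alpha}/\rho^0_{i,\alpha}$, sums by parts, substitutes $\rho^0_{i,\alpha}=c_ie^{-\psi^0_{i,\alpha}}$ so that the right-hand side becomes $-\tau c_i\sum(\cdots)\bigl(\rho^1_{i,\alpha+e_j}/\rho^0_{i,\alpha+e_j}-\rho^1_{i,\alpha}/\rho^0_{i,\alpha}\bigr)^2\le 0$, and adds the mass identity $\sum_\alpha|K_\alpha|(\rho^0_{i,\alpha}-\rho^1_{i,\alpha})=0$ to the left-hand side to recognize it as $\sum_\alpha|K_\alpha|(\rho^1_{i,\alpha}-\rho^0_{i,\alpha})^2/\rho^0_{i,\alpha}\ge 0$; both sides must then vanish, giving $\rho^1=\rho^0$. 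Your alternative --- verify that $\rho^1=\rho^0$ solves the scheme (all fluxes vanish since $G\equiv c_i$), conclude it is \emph{the} solution because the system matrix is strictly diagonally dominant thanks to the positive zeroth-order diagonal term (a structure the paper itself records in the remark of Section \ref{sec4.3}), and then get $\phi^1=\phi^0$ from uniqueness of the normalized Poisson solve --- is equally rigorous and buys two things: it avoids dividing by $\rho^0_{i,\alpha}$, so it does not tacitly require the steady state to be strictly positive, and it makes explicit the uniqueness mechanism that the paper's identity leaves implicit; what the paper's computation buys instead is a self-contained entropy-type estimate needing no appeal to invertibility. Two points you should spell out if writing this up: the logarithms in (ii)--(iii) require $\rho^{n+1}_{i,\alpha}>0$ rather than the $\ge 0$ guaranteed by Theorem \ref{First-Positive} (strict positivity does propagate through the M-matrix structure once $\rho^n\not\equiv 0$), and your blow-up observation in (iii) is the correct way to exclude a partially vanishing limit outside the trivial zero-mass case --- the printed proof is silent on both issues, so on these points you are more careful than the paper.
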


\begin{proof}
(i) We only need to prove $\rho^1_{i,\alpha}=\rho^0_{i,\alpha},$  for all $ i=1,\cdots,m, \ \alpha\in \mathcal{A}$.
Summing (\ref{fully1C}) with $n=0$ against $|K_\alpha| \rho^1_{i,\alpha}/\rho^0_{i,\alpha}$, using summation by parts, we obtain  
\begin{equation}\label{ST11}
\begin{aligned}
\sum_{\alpha\in \mathcal{A}}|K_\alpha| (\rho^{1}_{i,\alpha} -\rho^0_{i,\alpha})\frac{\rho^1_{i,\alpha}}{\rho^0_{i,\alpha}}=   & \tau \sum_{j=1}^d \sum_{\alpha\in \mathcal{A}} |K_\alpha| \frac{1}{h_j} (C_{i,\alpha+e_j/2}-C_{i,\alpha -e_j/2}) \frac{\rho^1_{i,\alpha}}{\rho^0_{i,\alpha}}\\
=& - \tau \sum_{j=1}^d \sum_{\alpha(j)\neq N_j} |K_\alpha| \frac{1}{h_j} C_{i,\alpha+e_j/2} \left( \frac{\rho^1_{i,\alpha+e_j}}{\rho^0_{i,\alpha+e_j}} - \frac{\rho^1_{i,\alpha}}{\rho^0_{i,\alpha}}  \right).
\end{aligned}
\end{equation}
Substituting  $\rho^0_{i,\alpha} =c_i e^{-\psi^0_{i, \alpha}}$ into $C_{i,\alpha+e_j/2}$, the right hand side of 
(\ref{ST11}) becomes  
\begin{align*}
RHS=  & - \tau  c_i\sum_{j=1}^d \sum_{\alpha(j)\neq N_j} |K_\alpha| \frac{ D_{i, \alpha+e_j/2}  e^{-\psi^0_{i,\alpha+e_j/2}} }{h^2_j}  \left( \frac{\rho^1_{i,\alpha+e_j}}{\rho^0_{i,\alpha+e_j}} - \frac{\rho^1_{i,\alpha}}{\rho^0_{i,\alpha}}  \right)^2\leq 0.
\end{align*}
Adding 
$
\sum_{\alpha\in \mathcal{A}}|K_\alpha| (\rho^{0}_{i,\alpha} -\rho^1_{i,\alpha})=0
$
 to the left hand side of (\ref{ST11}) leads to
\begin{align*}
LHS=  & \sum_{\alpha\in \mathcal{A}}|K_\alpha| \left [(\rho^{1}_{i,\alpha} -\rho^0_{i,\alpha})\frac{\rho^1_{i,\alpha}}{\rho^0_{i,\alpha}} +(\rho^{0}_{i,\alpha} -\rho^1_{i,\alpha})\right]\\
=& \sum_{\alpha\in \mathcal{A}}|K_\alpha|   \frac{ (\rho^{1}_{i,\alpha} -\rho^0_{i,\alpha})^2}{\rho^0_{i,\alpha}} \geq 0.
\end{align*}
Hence $LHS=RHS \equiv 0$,  we must have 
$$
\rho^1_{i,\alpha}=\rho^0_{i,\alpha}, \quad i=1,\cdots,m, \quad \alpha\in \mathcal{A}.
$$

(ii) The inequality (\ref{En}) when combined with  $E^{n+1}_h=E^n_h$  leads to $I^n=0$.  
From the proof  of Theorem \ref{First-Energy} in Appendix B it follows 
$$
\rho^{n+1}_{i,\alpha}=\rho^n_{i,\alpha}.
$$

(iii) Since $E^n_h$ is non-increasing in $n$, and we can verify that $E_h^n$ is bounded from below,  
hence 
$$
\lim_{n\to \infty} E^n_h=\inf \{E^n_h\}.
$$
Taking the limit in (\ref{En}), we have $\lim_{n\to \infty} I^n=0$, which implies  
$$
\rho^{\infty}_{i,\alpha} =c^\infty_i e^{-\psi^\infty_{i, \alpha}}. 
$$
Conservation of mass gives 
$$
c^\infty_i=\frac{\sum_{\alpha\in \mathcal{A}} |K_\alpha| \rho^{0}_{i,\alpha} }{\sum_{\alpha\in \mathcal{A}} |K_\alpha| e^{-\psi^\infty_{i,\alpha}}}.\quad i=1,\cdots,m, \quad \alpha\in \mathcal{A},
$$
where $\phi^\infty_\alpha$ in $\psi^\infty_{i, \alpha} = \frac{1}{k_BT}(q_i\phi^\infty_\alpha+\mu_{i,\alpha}) $ is obtained by solving (\ref{fully1Ps}) using $\rho^\infty_{i,\alpha}$.
\end{proof}

}

\section{Second order in time discretization} \label{sec4}
The semi-discrete scheme (\ref{PNPt1}) is first order accurate, one can design higher order in time schemes
based on (\ref{PNP1}).
 
The following is a second order time discretization, 
$$
\frac{\rho^{n+1}_i-\rho^n_i}{\tau}=R[(\rho^{n+1}_i+\rho^{n}_i)/2, \frac{3}{2}\psi^n_i-\frac{1}{2}\psi^{n-1}_i].
$$
This can be expressed as a prediction-correction method, 
\begin{equation}\label{PNPt2}
\frac{\rho^{*}_i-\rho^n_i}{\tau/2}=R[\rho^{*}_i, \frac{3}{2}\psi^n_i-\frac{1}{2}\psi^{n-1}_i], \quad \rho^{n+1}_i=2\rho^*_i-\rho^n_i.
\end{equation}
As argued for the first order scheme, this scheme is well-defined. 
\subsection{Second order fully-discrete scheme}\label{sec4.1}
With central spatial difference, our fully discrete second order (in both space and time) scheme reads
\begin{subequations}\label{fully2Ci}
\begin{align}
\label{PNPtp}
& \frac{\rho^{*}_{i, \alpha}-\rho^n_{i, \alpha}}{\tau/2}= R_{\alpha}[\rho^*_i, \frac{3}{2}\psi^n_i-\frac{1}{2}\psi^{n-1}_i],\\
\label{PNPtc}
& \rho^{n+1}_{i, \alpha}=2\rho^*_{i, \alpha}-\rho^n_{i,\alpha}.
\end{align}
\end{subequations}
 Positivity of $\rho^{n+1}_{\alpha}$ can be ensured if time steps are sufficient small. 
\begin{thm}\label{Second-Positive}
 Let  $\rho^{n+1}_{\alpha}$ be obtained from (\ref{fully2Ci}). If $\rho^n_{\alpha}\geq 0$ for all $\alpha\in \mathcal{A}$, and $\rho^b(x,t) \geq 0$ for $x\in \partial \Omega_D$, then 
$$
\rho^{n+1}_{\alpha}\geq 0, \quad \alpha\in \mathcal{A}
$$ 
provided $\tau$ is sufficiently small.
\end{thm}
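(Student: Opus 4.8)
The plan is to reduce the positivity of $\rho^{n+1}$ to that of the intermediate quantity $\rho^*$, which is governed by an equation of exactly the same type as the first order fully-discrete scheme (\ref{fully1C}). Indeed, the prediction step (\ref{PNPtp}) is precisely (\ref{fully1C}) with $\tau$ replaced by $\tau/2$ and with $\psi^n_i$ replaced by the extrapolated potential $\frac{3}{2}\psi^n_i-\frac{1}{2}\psi^{n-1}_i$. Hence Theorem \ref{First-Positive} applies verbatim to (\ref{PNPtp}): since $\rho^n_\alpha\ge 0$ and $\rho^b(x,t)\ge 0$ on $\partial\Omega_D$, we obtain $\rho^*_\alpha\ge 0$ for all $\alpha\in\mathcal{A}$, with \emph{no} restriction on $\tau$. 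This foundational step is what confines the ``sufficiently small $\tau$'' condition to the correction step (\ref{PNPtc}), $\rho^{n+1}_\alpha=2\rho^*_\alpha-\rho^n_\alpha$.

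Next I would isolate the diagonal contribution of the discrete operator. Writing $R_\alpha[\rho^*_i,\cdot]=b_{i,\alpha}-a_{i,\alpha}\rho^*_{i,\alpha}$, where $a_{i,\alpha}$ is the coefficient of $\rho^*_{i,\alpha}$ and $b_{i,\alpha}$ gathers all remaining terms, one reads off directly from the flux formulas (\ref{Cfully1}), (\ref{BC11D}), (\ref{BC11N}) that
\[
a_{i,\alpha}=\sum_{j}\frac{g^*_{i,\alpha}}{h_j^2}\left(\bar D_{i,\alpha+e_j/2}+\bar D_{i,\alpha-e_j/2}\right)>0,
\]
with the obvious modifications dropping no-flux faces and carrying the Dirichlet weights, while $b_{i,\alpha}$ is a non-negative combination of the neighboring values $\rho^*_{i,\alpha\pm e_j}\ge 0$ and, on $\partial\Omega_D$, of the boundary data $\rho^b\ge 0$. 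Here $\bar D$ and $g^*=e^{\frac32\psi^n-\frac12\psi^{n-1}}$ are built from the extrapolated potential and are strictly positive under the standing assumption $D_i\ge D_0>0$. The crucial observation is that $a_{i,\alpha}$ depends only on these explicitly known quantities, not on $\rho^*$ itself, so the $\alpha$-th equation of (\ref{PNPtp}), evaluated at the computed solution, rearranges to the a posteriori identity
\[
\rho^*_{i,\alpha}=\frac{\rho^n_{i,\alpha}+\frac{\tau}{2}b_{i,\alpha}}{1+\frac{\tau}{2}a_{i,\alpha}}.
\]

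Substituting this into the correction step then yields the closed form
\[
\rho^{n+1}_{i,\alpha}=2\rho^*_{i,\alpha}-\rho^n_{i,\alpha}=\frac{\rho^n_{i,\alpha}\left(1-\frac{\tau}{2}a_{i,\alpha}\right)+\tau b_{i,\alpha}}{1+\frac{\tau}{2}a_{i,\alpha}}.
\]
Since $b_{i,\alpha}\ge 0$, $\rho^n_{i,\alpha}\ge 0$, and the denominator is positive, the right-hand side is non-negative as soon as $1-\frac{\tau}{2}a_{i,\alpha}\ge 0$. Thus $\rho^{n+1}_\alpha\ge 0$ for all $\alpha$ provided $\tau\le 2/\max_{i,\alpha}a_{i,\alpha}$, which is the quantitative form of the hypothesis that $\tau$ be sufficiently small.

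I expect the only delicate point to be the bookkeeping of the decomposition $R_\alpha=b_{i,\alpha}-a_{i,\alpha}\rho^*_{i,\alpha}$ at the boundary cells: one must check that the Dirichlet flux terms in (\ref{BC11D}) split into a piece proportional to $\rho^*_{i,\alpha}$, which only strengthens the positive diagonal $a_{i,\alpha}$, and a piece proportional to $\rho^b\ge 0$, which only enlarges $b_{i,\alpha}$, and that the no-flux terms (\ref{BC11N}) simply vanish without affecting any sign. Once $a_{i,\alpha}>0$ and $b_{i,\alpha}\ge 0$ are confirmed uniformly over interior, Neumann, and Dirichlet cells, the algebraic identity above closes the argument; everything else is routine rearrangement rather than genuine analysis.
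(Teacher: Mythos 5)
Your proof is correct, and it takes a genuinely different route from the paper's. The paper eliminates $\rho^*$ entirely: substituting (\ref{PNPtc}) into (\ref{PNPtp}) gives the compact two-level equation (\ref{comp}), and positivity of $\rho^{n+1}$ is then proved by the same discrete minimum-principle template used for Theorem \ref{First-Positive}: at a cell $\beta$ minimizing $G^{n+1}_{i,\alpha}=\rho^{n+1}_{i,\alpha}g^*_{i,\alpha}$, the implicit side of (\ref{p21}) is bounded by its diagonal term since $G^{n+1}_{i,\beta}\le G^{n+1}_{i,\beta\pm e_j}$, while the explicit side is nonnegative once $\tau$ is small; positivity of $\rho^*$ is never invoked. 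You keep the predictor--corrector structure instead: Theorem \ref{First-Positive} applied verbatim to (\ref{PNPtp}) gives $\rho^*_{\alpha}\ge 0$ for every $\tau$ (a fact the paper records only as a remark after the theorem), and then, because all neighboring values $\rho^*_{i,\alpha\pm e_j}$ and the Dirichlet data are nonnegative, the off-diagonal part $b_{i,\alpha}$ is nonnegative, so a purely local rearrangement of the $\alpha$-th equation of (\ref{PNPtp}) combined with (\ref{PNPtc}) yields the closed form for $\rho^{n+1}_{i,\alpha}$, which is nonnegative when $\tau\le 2/\max_{i,\alpha}a_{i,\alpha}$. Your argument is thus more modular (it reuses the first-order theorem), entirely local (no global minimizer argument at level $n+1$), and treats interior, Dirichlet, and Neumann cells uniformly, whereas the paper's is self-contained within its minimum-principle framework and does not need the intermediate positivity of $\rho^*$. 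One further remark: your threshold $\tau\le 2/\bigl(g^*_{i,\alpha}\sum_{j}h_j^{-2}(\bar{D}^*_{i,\alpha+e_j/2}+\bar{D}^*_{i,\alpha-e_j/2})\bigr)$ is in fact the one consistent with (\ref{comp}); the paper's display (\ref{p21}) carries a mis-scaled factor of $g^*_{i,\alpha}$ (multiplying (\ref{comp}) through by $2g^*_{i,\alpha}$ produces $2G^{n+1}_{i,\alpha}$ and $\tau g^*_{i,\alpha}$ on the bracketed sums, not $g^*_{i,\alpha}G^{n+1}_{i,\alpha}$ and $\tau$), so its quoted bound differs from yours by factors of $g^*_{i,\alpha}$; since the theorem only asserts positivity for sufficiently small $\tau$, either explicit bound certifies the statement.
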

\begin{proof}
Inserting (\ref{PNPtc}) into (\ref{PNPtp}) leads to the following compact  form of the scheme (\ref{fully2Ci}):
\begin{equation}\label{comp}
\rho^{n+1}_{i,\alpha}-\frac{\tau}{2}R_{\alpha}[\rho^{n+1}_i, \frac{3}{2}\psi^n_i-\frac{1}{2}\psi^{n-1}_i]
=\rho^n_{i,\alpha}+\frac{\tau}{2}R_{\alpha}[\rho^n_i, \frac{3}{2}\psi^n_i-\frac{1}{2}\psi^{n-1}_i],
\end{equation}
where we have used the linearity of $R_{\alpha}[\cdot, \cdot]$ on the first entry.

Set 
$$
g^*_{i,\alpha}=e^{\frac{3}{2}\psi^n_{i,\alpha}-\frac{1}{2}\psi^{n-1}_{i,\alpha}}, \  \bar D^*_{i,\alpha+e_j/2}=D_{i,\alpha+e_j/2}e^{-\frac{3}{2}\psi^n_{i,\alpha+e_j/2}+\frac{1}{2}\psi^{n-1}_{i,\alpha+e_j/2}},\ 
G^{n}_{i,\alpha}=\rho^n_{i,\alpha} g^*_{i,\alpha},
$$
then the scheme (\ref{comp}) can be rewritten as
\begin{equation}\label{p21}
\begin{aligned}
& g^*_{i,\alpha}G^{n+1}_{i,\alpha}-\sum_{j=1}^d \frac{\tau}{h_j^2}[\bar{D}^*_{i, \alpha+e_j/2}(G^{n+1}_{i,\alpha+e_j}-G^{n+1}_{i,\alpha})-\bar{D}^*_{i, \alpha-e_j/2}(G^{n+1}_{i,\alpha}-G^{n+1}_{i,\alpha-e_j})]\\
& =g^*_{i,\alpha}G^n_{i,\alpha}+\sum_{j=1}^d \frac{\tau}{h_j^2}[\bar{D}^*_{i, \alpha+e_j/2}(G^{n}_{i,\alpha+e_j}-G^{n}_{i,\alpha})-\bar{D}^*_{i, \alpha-e_j/2}(G^{n}_{i,\alpha}-G^{n}_{i,\alpha-e_j})].
\end{aligned}
\end{equation}
Let $\beta$ be such that 
$$
G^{n+1}_{i,\beta}=\min_{\alpha\in \mathcal{A}} G^{n+1}_{i,\alpha},
$$
 it suffices to prove $G^{n+1}_{i,\beta}\geq 0$. We prove the result when $K_{\beta}$ is an interior cell, the result for boundary cells can be proved similarly. 

Since $G^{n+1}_{i,\beta}\leq G^{n+1}_{i,\beta\pm e_j}$ and $G^n_{i,\beta\pm j}\geq 0$, thus equation (\ref{p21}) on cell $K_{\beta}$ reduces to the inequality:
$$
g^*_{i,\beta}G^{n+1}_{i,\beta}\geq \left(g^*_{i,\beta}- \tau \sum_{j=1}^d \frac{1}{h_j^2}(\bar{D}^*_{i, \beta+e_j/2}+\bar{D}^*_{i, \beta-e_j/2})\right)G^n_{i,\beta},
$$
we see that $G^{n+1}_{i,\beta}\geq 0$ is insured if 
$$
\tau\leq \min_{\alpha} \left \{ \frac{g^*_{i,\alpha}} { \sum_{j=1}^d \frac{1}{h_j^2}(\bar{D}^*_{i, \alpha+e_j/2}+\bar{D}^*_{i, \alpha-e_j/2}) } \right \}.
$$
The stated result thus follows.
\end{proof}

We should point out that numerical density $\{ \rho^n_{\alpha}\}$ obtained by the second order scheme (\ref{fully2Ci}) may not be non-negative for large time step $\tau$, though $\{ \rho^*_{\alpha}\}$ stays positive.  
We shall restore solution positivity by using a local limiter, which 
was first introduced in \cite{LMJCAM} for one-dimensional case.

\subsection{Positivity-preserving limiter}\label{sec4.2}
  We present a local limiter to restore positivity of  $\rho$ if  
$$
\sum_{\alpha \in \mathcal{A} }|K_\alpha| \rho_\alpha >0,
$$ 
but $\rho_\beta<0$ for some $\beta \in \mathcal{A}$.  The idea is to find a neighboring index set $ S_\beta$ such that the local average 
$$
\bar{\rho}_\beta=\frac{1}{|S_{\beta}|}\sum_{\gamma \in S_{\beta}} |K_\gamma|\rho_{\gamma} >0,
$$
where $|S_{\beta}|$ denotes the minimum number of indices for which $\rho_{\gamma} \ne 0$ and $\bar \rho_\beta>0$, then use this local average as a reference to define the following scaling limiter
\begin{equation}\label{lim}
\tilde{\rho}_{\alpha}=\theta \rho_{\alpha}+(1-\theta)\bar{\rho}_\beta/|K_\alpha|, \quad \alpha\in S_{\beta},
\end{equation}
where 
$$
\theta=\min \left\{1,  \frac{\bar{\rho_\beta}}{\bar{\rho}_\beta -\rho_{\min}}\right\}, 
\quad \rho_{\min}=\min_{\gamma\in S_{\beta}} |K_\gamma|\rho_{\gamma}.
$$
Recall the result stated in Lemma 5.1 in \cite{LMJCP}, such limiter restores solution positivity and respects the local mass conservation. In addition, for any sequence $g_\alpha$ with $g_\alpha \geq 0$, 
 we have 
\begin{align}\label{pg}
|\tilde \rho_\alpha -g_\alpha |\leq (1+|S_\beta| \Lambda)\max_{\gamma  \in S_\beta}|\rho_\gamma -g_\gamma|, \quad\alpha \in S_\beta,
\end{align}
where $\Lambda$ is the upper bound of mesh ratio $|K_\gamma|/|K_\alpha|$.  Let $\rho_\alpha$ be the approximation of $\rho(x)\geq 0$, we let $g_\alpha=\rho(x_\alpha)$ or the average of $\rho$ on $K_\alpha$, 
so we can assert that the accuracy is not destroyed by the limiter as long as $|S_\beta|\Lambda$ is uniformly bounded. Boundedness of $|S_{\beta}|$ for shape-regular meshes was rigorously proved in \cite{LMJCP} 
for the one-dimensional case. We restate such result in the present setting in the following.
\begin{thm}\label{Limiter} 
Let $\{\rho_\alpha\}$ be an approximation of $\rho(x) \geq 0$ over shape regular meshes, and $\rho \in C^k(\Omega)$ ($k\geq 2$).  If $\rho_\beta<0$ (or only finite number of neighboring values are negative), then there exists $C^*>0$ finite such that 
$$
|\tilde \rho_\alpha -\rho(x_\alpha)|\leq C^*\max_{\alpha \in S_\beta} |\rho_\alpha -\rho(x_\alpha)|, \quad 
\forall \alpha \in S_\beta,
$$ 
where $C^*$ may depend on the local meshes associated with $S_\beta$.  
\end{thm}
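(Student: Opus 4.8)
The plan is to reduce the statement to the already-established perturbation bound (\ref{pg}) and then to control the cardinality $|S_\beta|$ of the stencil produced by the limiter. First I would apply (\ref{pg}) with the nonnegative comparison sequence $g_\alpha=\rho(x_\alpha)$, which is legitimate precisely because $\rho\geq 0$. This immediately yields
$$
|\tilde\rho_\alpha-\rho(x_\alpha)|\leq (1+|S_\beta|\Lambda)\max_{\gamma\in S_\beta}|\rho_\gamma-\rho(x_\gamma)|,\qquad \alpha\in S_\beta,
$$
so that one may take $C^*=1+|S_\beta|\Lambda$. Since $\Lambda$, the upper bound of the mesh ratio, is controlled by shape-regularity, the entire claim reduces to showing that $|S_\beta|$ is finite and, for genuine accuracy preservation, bounded independently of the mesh size.

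Next I would quantify the deficit that the stencil $S_\beta$ must overcome. Because $\rho_\beta<0$ while $\rho(x_\beta)\geq 0$, one has $|\rho_\beta|\leq |\rho_\beta-\rho(x_\beta)|\leq E$, where $E:=\max_\gamma|\rho_\gamma-\rho(x_\gamma)|$ is the local approximation error; hence the negative mass $|K_\beta|\,|\rho_\beta|$ is no larger than $|K_\beta|E$. In particular $0\leq\rho(x_\beta)\leq E$, so $x_\beta$ lies within the approximation tolerance of a minimizer of $\rho$, a fact that drives the rest of the argument.

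The core step is to exhibit, within a number of cell layers around $\beta$ that does not grow as $h\to 0$, enough cells $\gamma$ with $\rho_\gamma>0$ to outweigh this deficit. Here I would invoke $\rho\in C^k$, $k\geq 2$, together with $\rho\geq 0$: at a zero of $\rho$ the gradient vanishes and Taylor expansion gives quadratic (or higher) growth, so moving a fixed physical distance away from $x_\beta$ one reaches points where $\rho$ exceeds any fixed multiple of $E$ once $h$ is small. For such cells $\rho_\gamma\geq\rho(x_\gamma)-E>0$, and a bounded number of them supplies positive mass dominating $|K_\beta|E$; this forces $\bar\rho_\beta>0$ with $|S_\beta|$ bounded, and the conclusion follows with $C^*=1+|S_\beta|\Lambda$.

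The main obstacle I anticipate is the degenerate situation in which $\rho$ vanishes to high order, or is flat near zero over a region large compared with $h$, so that the quadratic-growth estimate is not immediately available and the nearest strictly positive cells could in principle be far from $\beta$. To handle this I would use the hypothesis that only finitely many neighboring values are negative, combined with the positivity of the total discrete mass $\sum_\alpha|K_\alpha|\rho_\alpha>0$: the greedy enlargement of the stencil must then terminate at finitely many cells, which secures a finite $C^*$ depending on the local meshes; the uniform-in-$h$ bound is recovered at nondegenerate minima from the genuine quadratic growth and shape-regularity, exactly along the lines of the one-dimensional argument of \cite{LMJCP}.
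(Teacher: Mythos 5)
Your reduction is the same as the paper's: apply (\ref{pg}) with $g_\alpha=\rho(x_\alpha)$ (legitimate since $\rho\geq 0$), so that everything hinges on bounding the stencil cardinality $|S_\beta|$, and $C^*=1+|S_\beta|\Lambda$. The gap is in the step that is supposed to bound $|S_\beta|$. You reach for cells at a \emph{fixed physical distance} $\delta$ from $x_\beta$, where $\rho\geq c\delta^2\gg E$, and claim that a bounded number of them "forces $\bar\rho_\beta>0$ with $|S_\beta|$ bounded." But $S_\beta$ is not an arbitrary index set: by the construction preceding (\ref{lim}) and the algorithm of section \ref{sec4.3}, it is grown as nested boxes of cells around $\beta$, so to contain even one cell at distance $\delta$ it must contain all of the roughly $(\delta/h)^d$ cells in between. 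Your argument therefore only shows that the greedy enlargement terminates within $O((\delta/h)^d)$ cells, a bound that blows up as $h\to 0$; then $C^*=1+|S_\beta|\Lambda=O(h^{-d})$, and the inequality of the theorem no longer expresses preservation of accuracy, which is its entire content. A second flaw feeds into this: at a zero of a nonnegative $C^2$ function, Taylor gives the \emph{upper} bound $\rho(x)\leq C|x-x^*|^2$; the quadratic \emph{lower} bound ("growth") you invoke is an extra nondegeneracy assumption on the Hessian, which you concede only when discussing the degenerate obstacle, and your fallback there --- $|S_\beta|$ is finite because the whole mesh is finite --- is vacuous for accuracy purposes.

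What is missing is an argument that works at the scale of a fixed number of \emph{mesh layers}, and this is exactly where the paper's proof goes a different way. The paper never uses a pointwise lower bound on $\rho$: it bounds the stencil sum from below by $\int_{\cup_{\alpha\in S_\beta}K_\alpha}\rho\,dx-(C+\lambda\Lambda^2)h^2\sum_{\alpha\in S_\beta}|K_\alpha|$ (cell-average versus midpoint consistency plus the assumed $O(h^k)$, $k\geq 2$, approximation error), and then converts the $h^2$ term by the counting inequality $h^d\leq |\cup_{\alpha\in S_\beta}K_\alpha|/|S_\beta|$, so that positivity of the local average $g(\eta)$ of $\rho$ produces an explicit threshold $A$ with the property that any rectangular stencil of more than $A$ cells already has positive sum; hence the greedy stencil stops with $|S_\beta|\leq\lfloor A\rfloor+1$, with no Hessian nondegeneracy needed. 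If you want to keep your pointwise route, the repair is to go out a fixed number $p$ of layers rather than a fixed distance: at layer $p$ one has $\rho(x_\gamma)\geq c(p-1)^2h^2$ while each cell's error is at most $Ch^2$, so the outermost layer's $(2p+1)^{d-1}$ cells gain at least $(c(p-1)^2-C)h^2$ per unit volume against a total possible loss of order $(2p+1)^d Ch^2$ per unit volume, and $c(p-1)^2>C(2p+1)+C$ holds for a fixed $p$ depending only on $c$, $C$, $d$. That salvages your nondegenerate case with a genuinely $h$-independent $|S_\beta|$, but it still covers less than the paper's integral-comparison argument, which handles flat and higher-order zeros as well (at the price of a constant that may depend on the local mesh, exactly as the theorem statement allows).
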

\begin{proof}
For simplicity, we prove only for the case of uniform meshes (e.g. uniform in each dimension).
Let $h=\min_{1\leq j \leq d}h_j \leq 1$ and $h_j\leq \Lambda h$ for some $\Lambda>0$. From (\ref{pg}) we see that 
it suffices to show there exists $A^*>0$ finite such that 
$
|S_\beta|\leq A^*, 
$
with which we will have $C^*=1+A^*\Lambda$.  Under the smoothness assumption of $\rho$ we may assume $|\rho_\alpha-\rho(x_\alpha)|\leq C h^k$. 
Under the assumption $\rho_{\beta}<0$, $\rho$ must touch zero near $x_{\beta}$. We discuss the case  where $\rho(x^*)=0$ and $\nabla \rho(x^*)=\vec 0$ with $\rho(x)>0$ for $x(j) \geq x^*(j)$, $j=1,\cdots, d,$ locally with $x^*\in K_{\beta}.$ To be concrete, we consider $\beta=(1,\cdots,1)$ and $\int_{K_{\beta}}\rho(x)dx>0$. From the limiter construction we have $S_\beta$ such that 
\begin{equation}\label{Lim1}
\sum_{\alpha\in S_{\beta}}|K_\alpha|\rho_\alpha>0.
\end{equation}
The rest of the proof is devoted to bounding $|S_\beta|$.  The assumed error bound gives  
\begin{equation}\label{Lim2}
\rho_{\alpha}\geq \rho(x_{\alpha})-Ch^k.
\end{equation}
From $\rho\in C^k(\Omega) (k\geq 2)$, we have 
\begin{equation}\label{Lim3}
\rho(x_{\alpha})\geq \bar\rho_\alpha-\lambda \Lambda^2 h^2,
\end{equation}
with $\lambda=\frac{d}{24}\max_{j=1,\cdots,d} |\partial_{x_jx_j}\rho|$ and the cell average $\bar\rho_\alpha=\frac{1}{|K_{\alpha}|}\int_{K_\alpha}\rho(x)dx$. From (\ref{Lim2}) and (\ref{Lim3}), we see that the left hand side of (\ref{Lim1}) is bounded from below by  
\begin{equation}\label{Lim4}
\begin{aligned}
\sum_{\alpha\in S_{\beta}}|K_\alpha|\rho_\alpha \geq &\sum_{\alpha\in S_{\beta}} |K_\alpha| \left  (\bar\rho_\alpha-(C+\lambda \Lambda^2) h^2 \right)\\
= & \int_{\cup_{\alpha\in S_\beta} K_\alpha}\rho(x)dx-(C+\lambda \Lambda^2) h^2 \sum_{\alpha\in S_{\beta}} |K_\alpha|.
\end{aligned}
\end{equation}
Without loss of generality we assume $\cup_{\alpha\in S_\beta} K_\alpha$ is a rectangle in $\R^d$;  otherwise we could add more cells to complete the rectangle.
Let 
$$
|\cup_{\alpha\in S_\beta} K_\alpha|=\Pi_{j=1}^d \eta_j, \quad h_j\leq  \eta_j \leq L_j, 
$$
and  $\vec \eta=(\eta_1, \cdots, \eta_d), \ \vec h=(h_1, \cdots, h_d).$
Rewriting integral  in (\ref{Lim4}) we have  
$$
\sum_{\alpha\in S_{\beta}}|K_\alpha|\rho_\alpha \geq \left[ g(\eta) - (C+\lambda \Lambda^2) h^2\right] \sum_{\alpha\in S_{\beta}} |K_\alpha|,
$$
where 
$$
g(\eta):=\int_0^1\!\cdots \!\int_0^1\!\! \rho\left( {\rm diag}(\vec  \theta)\vec \eta+x_\beta-\frac{1}{2}\vec h \right) d\theta_1 \cdots d\theta_d.
$$
From the fact
$
h^d\leq \frac{\eta_1\cdots \eta_d}{|S_\beta|},
$
we can see that the term in the bracket is bounded from below by
$$
g(\eta) -(C+\lambda \Lambda^2)  \left(\frac{\eta_1\cdots \eta_d}{|S_\beta|}\right)^{2/d},
$$
which is positive if 
$$
|S_\beta|> (C+\lambda \Lambda^2)^{d/2} {g(\eta)}^{-d/2} \eta_1\cdots \eta_d. 
$$
This can be insured if we take 
$$
|S_\beta|=\lfloor A \rfloor+1, 
$$
where 
$$
A=(C+\lambda\Lambda^2)^{d/2} \max_{\eta_j\in [h_j, L_j], j=1,\cdots, d} {g(\eta)}^{-d/2} \eta_1\cdots \eta_d. 
$$
This is bounded and may depend on the local mesh of $K_\beta$. 
\end{proof}

Note that our numerical solutions feature the following property: if $\rho^n_{i, \alpha}= 0$, then 
$$
\rho^{n+1}_{i, \alpha} =2\rho^*_{i, \alpha} - \rho^n_{i, \alpha} \geq 0
$$
due to the fact $\rho^*_{i, \alpha} \geq 0 $. This means that if $\rho^{\rm in}(x)=0$ on an interval, 
then $\rho^1_{i, \alpha}$ cannot be negative in most of nearby cells. Thus negative values appear only where 
the exact solution turns from zero to a positive value, and the number of these values are finitely many. 
Our result in Theorem \ref{Limiter} is thus applicable.

\subsection{Algorithm}\label{sec4.3}
The following algorithm is only for the second order scheme with limiter.
\begin{enumerate}
\item Initialization: From the initial data $\rho^{in}_i(x)$, obtain 
$$\rho^0_{i,\alpha}=\frac{1}{|K_{\alpha}|}\int_{K_{\alpha}}\rho_i^{in}(x)dx,\  i=1,\cdots,m,\ \alpha\in \mathcal{A},
$$ 
by using central point quadrature.
\item Update to get $\{\rho^1_{i,\alpha}\}$:  Compute $\{\phi^0_{\alpha}\}$ from (\ref{fully1Ps}), 
 then obtain $\{\rho_{i,\alpha}^1\}$ by the first order scheme (\ref{fully1C}).
\item Update from $\{\rho^n_{i,\alpha}\}$: For $n\geq 1$, compute $\{\phi_{\alpha}^n\}$ from scheme  (\ref{fully1Ps}) 
then get $\{\rho_{i,\alpha}^{n+1}\}$ from (\ref{fully2Ci}).
\item Reconstruction: if necessary, locally replace $\rho_{i,\alpha}^{n+1}$ by $\tilde \rho_{i,\alpha}^{n+1}$ using the limiter defined in (\ref{lim}). 
\end{enumerate}
The following algorithm can be called to find an admissible set $S_{\alpha}$  used in (\ref{lim}).
\begin{enumerate}
\item[(i)] Start with $S_{\beta}=\{ \beta\}$, $p=1$.
\item[(ii)] For $l_j=\max \{1, \alpha (j)-p\}: \min \{\alpha (j)+p, N_j\}$ with $j=1,\cdots, d$. 
\\
 If $\alpha:=(l_1, \cdots, l_d)\notin S_{\beta}$ and $\rho^{n+1}_{i, \alpha}\ne 0$, 
 then set $S_{\beta}=S_{\beta}\cup \left\{\alpha \right\}$.\\
 If $\bar{\rho}_\beta >0$, then stop, else go to (iii).
\item[(iii)] Set $p=p+1$ and go to (ii).
\end{enumerate}
\begin{rem}
The coefficient matrices of the linear systems obtained by (\ref{fully1C}), (\ref{fully1Ps}), and  (\ref{PNPtp}) are sparse, diagonally dominant, and symmetric, hence
more efficient linear system solvers, such as the ILU preconditioner + FGMRES (see
e.g., \cite{S93}), ILU preconditioner + Bicgstab (see e.g., \cite{CMZ16}), can be used.
\end{rem}

%
%

\section{Numerical tests}\label{sec5}

In this section, we implement the fully discrete schemes (\ref{fully1C}) and (\ref{fully2Ci}) 
to demonstrate their orders of convergence and capacity to preserve solution properties. 
In both schemes the numerical solution  $\phi^n_\alpha$ is computed by the scheme (\ref{fully1Ps}). 
Errors in the accuracy tests are measured in the following discrete $l^1$ norm:
$$
\text{error}=\sum_{\alpha\in \mathcal{A}}|K_\alpha||\tilde g_{\alpha}- g_{\alpha}|.
$$
Here $ g_{\alpha}$ denotes the numerical solution, say $g_{\alpha}=\rho_{i,\alpha}^n$ or $\phi_{\alpha}^n$ at time $t=n\tau$, and $ \tilde g_{\alpha}$ indicates the cell average of the corresponding 
exact solutions. 

In our numerical tests, the sparse linear systems obtained by (\ref{fully1C}), (\ref{fully1Ps}), and  (\ref{PNPtp}) are solved by  ILU preconditioned  FGMRES \cite{S93} algorithm using compressed row format of the coefficient matrices. In the three-dimensional case, the coefficient matrices of the linear systems are 7-diagonal matrices. It is worth to mention that the compressed row format allows us to store a 
$l\times l$ 7-diagonal matrix by using at most $15l$ storage locations with $l=N_x\times N_y\times N_z$.  With $30\times30\times30$ cells, we can save $99\%$ of the storage space needed for storing the resulting coefficient matrices.

In our three examples below we consider the computational domain
$$
\Omega=(0,1)\times(0,1)\times(0,1). 
$$

\begin{example}\label{ex1}(Accuracy test) 
In this test we numerically verify the accuracy and order of schemes (\ref{fully1C}) and  (\ref{fully2Ci}) by using manufactured solutions.  Consider   
  \begin{equation}
 \left \{
\begin{array}{rl}
 \hfill    \rho_1(\mathbf{x},t)& =4(x^2(1-x)^2+y(1-y))e^{-t},\\
 \hfill    \rho_2(\mathbf{x},t) & =(y(1-y)+z^2(1-z)^2)e^{-t},\\
 \hfill    \phi(\mathbf{x},t) &=(x^2(1-x)^2+y(1-y)+z^2(1-z)^2)e^{-t}
\end{array}
\right.
\end{equation}
and
$$
\partial\Omega_D=\{\mathbf{x}\in\bar  \Omega : y=0,1\}, \quad \partial \Omega_N=\partial \bar \Omega \setminus \partial \Omega_D,
$$
then they are exact solutions to the following problem
  \begin{equation}
 \left \{
\begin{array}{rl}
 \hfill  \partial_t \rho_1 =&\nabla \cdot(\nabla \rho_1 +  \rho_1 \nabla \phi)+f_1(\mathbf{x},t),  \hfill \ \ \  \mathbf{x} \in \Omega, \ t>0,\\
 \hfill  \partial_t \rho_2 =&\nabla \cdot(\nabla \rho_2 -  \rho_2 \nabla \phi)+f_2(\mathbf{x},t),  \hfill \ \ \   \mathbf{x} \in \Omega, \ t>0,\\
 \hfill  -\Delta \psi =&\rho_1 -   \rho_2 +f_3(\mathbf{x},t),  \hfill \ \ \   \mathbf{x} \in \Omega, \ t>0,
\end{array}
\right.
\end{equation}
where source terms $f_1(\mathbf{x},t), f_2(\mathbf{x},t)$ and $f_3(\mathbf{x},t),$ and the initial and boundary conditions  are determined by the exact solutions.

We first test the accuracy of the semi-implicit scheme (\ref{fully1C}) by using various spatial step size $h$, 
errors and orders at $t=1$ are listed in Table 1 (with  $\tau=h$) and in Table 2 (with  $\tau=h^2$), respectively. 
We observe the first order accuracy in time and the second order accuracy in space. We then test the accuracy of  the scheme (\ref{fully2Ci}) with time step size $\tau=h$. From Table 3, we  see the second order accuracy in both time and space.  
 \small{
  \begin{table}[ht]\label{ex11}
        \centering
                \caption{\tiny{Scheme  (\ref{fully1C}) with $\tau=h$ }}
        \begin{tabular}{|c| c |c |c| c |c|c| c |}
            \hline
              $N_x\times N_y\times N_z$ & \  $\rho_1$ error & order&  $\rho_2$ error &order & $\phi$ error  &order \\ [0.5ex] 
            \hline
           $8\times 8\times 8$&   4.7508E-02&                  -&  1.3904E-02&           - &5.7213E-03&-\\
            $16\times 16\times 16$&  2.1283E-02&  1.1585 &  5.8701E-03&  1.2440 &2.0987E-03 & 1.4468\\
            $32\times 32\times 32$ &  1.0060E-02&  1.0811 &   2.6956E-03& 1.1228   & 8.6460E-04& 1.2794\\
            $64\times 64\times 64$&   4.8890E-03&  1.0410 &   1.2915E-03& 1.0616  &3.8667E-04& 1.1609
             \\ [1ex]
            \hline
        \end{tabular}
     \end{table}
  \begin{table}[ht]\label{ex12}
              \centering
                \caption{\tiny{Scheme  (\ref{fully1C})  with $\tau=h^2$ }}
        \begin{tabular}{|c| c |c |c| c |c|c| c |}
            \hline
              $N_x\times N_y\times N_z$ & $\rho_1$ error & order& $\rho_2$ error &order & $\phi$  error & order \\ [0.5ex] 
            \hline
          $8\times 8\times 8$&   1.1252E-02&                  -&  4.0301E-03&-        &3.1194E-03&-\\
            $16\times 16\times 16$&  2.7824E-03&  2.0158 &  9.8548E-04&  2.0319  &7.7117E-04&2.0161 \\
            $32\times 32\times 32$ &  6.9369E-04&  2.0040 &   2.4502E-04& 2.0079   & 1.9225E-04& 2.0041 \\
            $64\times 64\times 64$&   1.7330E-04&  2.0010&   6.1170E-05&  2.0020  &4.8028E-05& 2.0010
             \\ [1ex]
            \hline
        \end{tabular}  
          \end{table}
  \begin{table}[ht]\label{ex13}
        \centering
                \caption{\tiny{Scheme (\ref{fully2Ci}) with $\tau=h$ }}
        \begin{tabular}{|c| c |c |c| c |c|c| c |}
            \hline
              $N_x\times N_y\times N_z$ &  $\rho_1$ error & order& $\rho_2$ error &order & $\phi$  error &order \\ [0.5ex] 
            \hline
          $8\times 8\times 8$&   5.5476E-03&               -&  2.3247E-03&             - &2.7378E-03&-\\
            $16\times 16\times 16$&  1.5073E-03& 1.8799&  6.0465E-04&  1.9429  &6.7758E-04&2.0146 \\
            $32\times 32\times 32$ &  3.9635E-04&  1.9271& 1.5851E-04& 1.9315   & 1.6895E-04& 2.0038 \\
            $64\times 64\times 64$&   1.0182E-04&  1.9608&   4.0875E-05&  1.9553  &4.2206E-05& 2.0011
             \\ [1ex]
            \hline
        \end{tabular}
     \end{table}     }
\end{example}
\begin{example}\label{2}(Solution positivity)
We consider the two-species PNP system with initial data of form 
 \begin{equation}\label{Pos}
 \left \{
\begin{array}{rl}
 \hfill  \partial_t \rho_1 =&\nabla \cdot(\nabla \rho_1 +  \rho_1 \nabla \phi),  \hfill \ \ \  \mathbf{x} \in \Omega, \ t>0,\\
 \hfill  \partial_t \rho_2 =&\nabla \cdot(\nabla \rho_2 -  \rho_2 \nabla \phi),  \hfill \ \ \   \mathbf{x} \in \Omega, \ t>0,\\
 \hfill  -\Delta \psi =&\rho_1 -   \rho_2 +10\chi_{_{[0.2,0.4] \times [0.2, 0.4] \times [0.2, 0.4]}},  \hfill \ \ \   \mathbf{x} \in \Omega, \ t>0,\\
 \hfill  \rho_1^{in}(\mathbf{x})= & \chi_{_{[0,0.25] \times [0, 0.25] \times [0, 0.25]}},\\
 \hfill  \rho_2^{in}(\mathbf{x})= & 2\chi_{_{[0,0.25] \times [0, 0.25] \times [0, 0.25]}}.
\end{array}
\right.
\end{equation}
This corresponds to (\ref{PNP0}) with $D_1=D_2=1,$ $q_1=-q_2=1,$ $k_BT=1,$ $\epsilon(\mathbf{x})=4\pi$, $\mu_i=0$, and $f(\mathbf{x})=10\chi_{_{[0.2,0.4] \times [0.2, 0.4] \times [0.2, 0.4]}}.$ 

With $
 \partial \Omega_D=\{\mathbf{x}\in\bar  \Omega : y=0,1\},
$
and $\partial \Omega_N=\partial \bar \Omega \setminus \partial \Omega_D$,  we solve the problem subject to mixed boundary conditions 
  \begin{equation}\label{Mixbc}
 \left \{
\begin{array}{rl}
 \hfill  (\nabla \phi)\cdot \mathbf{n}  = & 0, \  (\nabla \rho_1  +  \rho_1 \nabla \phi)\cdot \mathbf{n}=0 , \   (\nabla \rho_2 -  \rho_2 \nabla \phi)\cdot \mathbf{n}=0 ,  \hfill \ \ \   \mathbf{x} \in \partial \Omega_N, \\
 \hfill   \phi^b (\mathbf{x},t) = & (x^2(1-x)^2+z^2(1-z)^2)e^{-t},  \hfill \ \ \   \mathbf{x} \in \partial \Omega_D, \\
\hfill   \rho^b_1 (\mathbf{x},t) = &4x^2(1-x)^2e^{-t} ,   \hfill \ \ \   \mathbf{x} \in \partial \Omega_D,  \\
\hfill     \rho^b_2  (\mathbf{x},t)=& z^2(1-z)^2e^{-t},  \hfill \ \ \   \mathbf{x} \in \partial \Omega_D. 
\end{array}
\right.
\end{equation}

We use $30\times 30\times 30$ cells with $\tau=0.5h$ to compute numerical solutions up to $t=2$. Given in Fig.1 
are the time evolution of numerical solutions (top three rows) and the minimum of $\rho_1, \rho_2$ (bottom row) obtained by the scheme (\ref{fully1C}), showing non-negative approximations for both $\rho_1$ and $\rho_2$. Results obtained by the scheme (\ref{fully2Ci}) are given in Fig.2. Note that the positivity preserving limiter keeps being invoked when we use the scheme (\ref{fully2Ci}). The CPU time (average of 10 simulations) needed for running the schemes (\ref{fully1C}) and (\ref{fully2Ci}) are $207.27$ seconds and $203.15 $ seconds, respectively, from which we see that the second-order scheme is as efficient as the first order scheme.

\begin{figure}[h]
	\caption{Example \ref{2}:  $\rho_1, \rho_2, \phi$ computed by scheme (\ref{fully1C})}
	\centering  
	\subfigure{\includegraphics[width=0.25\linewidth]{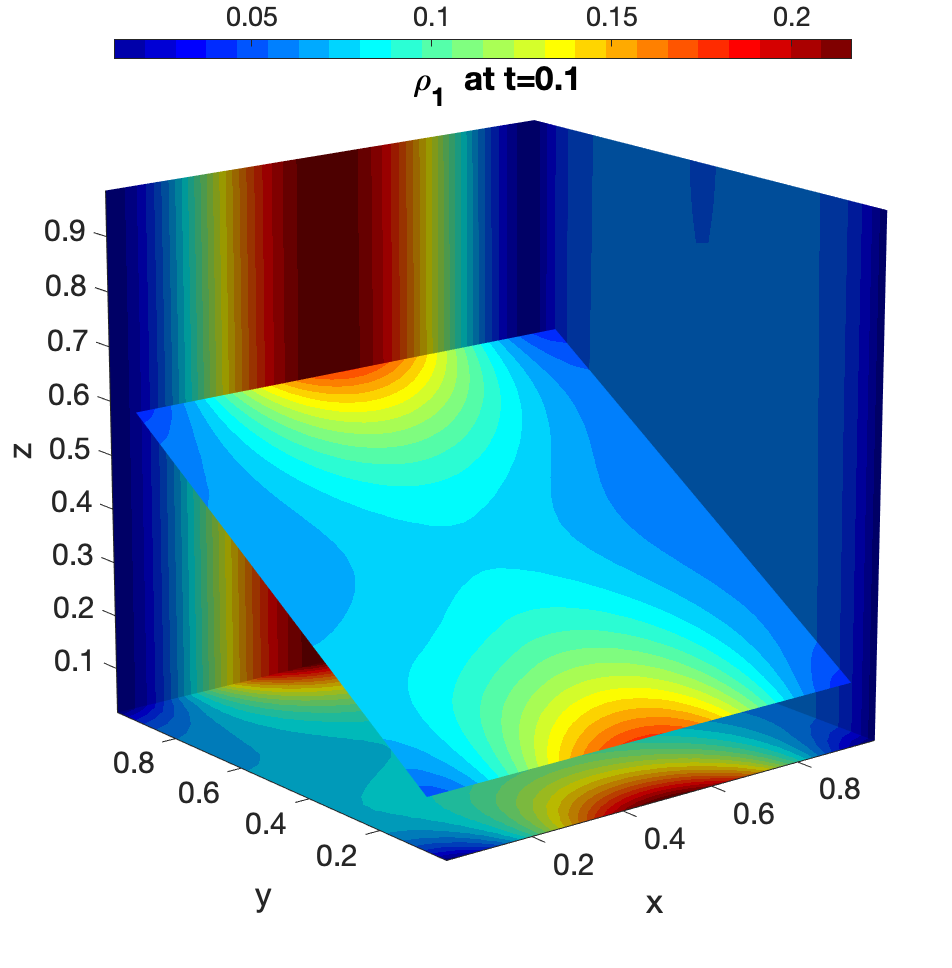}}
	\subfigure{\includegraphics[width=0.25\linewidth]{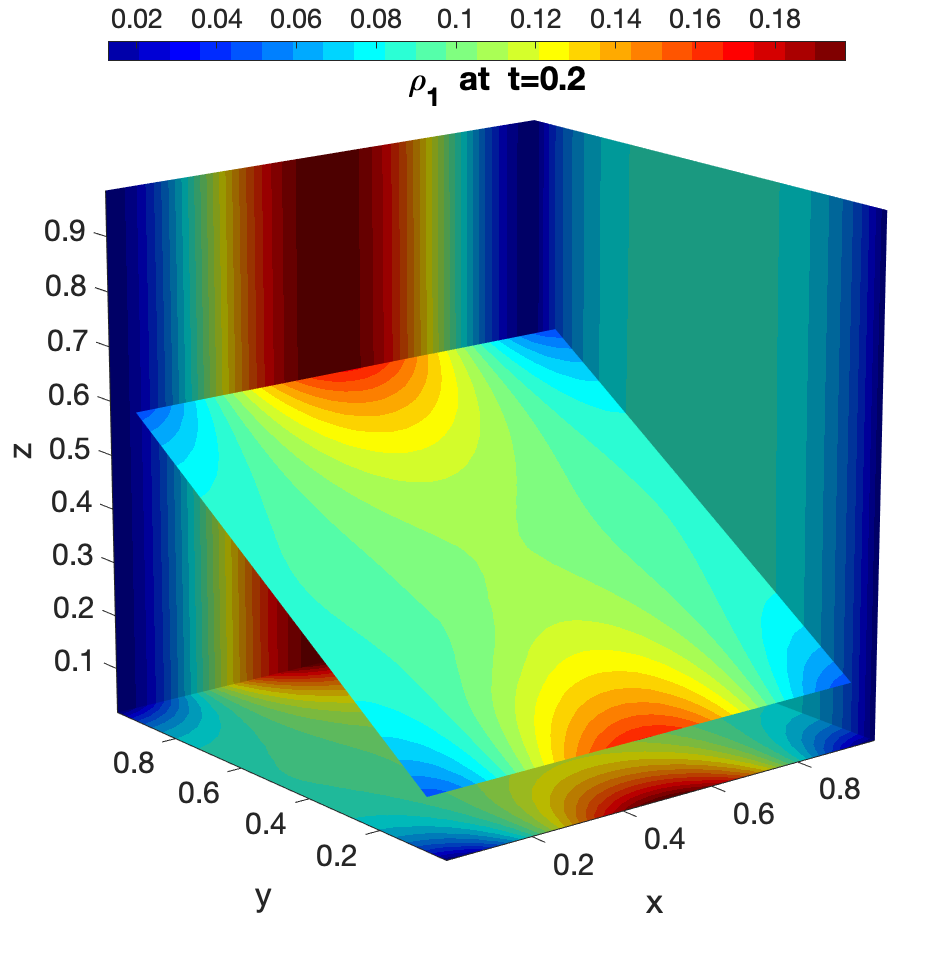}}
	\subfigure{\includegraphics[width=0.25\linewidth]{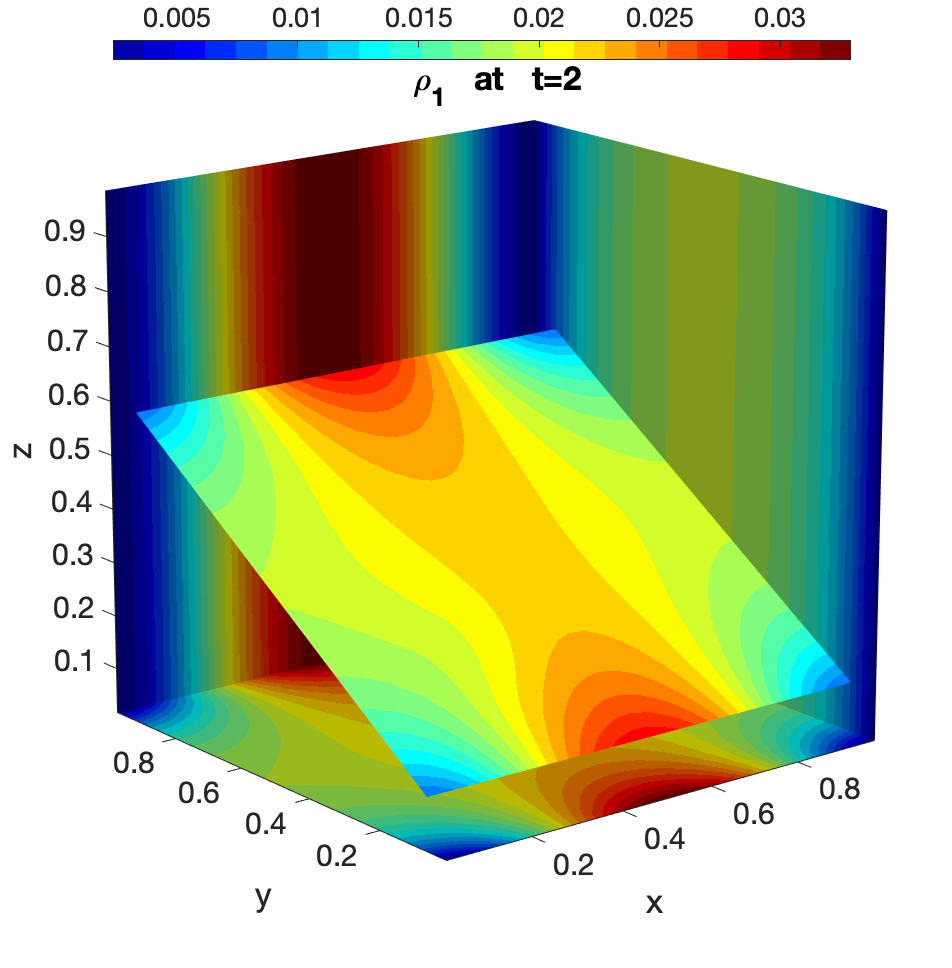}}
	\subfigure{\includegraphics[width=0.25\linewidth]{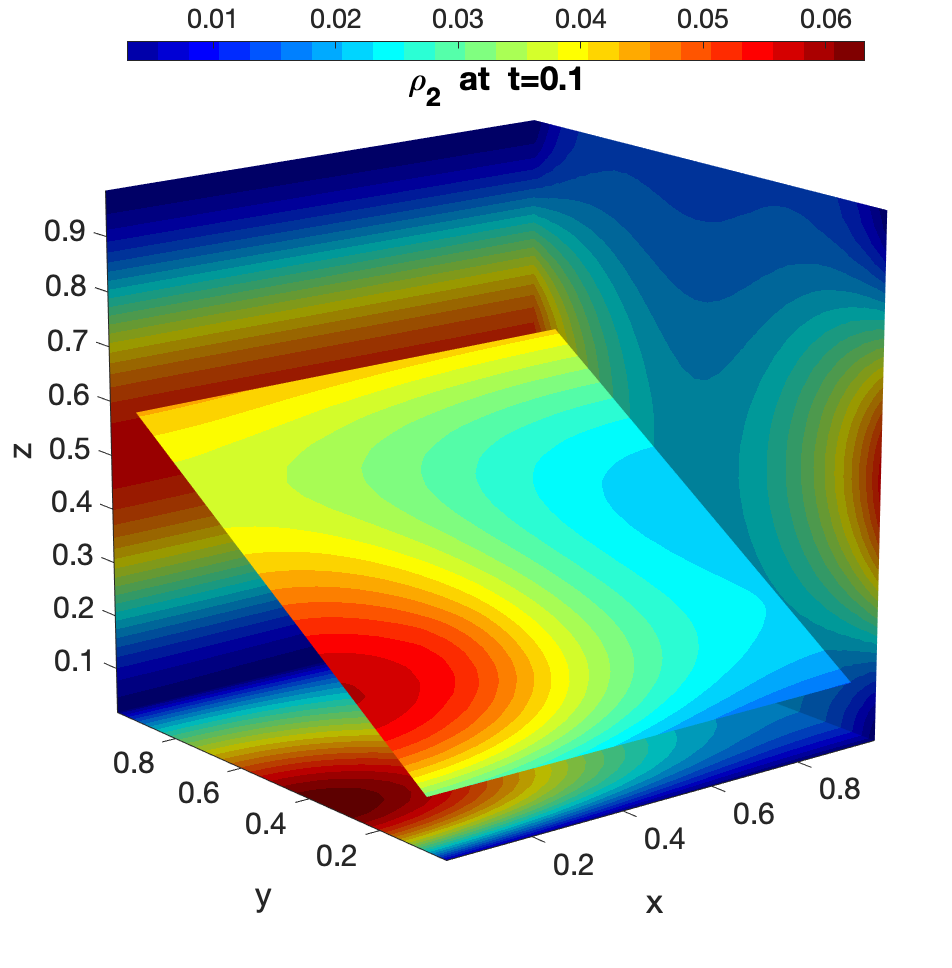}}
	\subfigure{\includegraphics[width=0.25\linewidth]{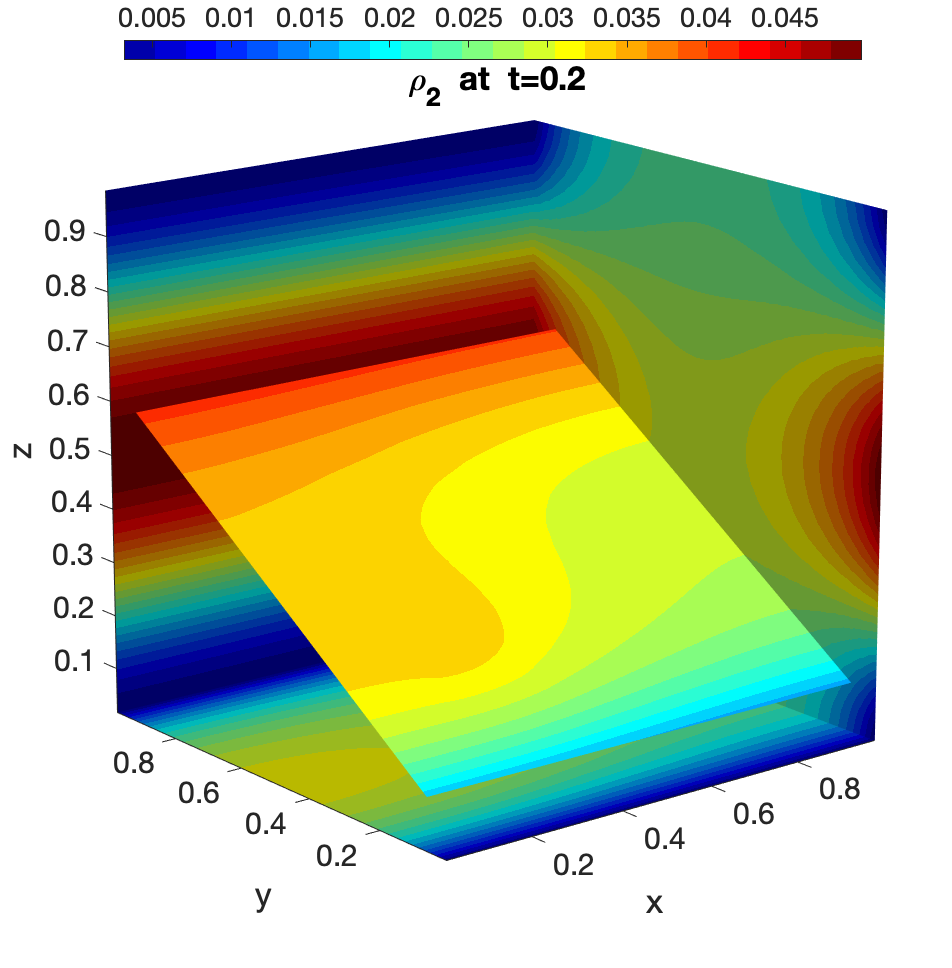}}
	\subfigure{\includegraphics[width=0.25\linewidth]{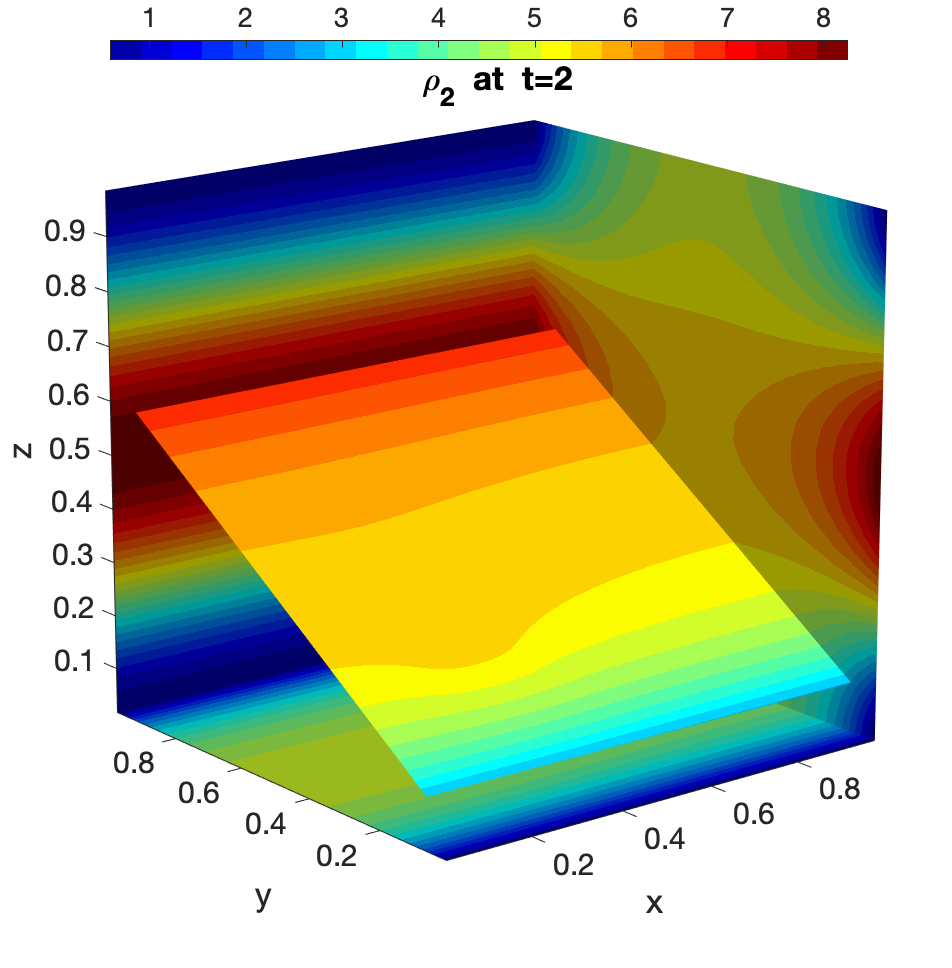}}
	\subfigure{\includegraphics[width=0.25\linewidth]{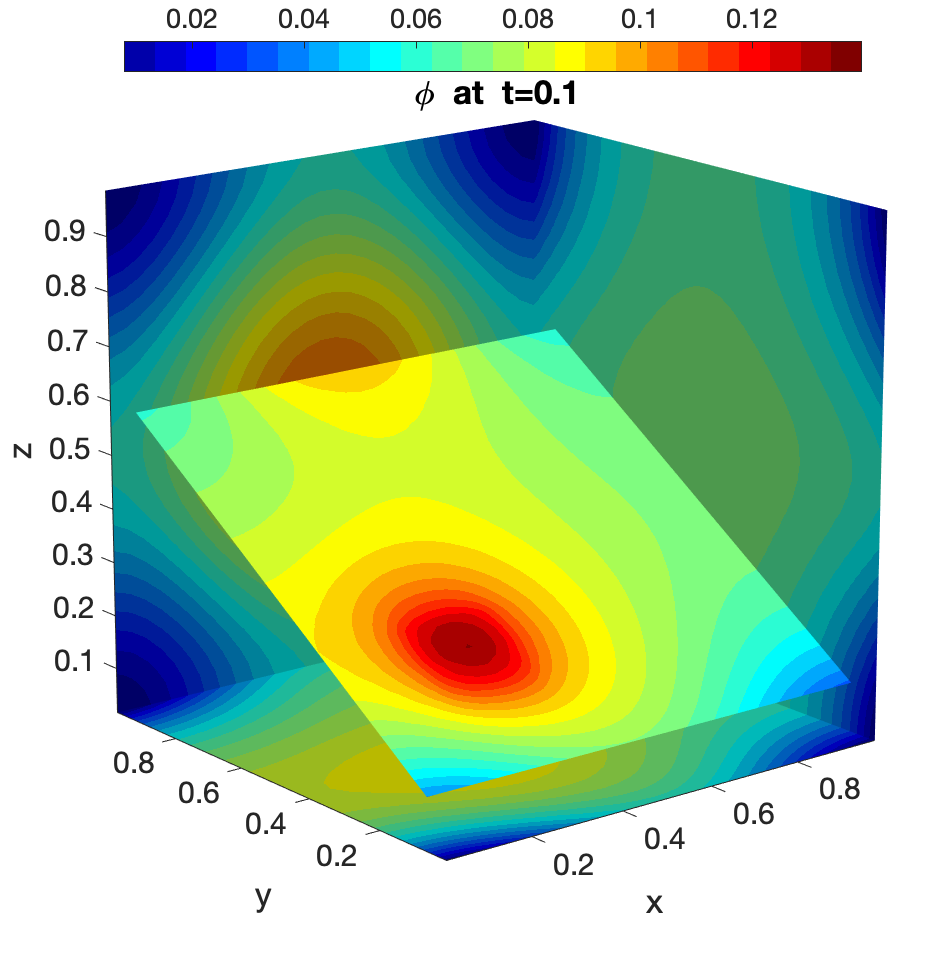}}
	\subfigure{\includegraphics[width=0.25\linewidth]{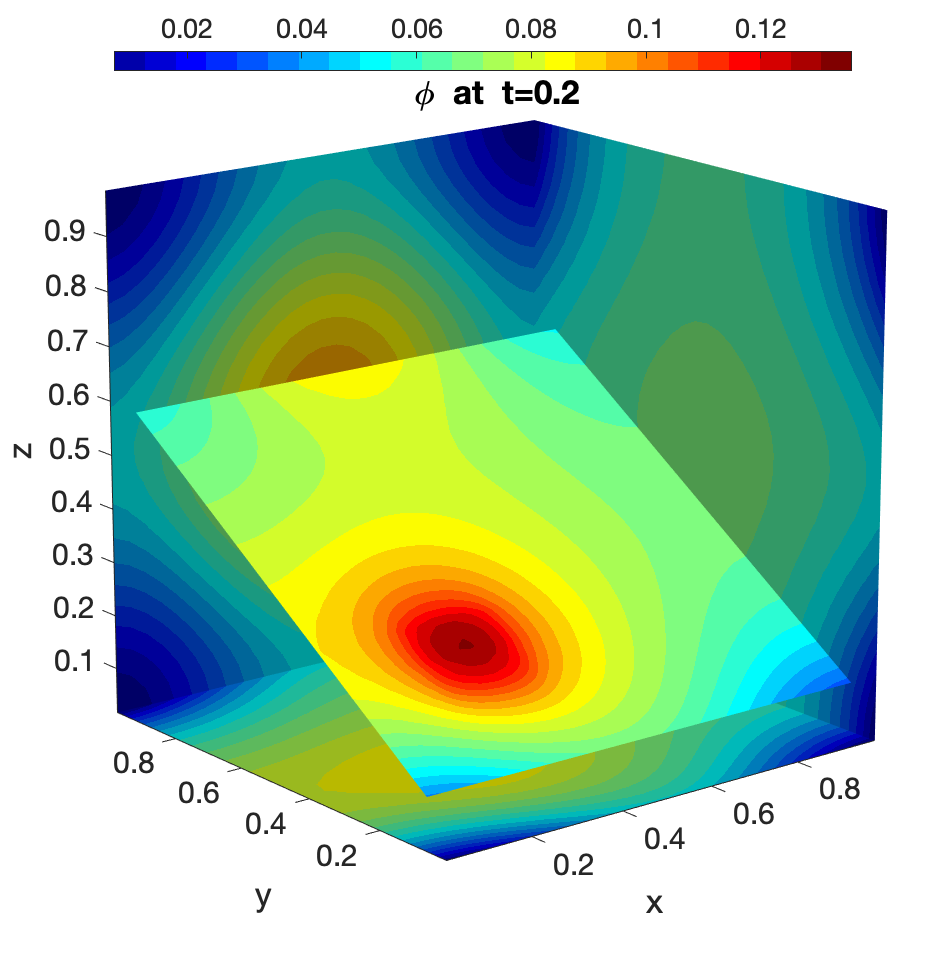}}
	\subfigure{\includegraphics[width=0.25\linewidth]{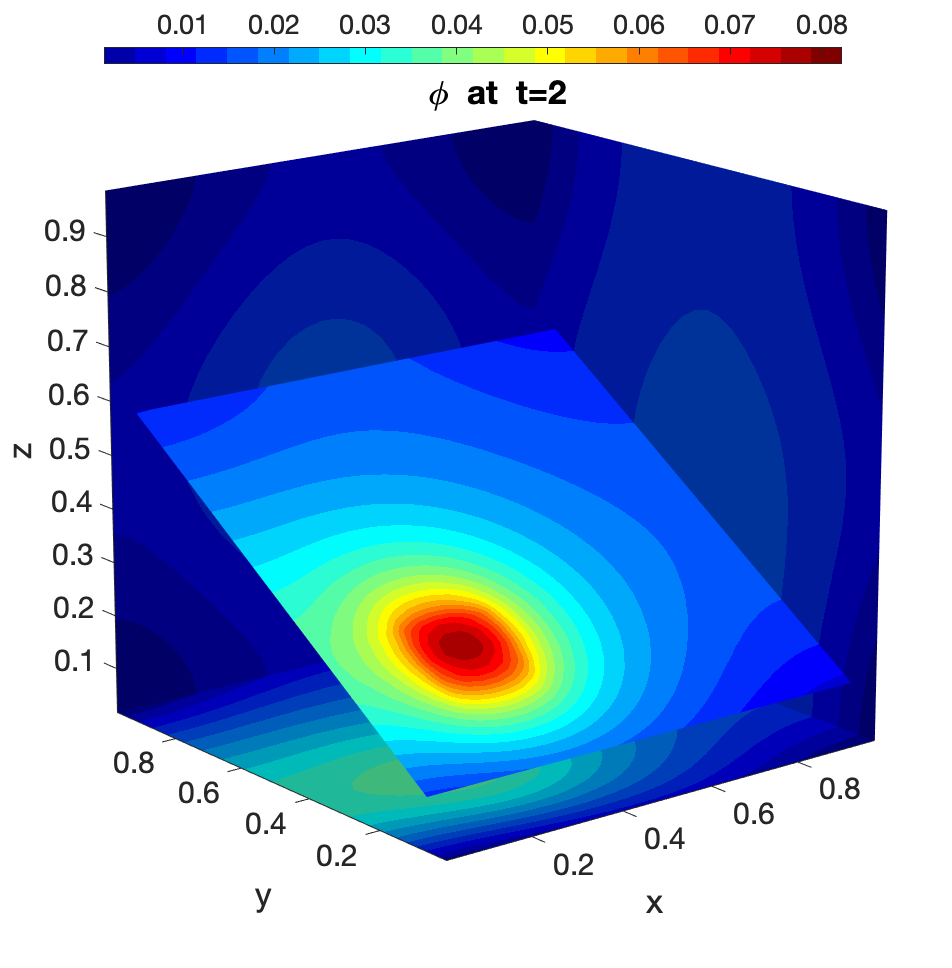}}
	\subfigure{\includegraphics[width=0.3\linewidth]{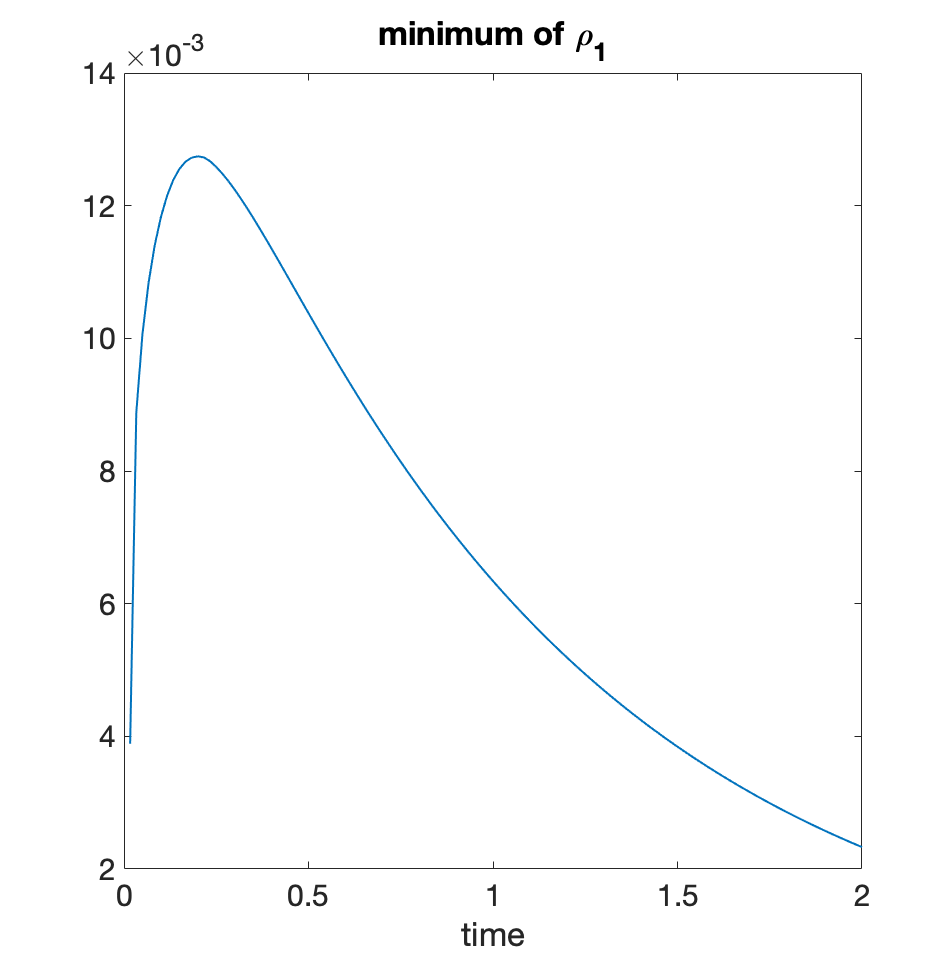}}
	\subfigure{\includegraphics[width=0.3\linewidth]{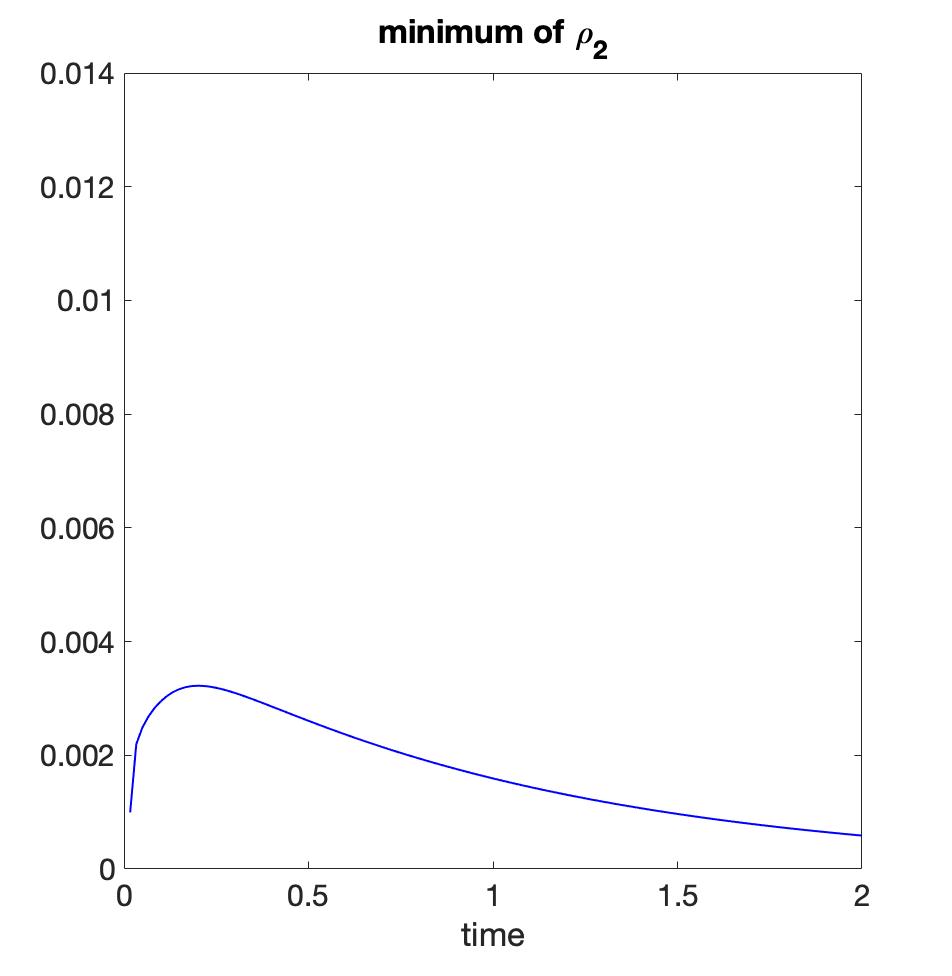}}
\end{figure}

\begin{figure}[h]
	\caption{Example \ref{2}:  $\rho_1, \rho_2, \phi$ computed by scheme (\ref{fully2Ci}) ( with limiter)}
	\centering  
	\subfigure{\includegraphics[width=0.25\linewidth]{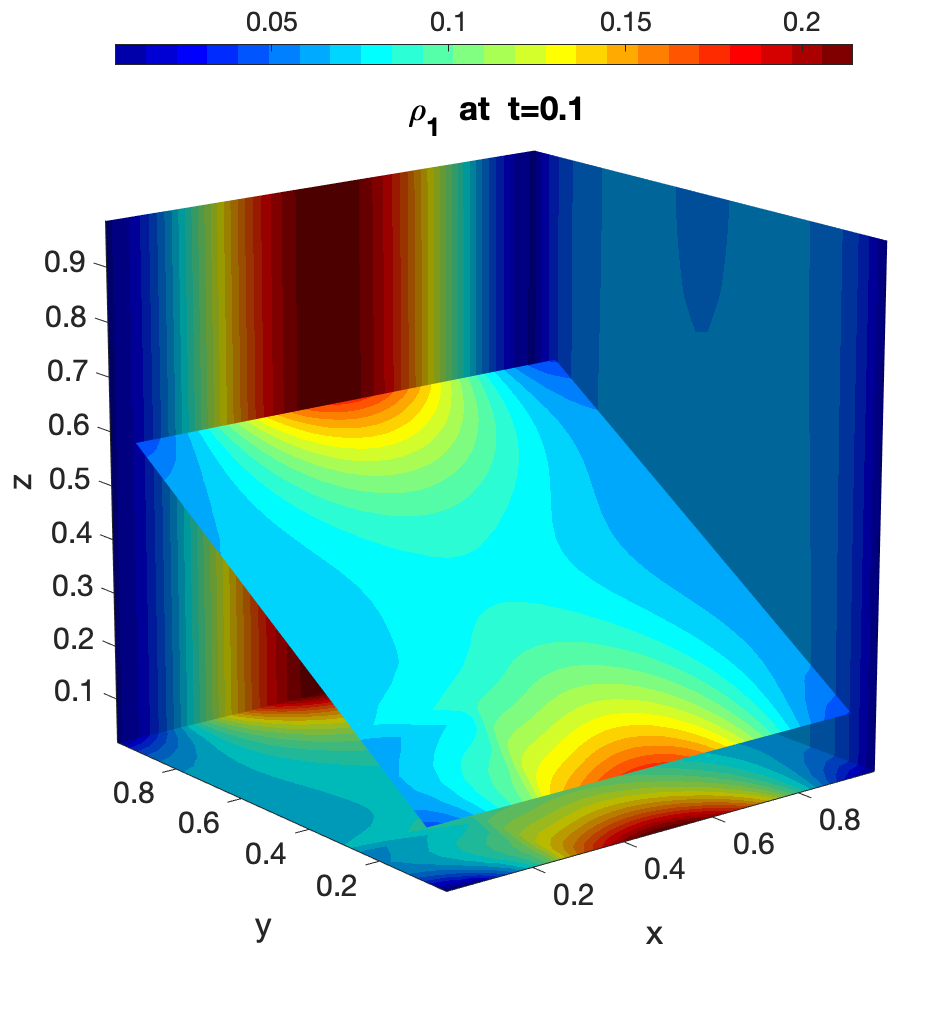}}
	\subfigure{\includegraphics[width=0.25\linewidth]{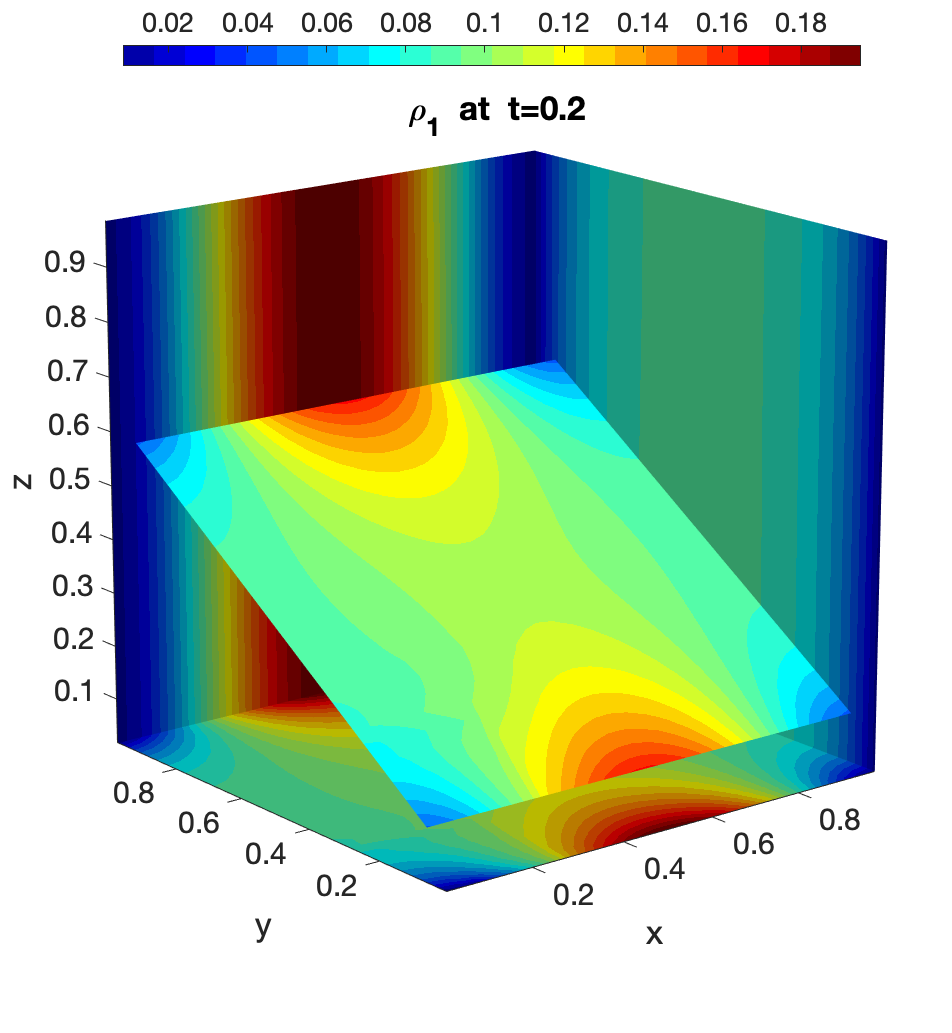}}
	\subfigure{\includegraphics[width=0.25\linewidth]{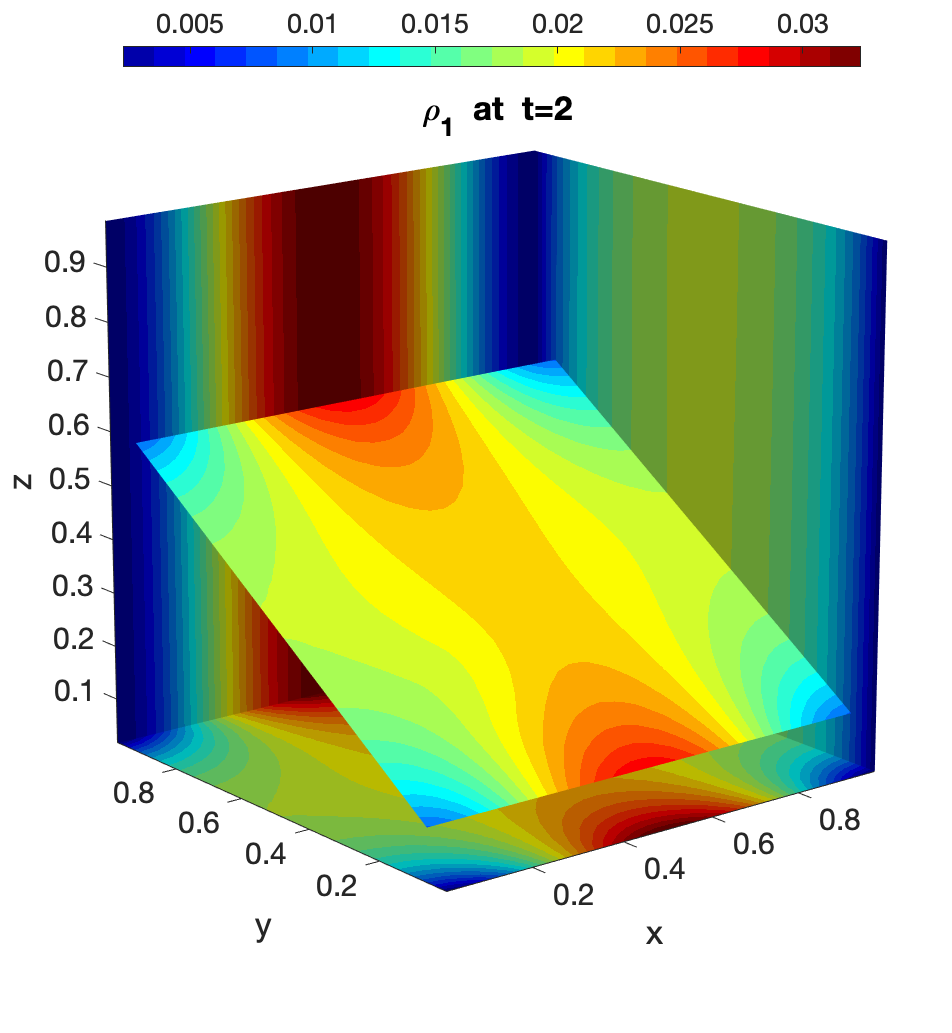}}
	\subfigure{\includegraphics[width=0.25\linewidth]{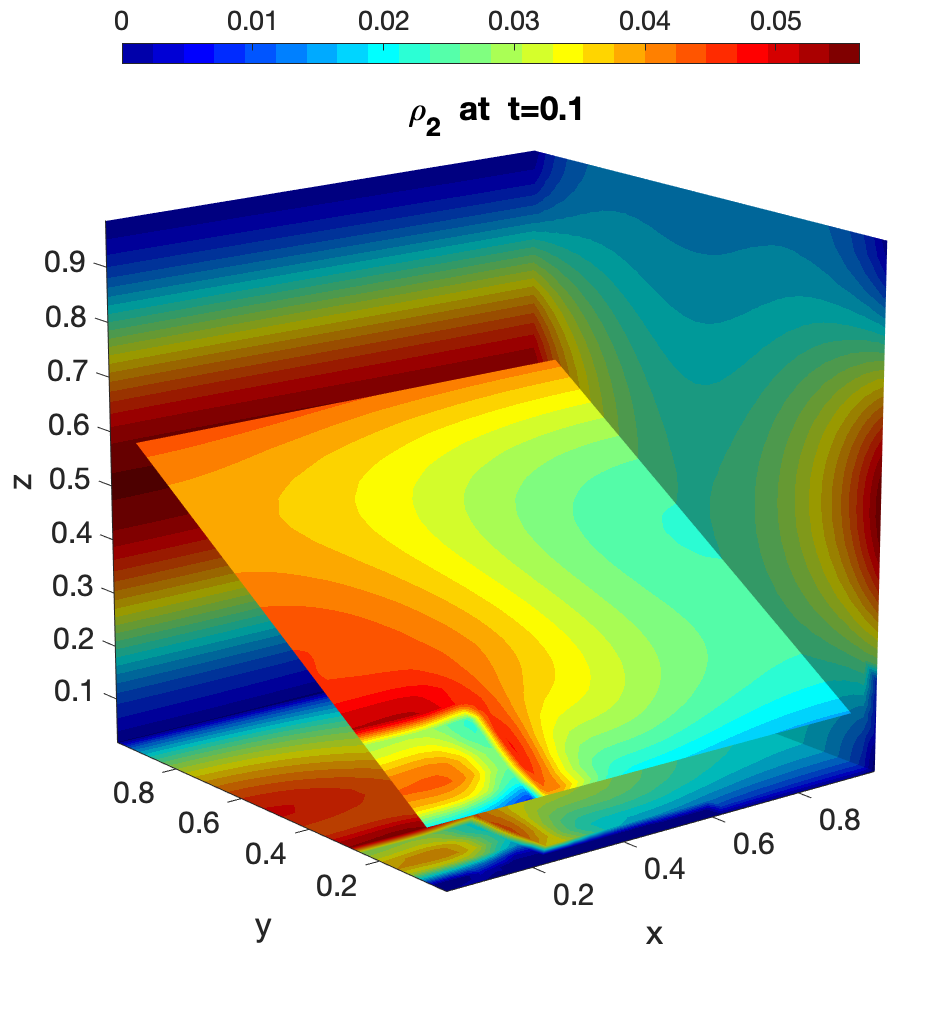}}
	\subfigure{\includegraphics[width=0.25\linewidth]{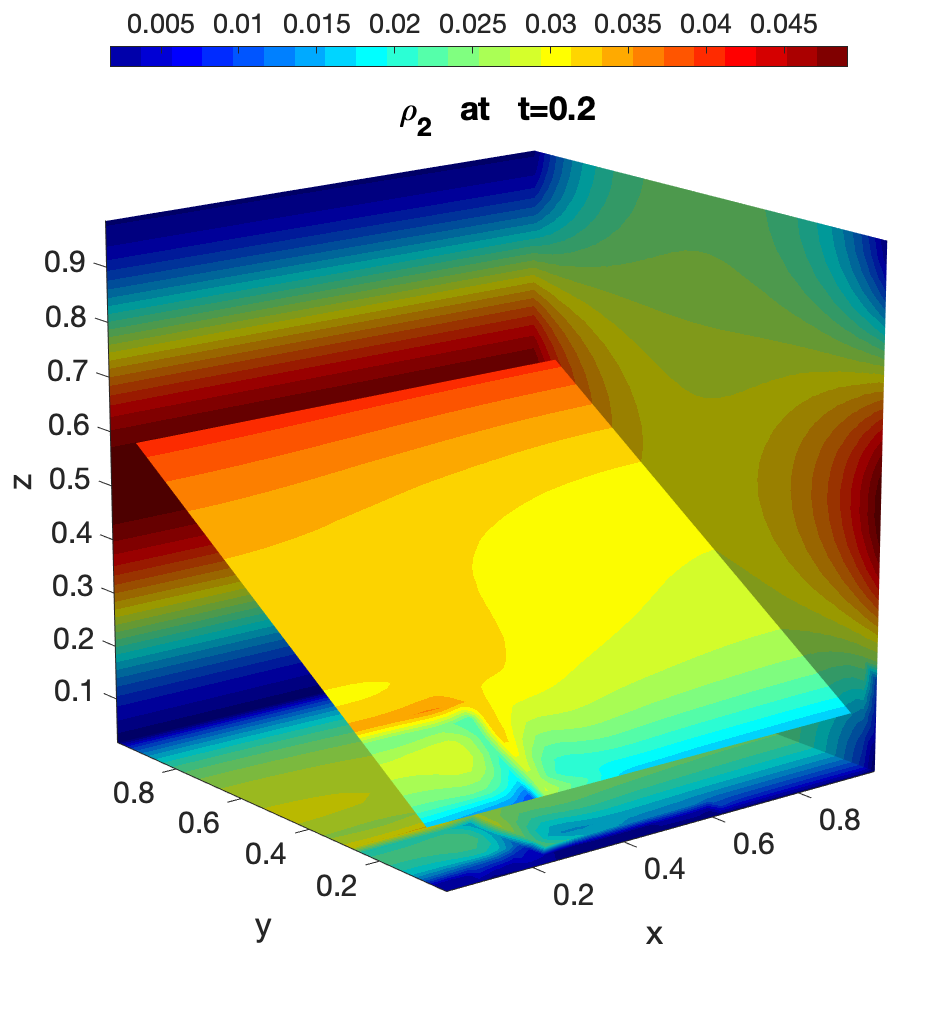}}
	\subfigure{\includegraphics[width=0.25\linewidth]{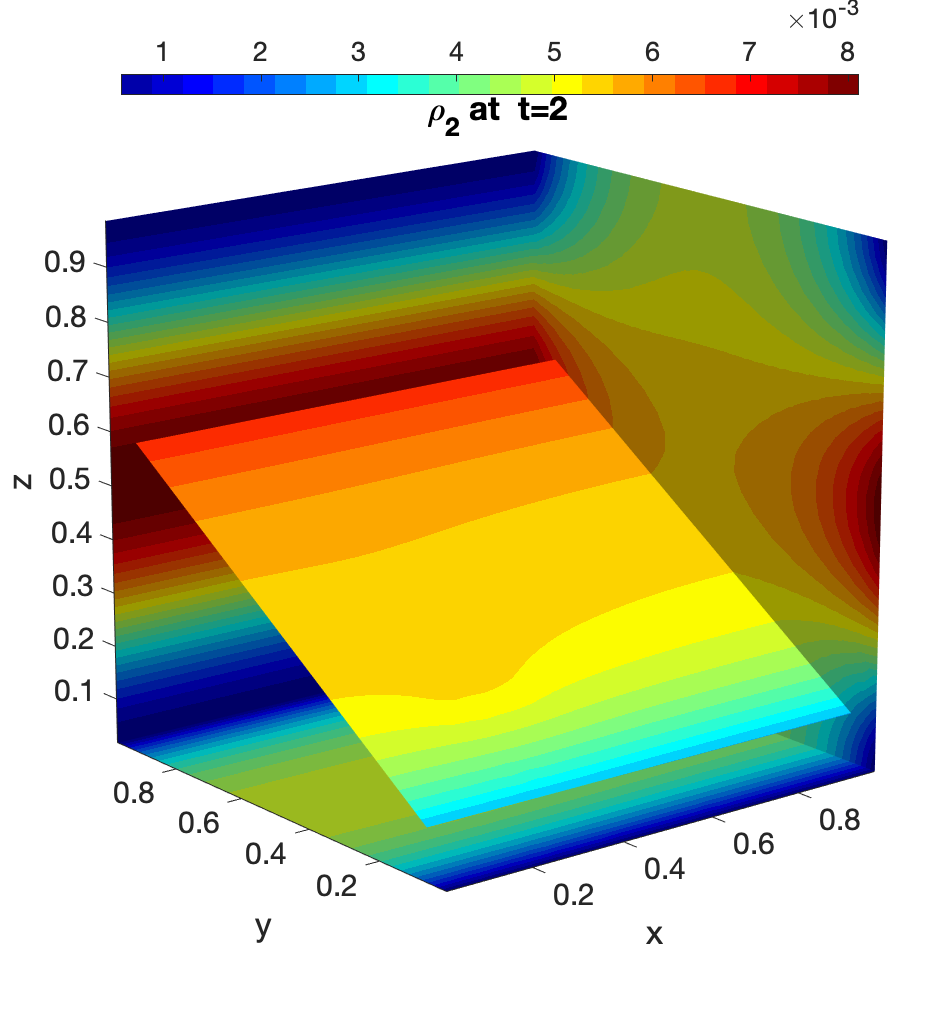}}
	\subfigure{\includegraphics[width=0.25\linewidth]{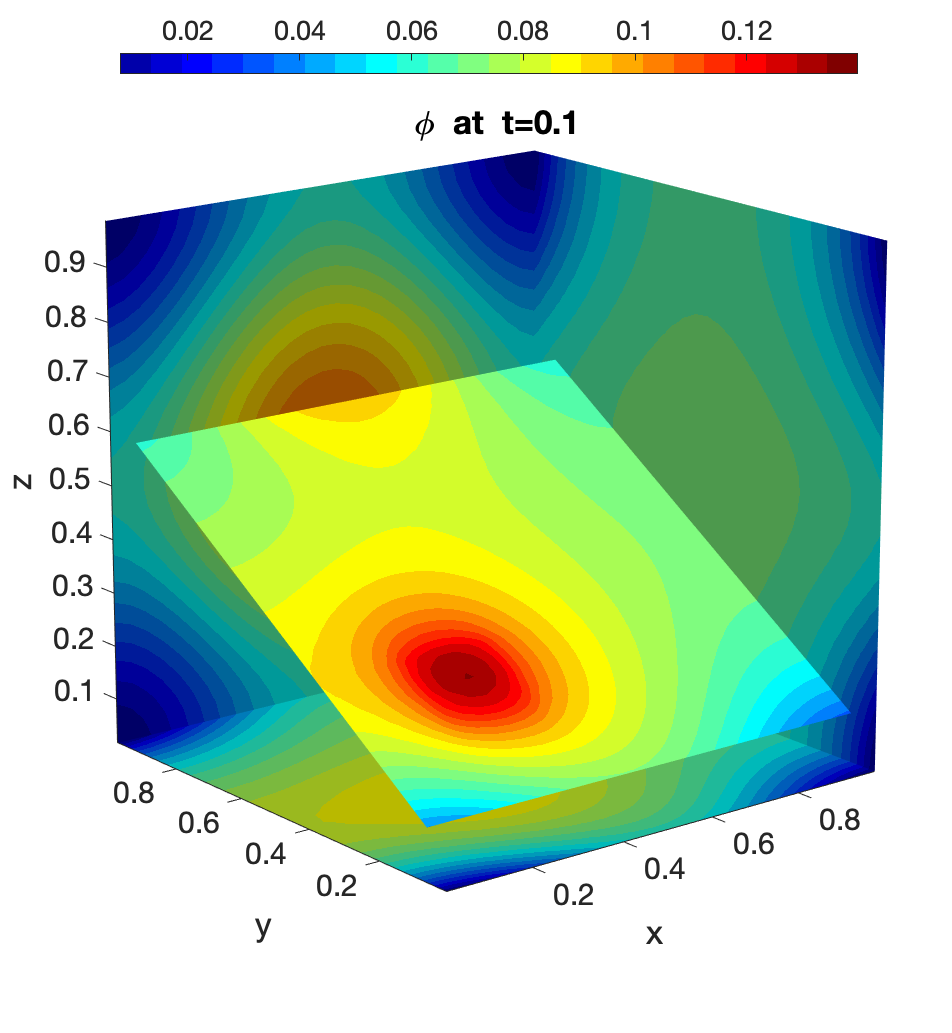}}
	\subfigure{\includegraphics[width=0.25\linewidth]{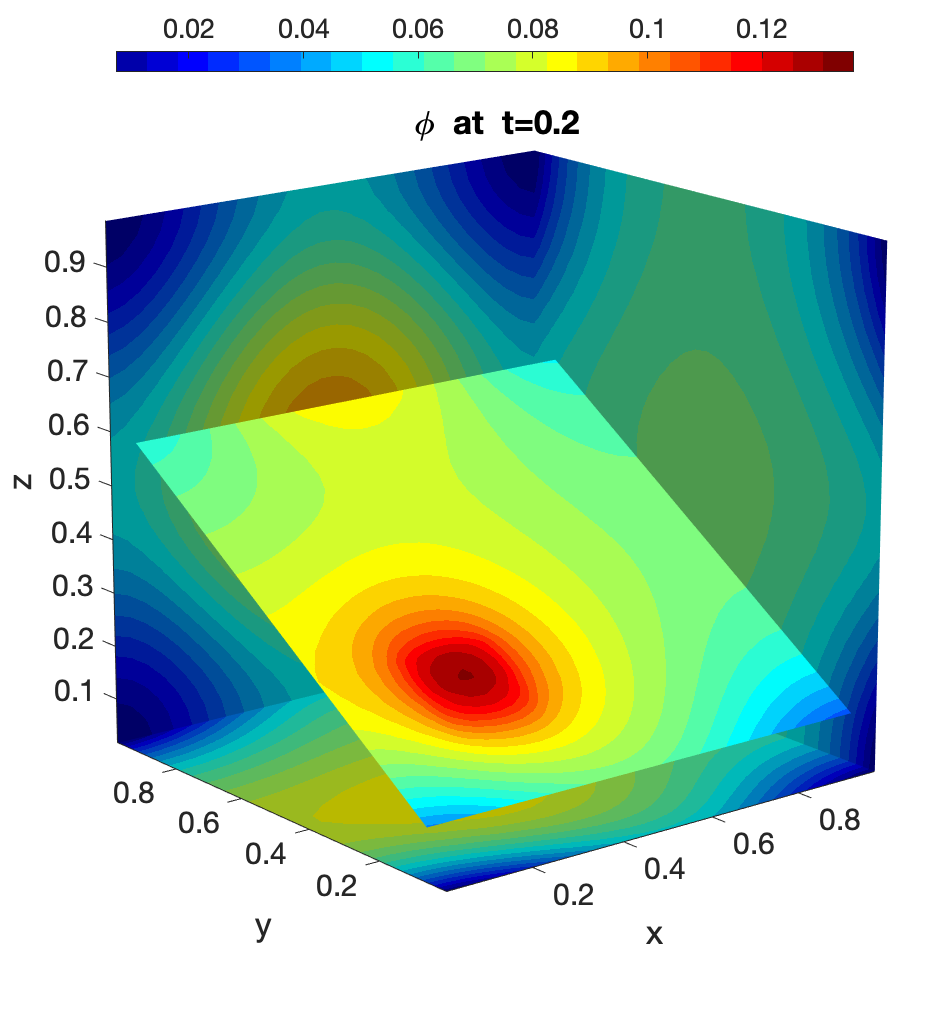}}
	\subfigure{\includegraphics[width=0.25\linewidth]{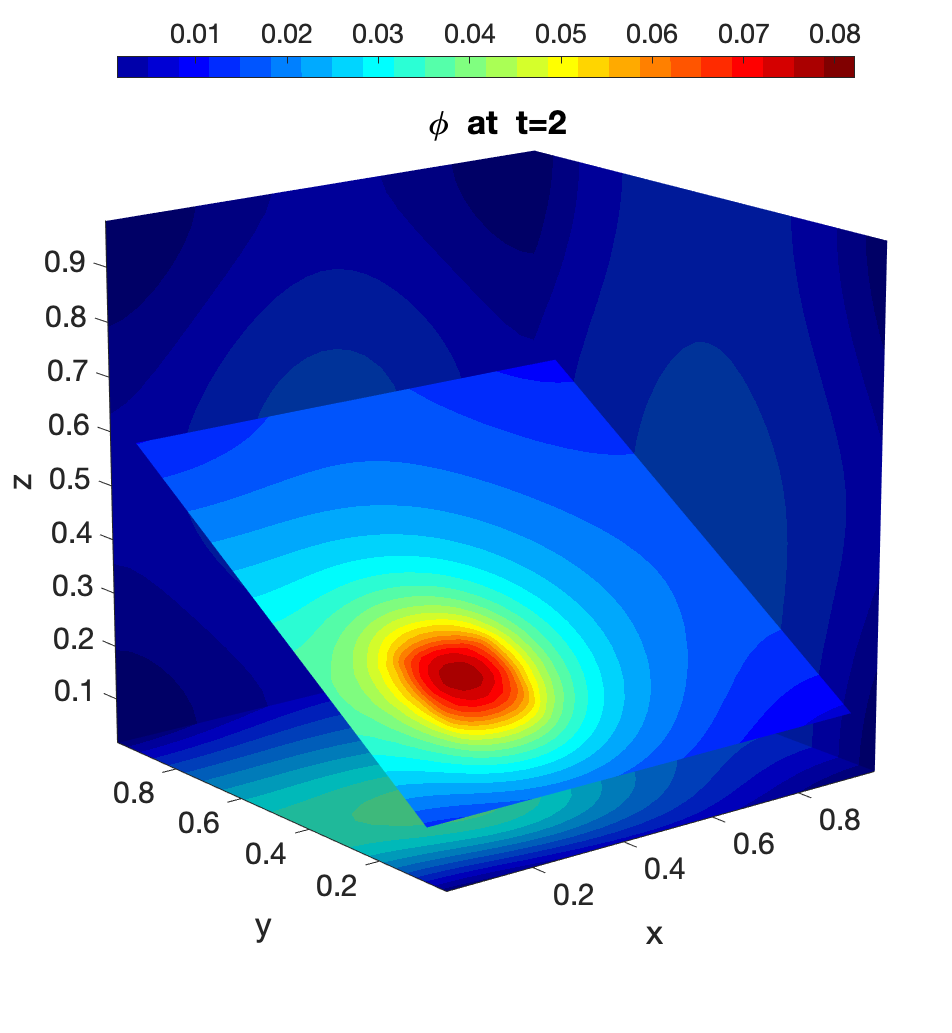}}
	\subfigure{\includegraphics[width=0.3\linewidth]{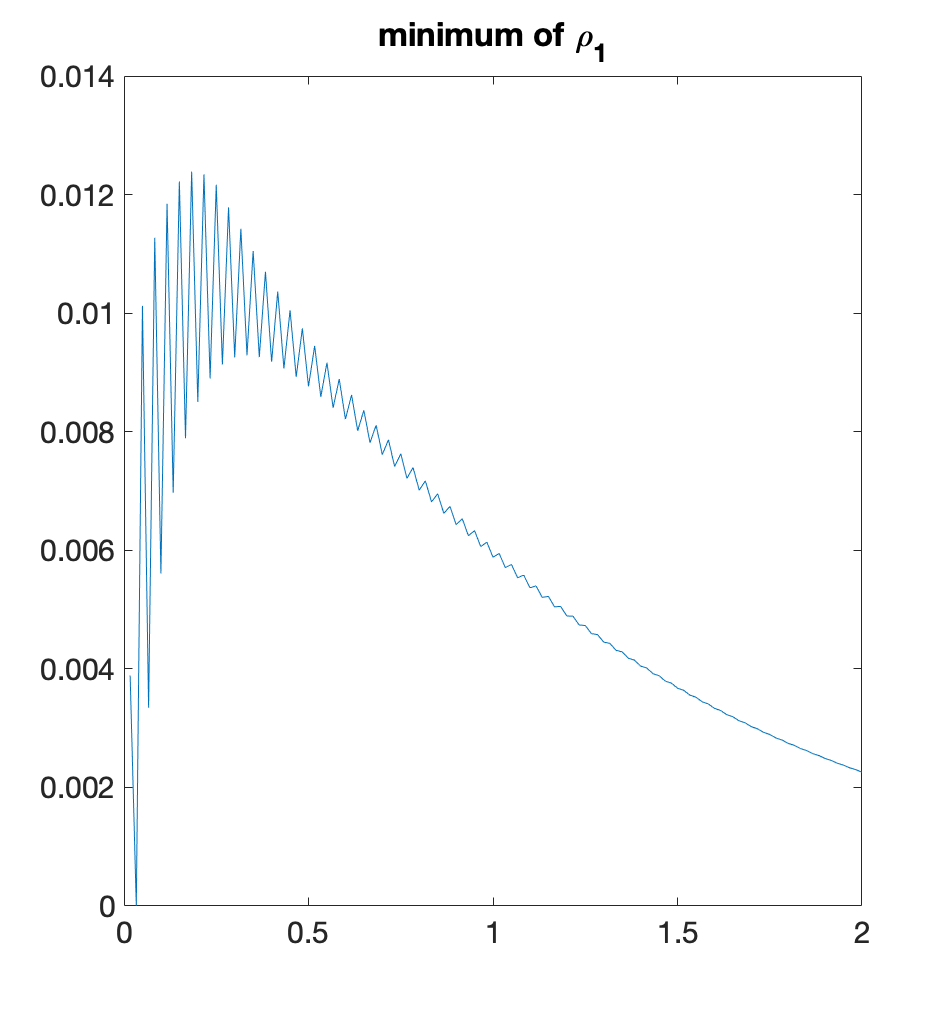}}
	\subfigure{\includegraphics[width=0.3\linewidth]{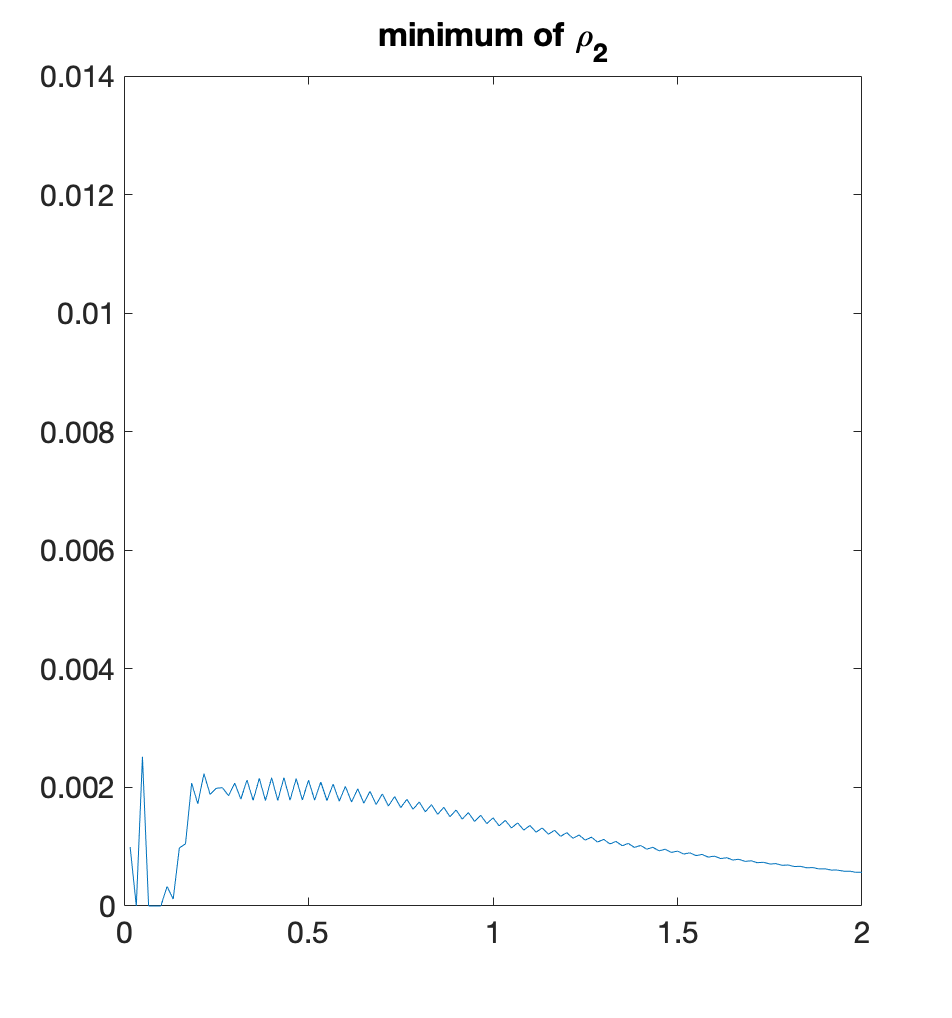}}
\end{figure}
\end{example}

\begin{example}\label{3}(Mass conservation and energy dissipation)
In this numerical example we test both mass conservation and energy dissipation properties of our schemes. 

We consider system (\ref{Pos}) with zero flux boundary conditions:
$$
(\nabla \phi)\cdot \mathbf{n}  =  0, \  (\nabla \rho_1  +  \rho_1 \nabla \phi)\cdot \mathbf{n}=0 , \   (\nabla \rho_2 -  \rho_2 \nabla \phi)\cdot \mathbf{n}=0 ,  \hfill \ \ \   x \in \partial \Omega.
$$
Numerical approximations  of $\rho_1$ and $\rho_2$ at $t=2$ obtained by the scheme  (\ref{fully1C}) are given in Fig.3. We can see by comparing Fig.3  and  Fig.1 that boundary conditions have strong effects 
on the solution profiles. In Fig.4 (left) are the time evolution of the total mass and free energy obtained by the scheme (\ref{fully1C}), the results verify our theoretical findings in Theorem \ref{First-Energy}. In Fig.4 (right) are plots of the free energy and total mass obtained by (\ref{fully2Ci}). In this test the second order scheme looks also energy dissipative and mass conservative. 
\begin{figure}[h]
	\caption{Example \ref{3}:  $\rho_1, \rho_2$ computed by scheme (\ref{fully1C}) }
	\centering  
	\subfigure{\includegraphics[width=0.35\linewidth]{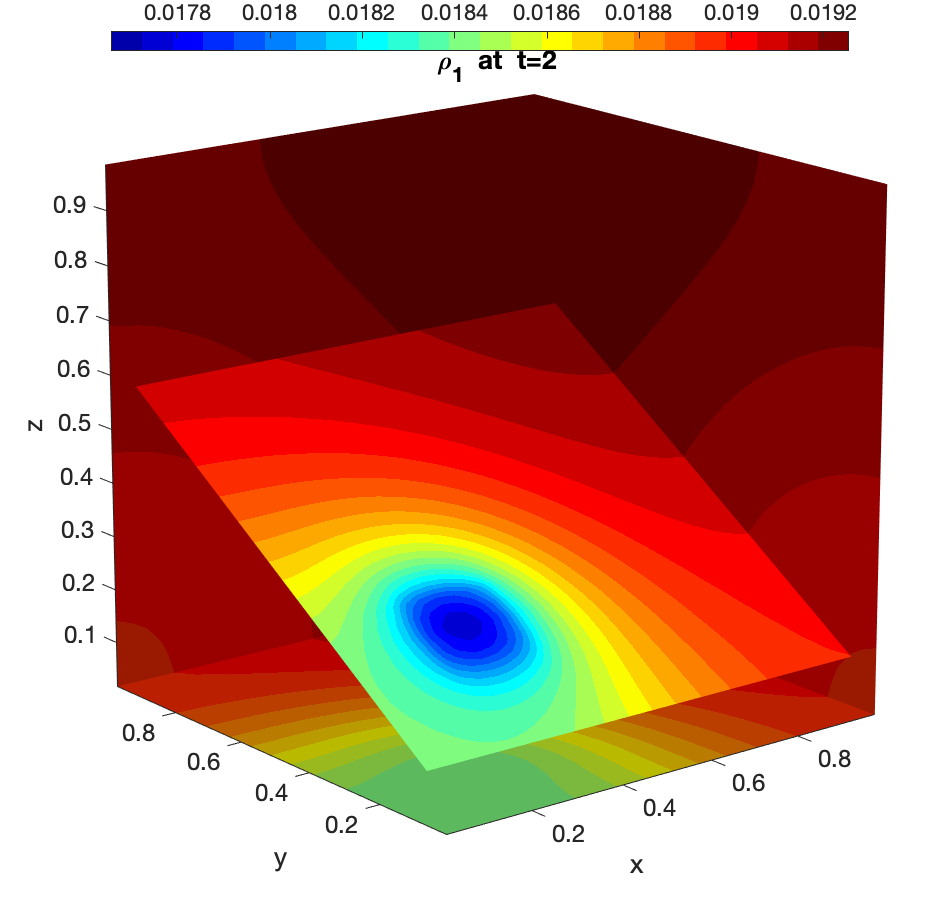}}
	\subfigure{\includegraphics[width=0.35\linewidth]{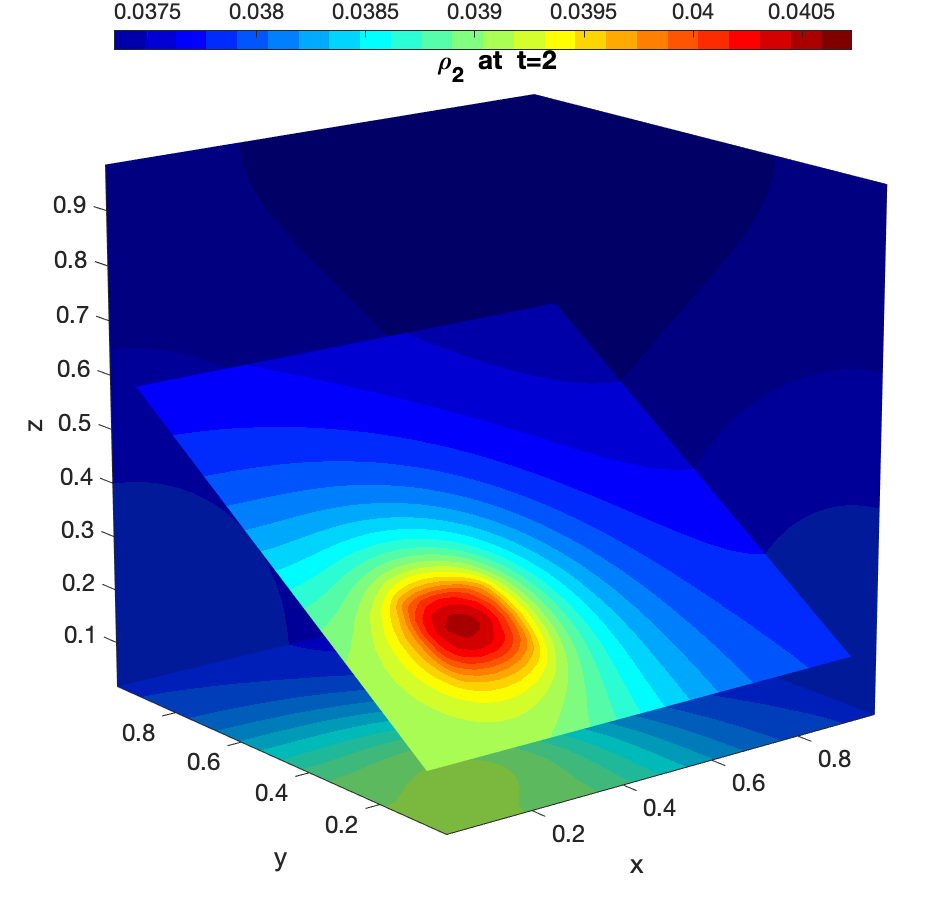}}
	\caption{Example \ref{3}:  Mass conservation and energy dissipation}
	\centering  
	\subfigure{\includegraphics[width=0.35\linewidth]{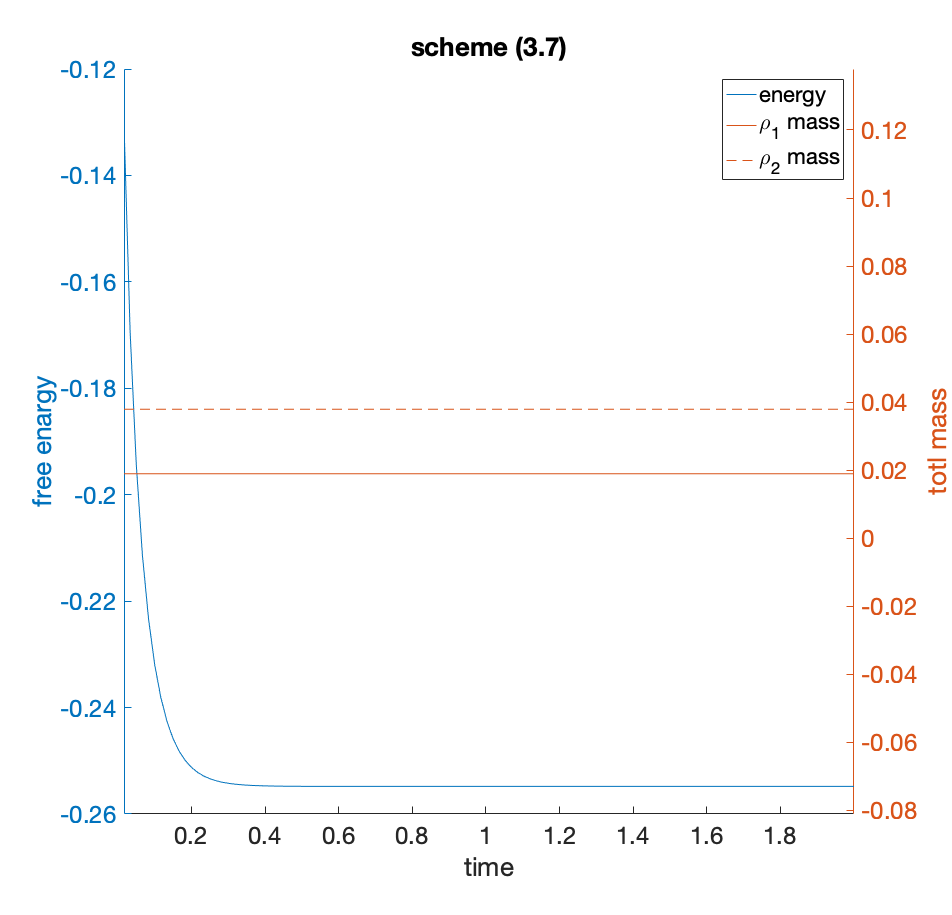}}
	\subfigure{\includegraphics[width=0.35\linewidth]{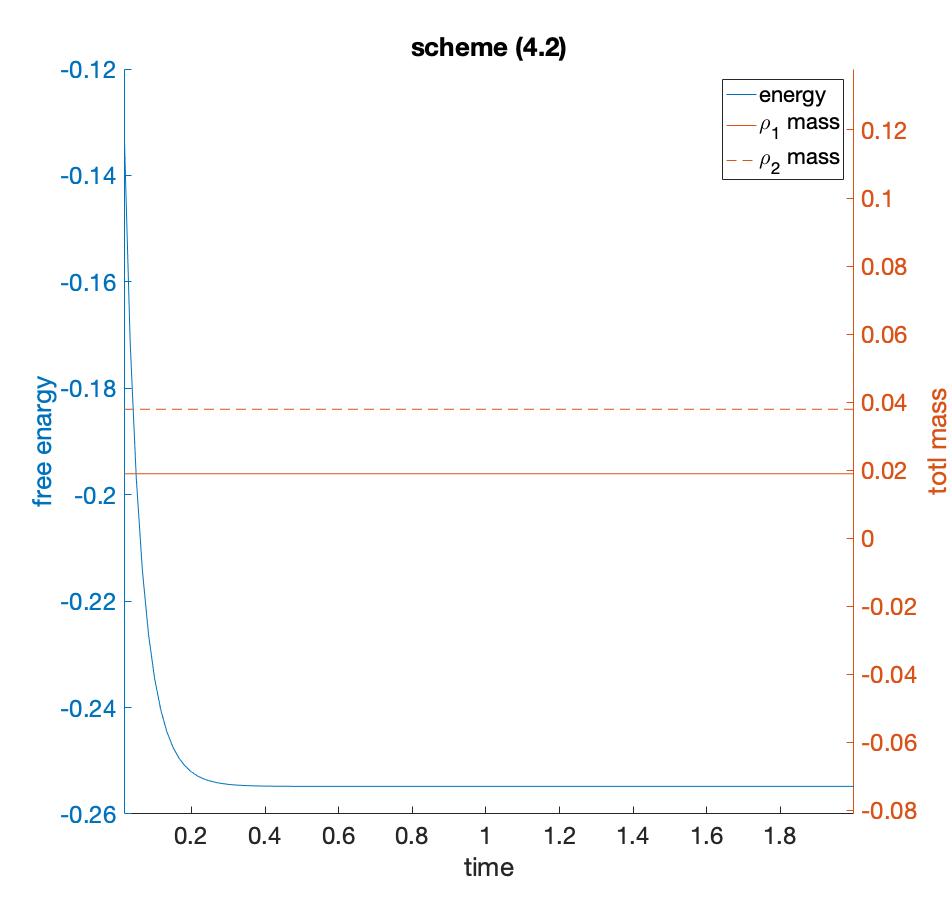}}
\end{figure}
\end{example}

\section{Concluding remarks}\label{sec6}
In this paper, we have developed unconditional structure-preserving schemes for PNP equations in more general settings. These schemes are shown to preserve several important physical laws at the fully discrete level including: mass conservation, solution positivity, and free energy dissipation. The non-logarithmic Landau reformulation of the model is important, enabling us to construct a simple, easy-to-implement fully discrete scheme (first order in time, second order in space), which proved to satisfy all three desired properties of the continuous model with
only $O(1)$ time step restriction. We further designed a second order (in both time and space) scheme, which has the same computational complexity as the first order scheme. For such second order scheme, we employed a local scaling limiter to restore solution positivity where necessary. Moreover, we rigorously proved  that the limiter does not destroy the desired accuracy.  Three-dimensional numerical tests are conducted to evaluate the scheme performance and verify our theoretical findings. 
Our schemes presented with $\mu_i$ given can be applied to complex chemical potentials without difficulty.  

\section*{Acknowledgments}  This research was partially supported by the National Science Foundation under Grant DMS1812666.

\appendix
\section{Proof of Theorem \ref{Semi-WELL}}
\begin{proof}
The elliptic problem (\ref{elliptic1}) can be rewritten in $w= \rho^{n+1}_i e^{\psi^n_i}$ as 
\begin{subequations}\label{EE1}
\begin{align}
\label{EL1}
& e^{-\psi^n_i}w-\tau\nabla \cdot (D_i(x)e^{-\psi^n_i}\nabla w)=\rho^n_i, \\
\label{EL2}
& w=\rho^b_i(x,t_{n+1})e^{\psi^b_i(x,t_n)} , \quad x\in \partial \Omega_D,\\
\label{EL3}
&  (\nabla w)\cdot \mathbf{n}=0, \quad x\in \partial \Omega_N.
\end{align}
\end{subequations}
Let  $\gamma_0$ be the trace operator on $\partial \Omega_D$. The above problem admits a variational formulation of form 
\begin{align}\label{BL}
B[u, v]=Lv, \quad u, v \in H,
\end{align}
where for a Dirichlet lift $G \in H^2(\Omega)$ with trace  $\gamma_0(G)=\rho^b_i(x,t_{n+1})e^{\psi^b_i(x,t_n)}$, 
we find 
$$
w=u+G.
$$
Here 
\begin{subequations}\label{VBL1}
\begin{align}
\label{V1}
& H=\{ v\in H^1(\Omega) : \gamma_0(v)=0 \text{ on } \partial \Omega_{D} \}, \\
\label{B1}
& B[u,v]=\int_{\Omega} (\tau D_i(x)e^{-\psi^n_i} \nabla u\cdot \nabla v+e^{-\psi^n_i} u v)dx,\\
\label{L1}
& Lv=\int_{\Omega} (\rho^n_i-e^{-\psi^n_i}G)v - \tau D_i(x)e^{-\psi^n_i}\nabla G\cdot \nabla vdx.
\end{align}
\end{subequations}
Under the assumptions, the celebrated Lax-Milgram theorem (\cite{GT97} Theorem 5.8) ensures that the variational problem (\ref{BL}) admits a unique solution $u\in H$. We thus obtain 
$$
 \rho^{n+1}_i =e^{-\psi^n_i} (u+G).
$$ 
Regularity for $\rho_i^{n+1}$ follows from the classical elliptic regularity for $u$.  

Similarly,  the variational problem for (\ref{EE2}) can also be written as (\ref{BL})
with 
\begin{subequations}\label{VBL1}
\begin{align}
\label{B2}
& B[u,v]=\int_{\Omega} \epsilon(x) \nabla u \cdot \nabla v dx,\\
\label{L2}
& Lv=\int_{\Omega} 4\pi\left( f(x)+\sum_{i=1}^mq_i\rho^{n+1}_i\right)v - 
\epsilon(x) \nabla G\cdot \nabla v dx,
\end{align}
\end{subequations}
where the Dirichlet lift $G \in H^2(\Omega)$ with  $\gamma_0(G)=\phi^b(x,t_{n+1}) \text{ on } x\in \partial \Omega_D.$ 
 Here one can use the Poincar\'{e}-Friedrichs' inequality of form 
 $\|u\|_{L^2} \leq C_F \|\nabla u\|_{L^2}$, which holds if $u=0$ on a set of 
 $\partial \Omega$ with non-vanishing measure, to regain coercivity of $B$ on $H$.  The variational problem is thus well-posed, and we obtain
$$
\phi^{n+1}=u+G. 
$$
Regularity for $\phi^{n+1}$ follows from the classical elliptic regularity for $u$ and regularity for $\rho^{n+1}$. 

If $\partial \Omega_D = \emptyset$,  then $B[u, 1]=0$ requires the compatibility condition for the source 
$$
\int_{\Omega} \left( f(x)+\sum_{i=1}^m q_i\rho^{n+1}_i\right)dx=0.
$$
Due to conservation of mass,  this can be ensured by 
$$
\int_{\Omega} \left( f(x)+\sum_{i=1}^m q_i\rho^{\rm in}_i\right)dx=0.
$$
With such compatibility condition  the solution of this variational formulation exists but is not unique. In such case one can replace $H$ by 
$$H_*=\left\{v\in H^1, \quad \int_{\Omega} v dx=0 \right\},
$$ 
then by the Poincar\'{e}-Wirtinger  inequality, $B$ is actually $H_*$-
coercive. The new variational problem hence admits a unique solution and is well-posed.    

{
Finally we prove positivity of $\rho^{n+1}$ if $\rho^n \geq 0$.  Since $w=\rho_i^{n+1}e^{\psi^n_i} \in C(\bar \Omega)\cap C^2(\Omega)$, we let $x^*=\argmin_{x\in \bar \Omega}w(x)$, and distinct three cases: 

(i) If $x^*\in \partial \Omega_D$, then 
$$
w(x)\geq w(x^*)=\rho^b_i(x^*,t_{n+1})e^{\psi^b_i(x^*,t_n)} \geq 0, \quad x\in \bar \Omega.
$$

(ii) If $x^*\in \Omega$, then we can show that 
$$
w(x) \geq w(x^*)\geq \rho^n_i(x^*)e^{\psi^n_i(x^*)}\geq 0, \quad x\in \bar \Omega.
$$
In fact, from (\ref{EL1}) it follows 
\begin{align*}
\rho^n_i(x)=& e^{-\psi^n_i(x)}w(x)-\tau\nabla \cdot (D_i(x)e^{-\psi^n_i(x)}\nabla w(x))\\
=& e^{-\psi^n_i(x)}w(x)-\tau\nabla (D_i(x)e^{-\psi^n_i(x)}) \cdot \nabla w(x)-\tau D_i(x)e^{-\psi^n_i(x)}  \Delta w(x).
\end{align*}
This when evaluated at $x^*$, using  $\nabla w(x^*)=0$ and $\Delta w(x^*) \geq 0$, gives  
$$
\rho^n_i(x^*) \leq e^{-\psi^n_i(x^*)}w(x^*). 
$$
(iii) For $x^*\in \partial\Omega_N$. If $w(x^*)\geq 0$, the proof is complete. We proceed with the case that  
$$
w(x^*)<0,\quad  x^*\in \partial\Omega_N.
$$
This is possible by the Hopf strong minimum principle. 

Define the differential operator 
$$
L\xi :=\tau D_i(x)e^{-\psi^n_i(x)}\Delta \xi +\tau\nabla (D_i(x)e^{-\psi^n_i(x)})\cdot \nabla \xi -e^{-\psi^n_i(x)} \xi.
$$
We then have $Lw=-\rho_i^n(x) \leq 0$, and $w(x) \geq w(x^*)$ for all $x\in \Omega$. These together with  $w(x^*)<0$ allow us to apply Theorem 8 in \cite{PW67} to conclude 
 $(\nabla w(x^*))\cdot \mathbf{n}<0$. This is a contradiction. \\
Collecting all three cases, we have $w(x)\geq 0$ for all $x\in \bar\Omega$. 
}
\end{proof}

\section{Proof of Theorem \ref{First-Energy}.}
\begin{proof}
(i) For fixed $i$ we sum (\ref{fully1C}) over all cells to get
$$
\sum_{\alpha\in \mathcal{A}} |K_{\alpha}| (\rho^{n+1}_{i,\alpha} -\rho^n_{i,\alpha} ) = \tau \sum_{j=1}^d  \sum_{\alpha\in \mathcal{A}}  \frac{|K_{\alpha}|}{h_j}(C_{i,\alpha+e_j/2}-C_{i,\alpha -e_j/2})=0,
$$
where we used summation by parts and $C_{i, \alpha+e_j/2}=0$ for $x_{\alpha+e_j/2}\in \partial \Omega$.\\
(ii) Set 
$$S^{n}_{\alpha}=f_{\alpha}+\sum_{i=1}^m q_i\rho_{i,\alpha}^n$$ 
and 
$$
\psi^*_{i, \alpha}=\log \rho^{n+1}_{i, \alpha}+\frac{1}{k_{B}T}q_i\phi^n_{\alpha}+\frac{1}{k_BT}\mu_{i,\alpha}.
$$  
Using (\ref{energy2}) we find that 
\begin{equation}\label{E1}
\begin{aligned}
E^{n+1}_h-E^n_h=&   \sum_{\alpha \in \mathcal{A}} \sum_{i=1}^m |K_{\alpha}|\left( (\rho^{n+1}_{i,\alpha}-\rho^n_{i,\alpha})\psi^*_{i, \alpha} + \rho^n_{i,\alpha}\log \frac{\rho^{n+1}_{i,\alpha}}{\rho^n_{\alpha}}\right)\\
& + \frac{1}{k_{B}T} \sum_{\alpha \in \mathcal{A}} |K_{\alpha}| \left(\frac{1}{2}S^{n+1}_{\alpha}\phi^{n+1}_{\alpha}+ \frac{1}{2}S^{n}_{\alpha}\phi^{n}_{\alpha} - S^{n+1}_{\alpha}\phi^{n}_{\alpha}\right).
\end{aligned}
\end{equation}
Using $\log X\leq  X -1$ for $X>0$ and the mass conservation,  
 we have  
$$
\sum_{\alpha \in \mathcal{A}}  |K_{\alpha}| \rho^n_{i,\alpha}\log \frac{\rho^{n+1}_{i,\alpha}}{\rho^n_{\alpha}}\leq 
\sum_{\alpha \in \mathcal{A}}  |K_{\alpha}|(\rho^{n+1}_{i,\alpha}-\rho^n_{\alpha})=0.
$$
Also one can verify that 
 $$
 \sum_{\alpha\in \mathcal{A}} |K_{\alpha}|S^{n+1}_{\alpha}\phi^n_{\alpha}=\sum_{\alpha\in \mathcal{A}} |K_{\alpha}|S^{n}_{\alpha}\phi^{n+1}_{\alpha},$$
with which we obtain 
$$
\sum_{\alpha \in \mathcal{A}} |K_{\alpha}| \left(\frac{1}{2}S^{n+1}_{\alpha}\phi^{n+1}_{\alpha}+ \frac{1}{2}S^{n}_{\alpha}\phi^{n}_{\alpha} - S^{n+1}_{\alpha}\phi^{n}_{\alpha}\right)=
\frac{1}{2}\sum_{\alpha \in \mathcal{A}} |K_{\alpha}| (S^{n+1}_{\alpha}-S^n_{\alpha})(\phi^{n+1}_{\alpha}-\phi^{n}_{\alpha}).
$$
Insertion of these into (\ref{E1}) gives
\begin{equation}\label{E11}
E^{n+1}_h-E^n_h \leq -\tau I^n+\tau^2 I\!I^n, 
\end{equation}
where 
\begin{align*}
I^n & =- \sum_{\alpha \in \mathcal{A}} \sum_{i=1}^m |K_{\alpha}| \left(\frac{ \rho^{n+1}_{i,\alpha}-\rho^n_{i,\alpha}}{\tau} \right) \psi^*_{i, \alpha},   \\
I\!I^n & = \frac{1}{2k_{B}T} \sum_{\alpha \in \mathcal{A}} |K_{\alpha}| \left(\frac{S^{n+1}_{\alpha}-S^n_{\alpha}}{\tau}\right)\left(\frac{\phi^{n+1}_{\alpha}-\phi^{n}_{\alpha}}{\tau}\right).
\end{align*}
By using (\ref{fully1C}) and summation by parts, we have 
\begin{equation}\label{bd11}
\begin{aligned}
I^n=&  -\sum_{i=1}^m \sum_{\alpha \in \mathcal{A}} \sum_{j=1}^d |K_{\alpha}| \left( \frac{C_{i,\alpha+e_j/2}-C_{i,\alpha-e_j/2}}{h_j}\right ) \psi^*_{i,\alpha}\\
=&  \sum_{i=1}^m\sum_{j=1}^d \sum_{\alpha(j) \neq N_j}  \frac{|K_{\alpha}|}{h_j}  \left(\psi^*_{i,\alpha+e_j}- \psi^*_{i,\alpha}
\right)C_{i,\alpha+e_j/2}.
\end{aligned}
\end{equation}
Note that 
$$
C_{i,\alpha+e_j/2} =\frac{ D_i(x_{ \alpha+e_j/2}) e^{-\psi^n_{i,\alpha+e_j/2}} }{h_j}\left(e^{\psi^*_{i,\alpha+e_j}}- e^{\psi^*_{i,\alpha}}
\right),
$$ 
hence $I^n\geq 0$. 

We pause to discuss the special case with $I^n=0$. In such case we must have 
$\psi^*_{i,\alpha+e_j}=\psi^*_{i,\alpha}$ for each $i, j$ and $\alpha\in \mathcal{A}$, 
which implies $C_{i,\alpha+e_j/2}=0$ for each $i$,  $j$ and $\alpha\in \mathcal{A}$. 
Thus, we have $\rho^{n+1}_{i,\alpha}=\rho^n_{i,\alpha}$,  hence 
$$
S^{n}_{\alpha}=f_{\alpha}+\sum_{i=1}^m q_i\rho_{i,\alpha}^n=f_{\alpha}+\sum_{i=1}^m q_i\rho_{i,\alpha}^{n+1}=S^{n+1}_{\alpha}, \quad \forall \alpha\in \mathcal{A},
$$
therefore $I\!I^n=0$ and $E^{n+1}_h-E^n_h\leq0 $, this is (\ref{En}) with $I^n=0$. 

From now on we only consider  the case $I^n>0$. We proceed to estimate $I\!I^n$,
\begin{equation}\label{bd21}
\begin{aligned}
I\!I^n=&  \frac{1}{2k_{B}T} \sum_{\alpha \in \mathcal{A}} |K_{\alpha}| \left(\frac{S^{n+1}_{\alpha}-S^n_{\alpha}}{\tau}\right) \left( \frac{\phi^{n+1}_{\alpha}-\phi^{n}_{\alpha}}{\tau}\right)\\
=&  -\frac{1}{8\pi k_{B}T} \sum_{\alpha \in \mathcal{A}} \sum_{j=1}^d \frac{|K_{\alpha}|}{\tau^2h_j}(\Phi_{i,\alpha+e_j/2}^{n+1}-\Phi_{i,\alpha-e_j/2}^{n+1}-
\Phi_{i,\alpha+e_j/2}^{n}+\Phi_{i,\alpha-e_j/2}^{n})(\phi^{n+1}_{\alpha}-\phi^{n}_{\alpha})\\
=&  \frac{1}{8\pi k_{B}T}  \sum_{j=1}^d \sum_{\alpha(j)\ne N_j} \frac{|K_{\alpha}| }{\tau^2h_j}(\Phi_{i,\alpha+e_j/2}^{n+1}-\Phi_{i,\alpha+e_j/2}^n) (\phi^{n+1}_{\alpha+e_j}-\phi^{n}_{\alpha+e_j}-\phi^{n+1}_{\alpha}+\phi^{n}_{\alpha})\\
=&  \frac{1}{8\pi k_{B}T}  \sum_{j=1}^d \sum_{\alpha(j)\ne N_j} |K_{\alpha}| \frac{\epsilon_{\alpha+e_j/2}}{\tau^2h^2_j} (\phi^{n+1}_{\alpha+e_j}-\phi^{n}_{\alpha+e_j}-\phi^{n+1}_{\alpha}+\phi^{n}_{\alpha})^2\geq 0.
\end{aligned}
\end{equation}
Here the second  equality is obtained by using the equation (\ref{fully1Ps}), the last equality is obtained by using the definition (\ref{Ps2fully1}) of $\Phi^n_{i,\alpha+e_j/2}$.

From (\ref{bd11}) and (\ref{bd21}), we see that the energy dissipation inequality (\ref{En}) is satisfied if 
\begin{equation}\label{reason}
\tau\leq \tau^* \leq \frac{I^n}{2 I\!I^n}.
\end{equation}
In the remaining of the proof we will quantify $\tau^*$ from estimating the lower bound of $\frac{I^n}{2 I\!I^n}.$

Subtracting (\ref{fully1Ps}) at time level $t=t_{n+1}$ and $t=t_n$, one has
\begin{equation}\label{bd22}
 -\sum_{j=1}^d \frac{\Phi^{n+1}_{\alpha+e_j/2}-\Phi^{n+1}_{\alpha-e_j/2}-\Phi^n_{\alpha+e_j/2}+\Phi^n_{\alpha-e_j/2}}{h_j}=4\pi \sum_{i=1}^{m}q_i(\rho^{n+1}_{i,\alpha}-\rho^n_{i,\alpha}),
\end{equation}
multiplying by $|K_{\alpha}|(\phi^{n+1}_{\alpha}-\phi^n_{\alpha})$ and summing over $\alpha\in \mathcal{A}$ leads to 
\begin{equation}\label{bd23}
\begin{aligned}
& -\sum_{j=1}^d\sum_{\alpha\in \mathcal{A}} \frac{K_{\alpha}}{h_j} (\Phi^{n+1}_{\alpha+e_j/2}-\Phi^{n+1}_{\alpha-e_j/2}-\Phi^n_{\alpha+e_j/2}+\Phi^n_{\alpha-e_j/2})(\phi^{n+1}_{\alpha}-\phi^n_{\alpha})\\
 & =4\pi \sum_{i=1}^{m}\sum_{\alpha\in \mathcal{A}} q_i |K_{\alpha}| (\rho^{n+1}_{i,\alpha}-\rho^n_{i,\alpha})(\phi^{n+1}_{\alpha}-\phi^n_{\alpha}).
 \end{aligned}
\end{equation}
Similar to (\ref{bd21}), the left hand side of (\ref{bd23}) reduces to
\begin{equation}\label{bd24}
LHS=\sum_{j=1}^d \sum_{\alpha(j)\ne N_j} |K_{\alpha}| \frac{\epsilon_{\alpha+e_j/2}}{h^2_j} (\phi^{n+1}_{\alpha+e_j}-\phi^{n}_{\alpha+e_j}-\phi^{n+1}_{\alpha}+\phi^{n}_{\alpha})^2.
\end{equation}
We estimate the right hand side of (\ref{bd23}) by using the equation (\ref{fully1C}):
\begin{equation}\label{bd25}
\begin{aligned}
RHS=& 4\pi \sum_{i=1}^{m}\sum_{\alpha\in \mathcal{A}} q_i|K_{\alpha}| (\rho^{n+1}_{i,\alpha}-\rho^n_{i,\alpha})(\phi^{n+1}_{\alpha}-\phi^n_{\alpha})\\
=&4\pi \tau \sum_{i=1}^{m}\sum_{\alpha\in \mathcal{A}}\sum_{j=1}^d q_i |K_{\alpha}|\frac{1}{h_j}(C_{i,\alpha+e_j/2}-C_{i,\alpha-e_j/2})(\phi^{n+1}_{\alpha}-\phi^n_{\alpha})\\
=& -4\pi \tau \sum_{i=1}^{m} \sum_{j=1}^d \sum_{\alpha(j)\ne N_j} q_i |K_{\alpha}|\frac{1}{h_j} C_{i,\alpha+e_j/2} ( \phi^{n+1}_{\alpha+e_j}-\phi^{n}_{\alpha+e_j}-\phi^{n+1}_{\alpha}+\phi^{n}_{\alpha} ).
\end{aligned}
\end{equation}
Note that
$$
LHS\geq \epsilon_{min}\sum_{j=1}^d \sum_{\alpha(j)\ne N_j}  |K_{\alpha} | \left(\frac{\phi^{n+1}_{\alpha+e_j}-\phi^{n}_{\alpha+e_j}-\phi^{n+1}_{\alpha}+\phi^{n}_{\alpha}}{h_j}\right)^2.
$$
Using the Cauchy-Schwarz inequality we see that
\begin{small}
\begin{equation}\label{bd26}
\begin{aligned}
RHS & \leq  4\pi \tau  \sum_{i=1}^{m} |q_i | \left( \sum_{j=1}^d \sum_{\alpha(j)\ne N_j} |K_{\alpha}| C_{i,\alpha+e_j/2} \left(\frac{ \phi^{n+1}_{\alpha+e_j}-\phi^{n}_{\alpha+e_j}-\phi^{n+1}_{\alpha}+\phi^{n}_{\alpha}}{h_j} \right)\right)\\
& \leq 4\pi \tau  \sum_{i=1}^{m} |q_i |  \left[ \sum_{j=1}^d \sum_{\alpha(j)\ne N_j} |K_{\alpha}|C^2_{i,\alpha+e_j/2} \right]^{1/2} \\
& \qquad  \times \left [  \sum_{j=1}^d \sum_{\alpha(j)\ne N_j} |K_{\alpha} |\left( \frac{\phi^{n+1}_{\alpha+e_j}-\phi^{n}_{\alpha+e_j}-\phi^{n+1}_{\alpha}+\phi^{n}_{\alpha} }{h_j}\right)^2\right]^{1/2}.
\end{aligned}
\end{equation}
\end{small}
We thus obtain
\begin{equation}\label{bd27}
\begin{aligned}
& \sum_{j=1}^d \sum_{\alpha(j)\ne N_j} |K_{\alpha}| \left(\frac{\phi^{n+1}_{\alpha+e_j}-\phi^{n}_{\alpha+e_j}-\phi^{n+1}_{\alpha}+\phi^{n}_{\alpha}}{h_j}\right)^2 \\
&\frac{ \leq 16\pi^2 \tau^2 }{\epsilon^2_{min}} \left[ \sum_{i=1}^{m} |q_i |  \left( \sum_{j=1}^d \sum_{\alpha(j)\ne N_j} |K_{\alpha}| C^2_{i,\alpha+e_j/2} \right) ^{1/2} \right]^2\\
& \leq \frac{16\pi^2 \tau^2 }{\epsilon^2_{min}} \left(\sum_{i=1}^{m} q^2_i  \right)  \sum_{i=1}^{m}    \sum_{j=1}^d \sum_{\alpha(j)\ne N_j}|K_{\alpha}| C^2_{i,\alpha+e_j/2}. \\
\end{aligned}
\end{equation}
Upon insertion into (\ref{bd21})
\begin{equation}\label{bd28}
I\!I^n\leq C  \sum_{i=1}^{m}    \sum_{j=1}^d \sum_{\alpha(j)\ne N_j} |K_{\alpha}| C^2_{i,\alpha+e_j/2},
\end{equation}
where $C= \frac{2 \epsilon_{max} \pi   \sum_{i=1}^m q_i^2}{\epsilon^2_{min} k_{B}T}$. 
We use ({\ref{bd11}}) and (\ref{bd28}) to obtain:
\begin{equation}\label{bd29}
\begin{aligned}
\frac{I^n}{2I\!I^n} \geq & \frac{ \sum_{i=1}^m\sum_{j=1}^d \sum_{\alpha(j) \neq N_j}  \frac{|K_{\alpha}|}{h_j}C_{i,\alpha+e_j/2} (\psi^*_{i,\alpha+e_j}- \psi^*_{i,\alpha})}{ 2C \sum_{i=1}^{m}    \sum_{j=1}^d \sum_{\alpha(j)\ne N_j} |K_{\alpha}|C^2_{i,\alpha+e_j/2}}\\
\geq & \frac{1}{2C}\min_{i,j,\alpha} \left\{ \frac{\psi^*_{i,\alpha+e_j}- \psi^*_{i,\alpha}}{h_j C_{i,\alpha+e_j/2}}\right\}\\
= & \frac{1}{2C}\min_{i,j,\alpha} \left\{ \frac{\psi^*_{i,\alpha+e_j}- \psi^*_{i,\alpha}}{D_{i,\alpha+e_j/2} e^{-\psi^n_{i,\alpha+e_j/2}} (e^{\psi^*_{i,\alpha+e_j}}-e^{\psi^*_{i,\alpha}})} \right\} \quad  \text{ by the mean-value theorem}\\
=&  \frac{1}{2C}\min_{i,j,\alpha}\left\{ \frac{1}{D_{i,\alpha+e_j/2} e^{-\psi^n_{i,\alpha+e_j/2}} e^{(\theta\psi^*_{i,\alpha+e_j}+(1-\theta)\psi^*_{i,\alpha})} }\right\},
\end{aligned}
\end{equation}
where $\theta\in (0, 1).$ By using the harmonic mean for $e^{-\psi^n_{i,\alpha+e_j/2}}$, we have
\begin{align*}
\frac{1}{ e^{-\psi^n_{i,\alpha+e_j/2}} e^{(\theta\psi^*_{i,\alpha+e_j}+(1-\theta)\psi^*_{i,\alpha})} }=& \frac{ e^{((\theta-1)\psi^n_{i,\alpha}-\theta\psi^n_{i,\alpha+e_j}  )}    }{ (\rho^{n+1}_{i,\alpha+e_j} )^{\theta}(\rho^{n+1}_{i,\alpha})^{1-\theta}         }    \cdot \frac{2e^{\psi^n_{i,\alpha+e_j}+\psi^n_{i,\alpha}}}{ e^{\psi^{n}_{i,\alpha+e_j}}+e^{\psi^n_{i,\alpha}}} \\
=&  \frac{1  }{ (\rho^{n+1}_{i,\alpha+e_j} )^{\theta}(\rho^{n+1}_{i,\alpha})^{1-\theta}        }\cdot  \frac{2e^{(1-\theta)\psi^n_{i,\alpha+e_j}+\theta\psi^n_{i,\alpha}}}{ e^{\psi^{n}_{i,\alpha+e_j}}+e^{\psi^n_{i,\alpha}}}      \\
\geq &  \frac{ 2e^{\min\left \{ \psi^n_{i,\alpha+e_j}, \psi^n_{i,\alpha} \right \}} } { 2M    e^{ \max \left \{ \psi^{n}_{i,\alpha+e_j},  {\psi^n_{i,\alpha}} \right \} }  }  \\
=&  \frac{ e^{- | \psi^n_{i,\alpha+e_j}- \psi^n_{i,\alpha} |} } { M     } ,
\end{align*}
where $M=\max_{i,\alpha, n} \rho^{n}_{i,\alpha}$, thus 
\begin{equation}\label{ll}
\frac{I^n}{2 I\!I^n} \geq  \frac{1} {2C D_{max}M } e^{-\max_{i,j,\alpha}|\psi^n_{i,\alpha+e_j} -\psi^n_{i,\alpha}|}.
\end{equation}
For geometric mean or algebraic mean when  used for the evaluation of $e^{-\psi^n_{i,\alpha+e_j/2}}$ we can verify either the same or bigger bound than the right hand side of in (\ref{ll}).
\end{proof}


\end{document}